\numberwithin{equation}{section}
\theoremstyle{plain}
\newtheorem{thm}{Theorem}[section]
\newtheorem{cor}[thm]{Corollary}
\newtheorem{lem}[thm]{Lemma}
\newtheorem{prop}[thm]{Proposition}
\theoremstyle{definition}
\newtheorem{defn}[thm]{Definition}
\newtheorem{rem}[thm]{Remark}
\numberwithin{equation}{section}
\def\beq{\begin{eqnarray}}
	\def\eeq{\end{eqnarray}}
\def\beqa{\begin{eqnarray*}}
	\def\eeqa{\end{eqnarray*}}
\def\beqn{\begin{equation}}
	\def\eeqn{\end{equation}}
\def\mg#1{}
\renewcommand{\epsilon}{\varepsilon}
\renewcommand{\phi}{\varphi}
\renewcommand{\bf}[1]{\textbf{#1}}
\renewcommand{\it}[1]{\textit{#1}}
\renewcommand{\sf}[1]{\textsf{#1}}
\numberwithin{equation}{section}%Code for numbering equations sectionwise
\setlist[enumerate]{font=\upshape,noitemsep, topsep=0pt} % while enumerating the numbering will be in up-shape. Default is italics. Also reduce item seperation space.
\setlist[itemize]{noitemsep, topsep=0pt}
\begin{document}
	
\title[Functional Models]{Functional models for $\Gamma_n$-contractions}
\author{Shubhankar Mandal, Avijit Pal and Bhaskar Paul}
	
\address[S. Mandal]{Department of Mathematics, IIT Bhilai, 6th Lane Road, Jevra, Chhattisgarh 491002}
\email{S. Mandal:shubhankarm@iitbhilai.ac.in}

\address[A. Pal]{Department of Mathematics, IIT Bhilai, 6th Lane Road, Jevra, Chhattisgarh 491002}
\email{A. Pal:avijit@iitbhilai.ac.in}

\address[B. Paul]{Department of Mathematics, IIT Bhilai, 6th Lane Road, Jevra, Chhattisgarh 491002}
\email{B. Paul:bhaskarpaul@iitbhilai.ac.in }

\subjclass[2010]{47A15, 47A20, 47A25, 47A45.}

\keywords{Commutative contractive operator-tuples, Functional model, Unitary
	dilation, Isometric lift, Contractive lift}
	\maketitle
	
\begin{abstract}
		This article develops several functional models for a given $\Gamma_n$-contraction. The first model is motivated by the Douglas functional model for a contraction. We then establish factorization results that  clarify the relationship between a minimal isometric dilation and an arbitrary isometric dilation of a contraction. Using these factorization results, we obtain a Sz.-Nagy-Foias type functional model for a completely non-unitary $\Gamma_n$-contraction, as well as  Sch\"affer type functional model for $\Gamma_n$-contraction.
\end{abstract}
	
\section{Introduction and preliminaries}

For $n \geq 2$, define the symmetrization map 
$\textbf{s} : \mathbb{C}^n \to \mathbb{C}^n$ 
by
\[
\textbf{s}(\textbf{z}) = (s_1(\textbf{z}), \ldots, s_n(\textbf{z})), \quad \textbf{z} = (z_1, \ldots, z_n) \in \mathbb{C}^n,
\]
where, for $1\leq i \leq n,$
\[
s_i(\textbf{z}) = \sum_{1 \leq k_1 < \cdots < k_i \leq n} z_{k_1} \cdots z_{k_i}, 
\quad s_0 = 1.
\]
A closed symmetrized polydisc is defined by $\Gamma_n := \textbf{s}(\bar{\mathbb{D}}^n)$. The map $\textbf{s}$ is a proper holomorphic map (see \cite{Rudin}). Although  $\Gamma_n$ is not convex, it is polynomially convex. The corresponding open symmetrized polydisc is  $\mathbb{G}_n := \textbf{s}(\mathbb{D}^n)$, and its distinguished boundary $b\Gamma_n$ is given by $\textbf{s}(\mathbb{T}^n)$.

Assume that $\Omega$ be a compact set in $\mathbb{C}^n$, and let $\mathcal{O}(\Omega)$ denote the algebra of bounded holomorphic functions in some neighbourhood of $\Omega$ equipped with the sup norm
$$\| f \|_{\infty,\Omega} = \sup_{z \in \Omega} \{|f(z)|:~f\in \mathcal{O}(\Omega)\}.$$ Let $\textbf{T}=(T_1,\dots,T_n)$ be a commuting $n$-tuple of bounded operators on a Hilbert space $\mathcal H,$ and denote its joint spectrum  by  $\sigma(\mathbf T)$. Define a map $$\rho_{\mathbf T}:\mathcal O(\Omega)\rightarrow\mathcal B(\mathcal H)$$ by $$1\to I~{\rm{and}}~z_i\to T_i~{\rm{for}}~1\leq i\leq n. $$ It is immediate that $\rho_{\mathbf T}$ is a homomorphism. A compact set $\Omega \subset \mathbb{C}^n$ is called a \emph{spectral set} for $\textbf{T} $, if the joint spectrum 
$\sigma(\textbf{T}) \subseteq \Omega$ 
and the homomorphism $\rho_{\textbf{T}}: \mathcal O(\Omega) \to \mathcal B(\mathcal{H})$ is contractive. The closed unit disc plays a central role in operator theory: it is a spectral set for every contraction on a Hilbert space $\mathcal H,$  as established by the following theorem (see [Corollary 1.2, \cite{paulsen}]).

\begin{thm} 
Let $T\in \mathcal B(\mathcal H)$ be a contraction. Then
$$\|p(T)\|\leq \|p\|_{\infty, \bar {\mathbb D}}:=\sup\{|p(z)|: |z|\leq1\} $$ for every polynomial $p.$
\end{thm}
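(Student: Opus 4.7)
The plan is to prove this classical von Neumann inequality by lifting the contraction $T$ to a unitary operator on a larger Hilbert space via Sz.-Nagy's dilation theorem, thereby reducing the estimate to a routine application of the spectral theorem for unitary operators combined with the maximum modulus principle. This is the natural route in the context of the present paper, whose entire framework revolves around isometric and unitary dilations.

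The first step is to invoke the Sz.-Nagy unitary dilation theorem: given the contraction $T \in \mathcal{B}(\mathcal{H})$, there exists a Hilbert space $\mathcal{K} \supseteq \mathcal{H}$ and a unitary operator $U \in \mathcal{B}(\mathcal{K})$ such that
\[
T^k \;=\; P_{\mathcal{H}}\, U^k\big|_{\mathcal{H}}, \qquad k = 0, 1, 2, \ldots,
\]
where $P_{\mathcal{H}}$ denotes the orthogonal projection of $\mathcal{K}$ onto $\mathcal{H}$. Taking linear combinations, for any polynomial $p$ we obtain $p(T) = P_{\mathcal{H}}\, p(U)\big|_{\mathcal{H}}$, and in particular $\|p(T)\| \leq \|p(U)\|$.

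The second step is the spectral estimate for $p(U)$. Since $U$ is unitary its spectrum satisfies $\sigma(U) \subseteq \mathbb{T}$, so the continuous functional calculus for the normal operator $U$ gives
\[
\|p(U)\| \;=\; \sup_{\lambda \in \sigma(U)} |p(\lambda)| \;\leq\; \sup_{\lambda \in \mathbb{T}} |p(\lambda)|.
\]
The third step is purely complex-analytic: as $p$ is holomorphic on $\mathbb{D}$ and continuous on $\bar{\mathbb{D}}$, the maximum modulus principle yields $\sup_{|\lambda|=1}|p(\lambda)| = \sup_{|\lambda|\leq 1}|p(\lambda)| = \|p\|_{\infty, \bar{\mathbb{D}}}$. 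Chaining the three inequalities gives the claim.

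The main obstacle is the dilation theorem itself --- constructing $\mathcal{K}$ and $U$ from the defect operators $D_T = (I - T^*T)^{1/2}$ and $D_{T^*}$ is the substantive ingredient --- but this is a completely standard result that the paper will need in its subsequent $\Gamma_n$-setting anyway. As an alternative route that avoids dilation entirely, one may give von Neumann's original argument: reduce to the strict contraction case by considering $rT$ with $r \nearrow 1$, and then use the Herglotz representation of functions with positive real part on $\mathbb{D}$ together with the fact that $\mathrm{Re}\, q(T) \geq 0$ whenever $\mathrm{Re}\, q \geq 0$ on $\bar{\mathbb{D}}$, via a Cayley-transform argument applied to a Möbius factor of $p$.
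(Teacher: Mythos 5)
Your proof is correct and follows exactly the route the paper itself takes: the paper states the Sz.-Nagy unitary dilation theorem immediately after this result and remarks that the spectral theorem for unitary operators together with the existence of a power dilation ensures the validity of the von Neumann inequality, citing Paulsen's book for the details. Your three-step argument (dilation, spectral estimate on $\sigma(U)\subseteq\mathbb{T}$, maximum modulus principle) is precisely this standard approach, carried out correctly.
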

The following theorem presented here is a refined version of the Sz.-Nagy dilation theorem [Theorem 1.1, \cite{paulsen}].
\begin{thm} Let $T\in \mathcal B(\mathcal H)$ be a contraction. Then there exists a larger Hilbert space $\mathcal K$ that contains $\mathcal H$ as a subspace, and a unitary operator $U$ acting on a Hilbert space $\mathcal K \supseteq \mathcal H$ with the property that $\mathcal K$ is the smallest closed reducing subspace for $U$ containing $\mathcal H$ such that
$$P_\mathcal H\,U^n|_{\mathcal H}=T^n, ~{\rm{ for ~all}} ~n\in \mathbb N\cup \{0\}.$$
\end{thm}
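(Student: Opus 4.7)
The plan is to build $U$ in two stages: first construct Sch\"affer's isometric lift $V$ on a one-sided Hilbert space $\mathcal K_+\supseteq\mathcal H$, then extend $V$ to a unitary $\wtilde U$ on a two-sided enlargement $\wtilde{\mathcal K}\supseteq\mathcal K_+$, and finally pass to the minimal closed $\wtilde U$-reducing subspace of $\wtilde{\mathcal K}$ containing $\mathcal H$, which gives the $\mathcal K$ and $U$ in the statement.

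For the isometric step, introduce the defect operator $D_T=(I-T^*T)^{1/2}$ with defect space $\mathcal D_T=\overline{\operatorname{Ran}(D_T)}$. The contractivity of $T$ is packaged in the pointwise identity $\|Th\|^2+\|D_Th\|^2=\|h\|^2$ for $h\in\mathcal H$. Set $\mathcal K_+:=\mathcal H\oplus\bigoplus_{k\geq 1}\mathcal D_T$ and
\begin{equation*}
V(h;d_1,d_2,\ldots):=(Th;\,D_Th,\,d_1,\,d_2,\ldots).
\end{equation*}
The defect identity shows $V$ is an isometry, and a routine induction yields
\begin{equation*}
V^n(h;0,0,\ldots)=(T^nh;\,D_TT^{n-1}h,\,D_TT^{n-2}h,\,\ldots,\,D_Th,\,0,\,0,\ldots),
\end{equation*}
whence $P_{\mathcal H}V^n|_{\mathcal H}=T^n$ for every $n\geq 0$.

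For the unitary step, bring in the conjugate defect data $D_{T^*}=(I-TT^*)^{1/2}$, $\mathcal D_{T^*}=\overline{\operatorname{Ran}(D_{T^*})}$, and the Julia unitary
\begin{equation*}
J(T)=\begin{pmatrix}T&D_{T^*}\\ D_T&-T^*\end{pmatrix}:\mathcal H\oplus\mathcal D_{T^*}\longrightarrow\mathcal H\oplus\mathcal D_T,
\end{equation*}
whose unitarity rests on the intertwining identity $TD_T=D_{T^*}T$, obtained from $T(I-T^*T)=(I-TT^*)T$ by polynomial approximation of the square root. Form $\wtilde{\mathcal K}:=\bigoplus_{k\leq -1}\mathcal D_{T^*}\oplus\mathcal H\oplus\bigoplus_{k\geq 1}\mathcal D_T$ and let $\wtilde U$ act as the unilateral shift on each outer tail while coupling the central positions $-1,0,1$ through $J(T)$. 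A block computation shows $\wtilde U$ is both isometric and surjective, hence unitary on $\wtilde{\mathcal K}$.

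To finish, observe that when $h\in\mathcal H$ is regarded as an element of $\wtilde{\mathcal K}$ with all outer coordinates zero, the iterates $\wtilde U^nh$ for $n\geq 0$ never enter the $\mathcal D_{T^*}$ tail and agree with $V^nh$ in $\mathcal K_+$; consequently $P_{\mathcal H}\wtilde U^n|_{\mathcal H}=T^n$ for $n\geq 0$. Setting
\begin{equation*}
\mathcal K:=\overline{\operatorname{span}}\bigl\{\wtilde U^nh:h\in\mathcal H,\ n\in\mathbb Z\bigr\}
\end{equation*}
and $U:=\wtilde U|_{\mathcal K}$ produces a unitary on a $\wtilde U$-reducing subspace manifestly containing $\mathcal H$; any other closed reducing subspace containing $\mathcal H$ must contain each $\wtilde U^n\mathcal H$, so $\mathcal K$ is the minimal such subspace. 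The main obstacle is the unitarity (not just isometry) of $\wtilde U$: once the intertwining $TD_T=D_{T^*}T$ is in hand the Julia block is unitary, and the remaining work is the careful book-keeping of how that central block glues to the two shift tails. Everything else reduces to direct algebraic verification.
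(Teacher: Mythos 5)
Your proof is correct, and it is essentially the classical Sch\"affer--Julia construction: the paper itself offers no proof of this theorem, citing instead Paulsen's book, where the argument is precisely this one (a Julia-type unitary block glued into a two-sided shift, followed by restriction to the smallest reducing subspace containing $\mathcal H$). Your only cosmetic deviation is using the defect spaces $\mathcal D_T$, $\mathcal D_{T^*}$ in the tails rather than full copies of $\mathcal H$, and factoring through the isometric dilation first; both are standard refinements of the same idea.
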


Schaffer constructed  a unitary dilation for a contraction $T$. The spectral theorem for unitary operators and the existence of power dilation for contractions ensure the validity of the von Neumann inequality. Let $\Omega$ be a compact subset of $\mathbb C ^n. $ Let $F=\left(\!(f_{ij})\!\right)$ be a matrix-valued polynomial defined on $\Omega.$ We say that $\Omega$ is a complete spectral set (complete $\Omega$-contraction) for $\mathbf T$ if $\|F(\mathbf T) \| \leq \|F\|_{\infty, \Omega}$ for every $F\in \mathcal O(\Omega)\otimes \mathcal M_{k\times k}(\mathbb C), k\geq 1$.  The notion, ``property $P$" introduced in \cite{Bagchi} for a particular class of operators and was used to study certain properties of finite-dimensional Banach spaces. In this paper, we say that 
 $\Omega$ has property $P$ if the following holds: $\Omega$ is a spectral set for a commuting $m$-tuple of operators $\mathbf{T}$, if and only if it is a complete spectral set for $\mathbf{T}$.
In either case, property $P$ guarantees the existence of a dilation, which we describe using Arveson's framework.
 
 A commuting $n$-tuples of operators $\mathbf{T}$ for which $\Omega$ is a spectral set is said to admit a $\partial \Omega$ normal  dilation if there exists a Hilbert space $\mathcal K$ containing $\mathcal H$ as a subspace and a commuting $n$-tuples of normal operators $\mathbf{N}=(N_1,\ldots,N_n)~{\rm{ on }}~\mathcal K~{\rm{with}}~\sigma(N)\subseteq \partial \Omega$ such that
$$P_{\mathcal H}F(\mathbf N)|_{\mathcal H}=F(\mathbf T) ~{\rm{for~ all~}} F\in \mathcal O(\Omega).$$
In 1969, Arveson \cite{A} showed that a commuting $n$-tuple of operators $\mathbf{T}$ having $\Omega$ as a spectral set for $\mathbf{T}$ admits a $\partial \Omega$ normal dilation if and only if it satisfies the property $P.$  Subsequently, J. Agler \cite{agler} proved that the annulus has the property $P$. In contrast, M. Dristchell and S. McCullough \cite{michel} showed that no planar domain of connectivity $n \geq 2$ can have property $P.$  In the multi-variable setting, both the symmetrized bi-disc and the bi-disc are known to have  property $P$, as shown by Agler and Young \cite{young} and by Ando \cite{paulsen}, respectively. The first counterexample in higher dimensions was given by Parrott \cite{paulsen} for $\mathbb D^n$ when $n > 2. $ Further negative results were obtained by G. Misra \cite{GM,sastry}, V. Paulsen \cite{vern}, and E. Ricard \cite{pisier}, who showed that a ball in $\mathbb{C}^m$, with respect to some norm $\|\cdot\|_{\Omega}$, cannot have property $P$ when  $m \geq 3$. More recently, it was shown in \cite{cv} that if $B_1$ and $B_2$ are not simultaneously diagonalizable via unitary, then the domain $ \Omega_{\mathbf B}:=   \{(z_1
  ,z_2) :\|z_1 B_1 + z_2 B_2 \|_{\rm op} < 1\}, $ does not have property $P,$ where $\mathbf B=(B_1, B_2)$ in $\mathbb C^2 \otimes \mathcal
  M_2(\mathbb C)$ and $B_1$ and $B_2$ are independent.

 If $\Gamma_n$ is a spectral set, then a commuting $n$-tuple $(S_1, \ldots, S_n)$ is called a $\Gamma_n$-contraction. It is evident from \cite[Proposition 4.1]{19} that if $(S_1,\dots, S_n)$ is a $\Gamma_n$-contraction, then $\|S_i\|\leq \binom{n-1}{i}+ \binom{n-1}{n-i}$ for $i=1,\dots,n-1$ and $\|S_n\|\leq 1$. This gives an estimate for the norm of $S_i$. Since $S_n$ is a contraction, we define the defect operators $$D_{S_n^*}=(I-S_n S_n^*)^{\frac{1}{2}}, \quad \text{and} \quad D_{S_n}=(I-S^*_n S_n)^{\frac{1}{2}}$$ corresponding to the defect spaces $$\mathcal{D} _{S_n^*}=\overline{\text{Ran}}D_{S_n^*}, \quad \text{and} \quad \mathcal{D} _{S_n}=\overline{\text{Ran}}D_{S_n},$$ respectively. The notions of $\Gamma_n$-unitary, $\Gamma_n$-isometry, and pure $\Gamma_n$-isometry are reviewed in \cite{sbs}.
\begin{defn}\label{def:Gamma_n}
	Let $(S_1,\dots,S_n)$ be a commuting $n$-tuple of bounded operators on a Hilbert space $\mathcal{H}$. We say that $(S_1,\dots,S_n)$ is
	\begin{enumerate}
		\item a $\Gamma_n$-\emph{unitary} if the joint spectrum $\sigma(S_1,\dots,S_n)\subseteq b\Gamma_n$ and each $S_i$ is normal on $\mathcal{H}$;
		\item a $\Gamma_n$-\emph{isometry} if there exists a Hilbert space $\mathcal{K}\supseteq \mathcal{H}$ and a $\Gamma_n$-unitary $(\widetilde S_1,\dots,\widetilde S_n)$ on $\mathcal{K}$ such that $\mathcal{H}$ is a common invariant subspace for $\widetilde S_1,\dots,\widetilde S_n$ and $S_i=\widetilde S_i|_{\mathcal{H}}$ for $i=1,\dots,n$;
		\item a $\Gamma_n$-\emph{co-isometry} if $(S_1^*,\dots,S_n^*)$ is a $\Gamma_n$-isometry;
		\item a \emph{pure} $\Gamma_n$-\emph{isometry} if $(S_1,\dots,S_n)$ is a $\Gamma_n$-isometry and $S_n$ is a pure isometry.
	\end{enumerate}
\end{defn}
The following theorem gives us the characterization of the $\Gamma_n$-isometry and $\Gamma_n$-unitary (see \cite{sbs}).
\begin{thm}\label{thm1.4}
	Suppose $S_1,\ldots,S_n$ are commuting operators acting on a Hilbert space $\mathcal H$ and $\gamma_i = \frac{n-i}{n},~ \text{for}~i = 1,2,\ldots,n-1.$
	Then the following are equivalent:
	\begin{enumerate}
		\item $(S_1,\ldots,S_n)$ is a $\Gamma_n$-isometry (respectively unitary);
		\item $(\gamma_1 S_1,\ldots,\gamma_{n-1} S_{n-1})$ is a $\Gamma_{n-1}$-contraction, $S_i = S_{n-i}^* S_n$ for $i=1,\ldots,n-1$, and $S_n$ is an isometry (respectively unitary);
		\item $(S_1,\ldots,S_n)$ is a $\Gamma_n$-contraction and $S_n$ is an isometry (respectively unitary).
	\end{enumerate}
\end{thm}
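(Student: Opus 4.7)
My approach is to prove the cycle $(1) \Rightarrow (3) \Rightarrow (2) \Rightarrow (1)$, treating the isometry and unitary cases in parallel. The algebraic facts underpinning the equivalence are: on the distinguished boundary $b\Gamma_n = \mathbf{s}(\mathbb T^n)$ one has $|s_n|=1$ and $s_i = \overline{s_{n-i}}\,s_n$ for $1 \le i \le n-1$, coming from the identity $z_{k_1}\cdots z_{k_i} = \overline{z_{j_1}\cdots z_{j_{n-i}}}\cdot z_1\cdots z_n$ whenever the $z_l$ are unimodular and $\{k_\ast\}\sqcup\{j_\ast\}=\{1,\dots,n\}$; and the slice $\Gamma_n\cap\{s_n=\lambda\}$ for $|\lambda|=1$, projected onto the first $n-1$ coordinates and rescaled by $\gamma_i=(n-i)/n$, is identifiable with $\Gamma_{n-1}$. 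These two facts will do most of the work.

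For $(1) \Rightarrow (3)$: extend the $\Gamma_n$-isometry $(S_1,\dots,S_n)$ to a $\Gamma_n$-unitary $(\widetilde S_1,\dots,\widetilde S_n)$ on $\mathcal K \supseteq \mathcal H$ with $\mathcal H$ invariant. The joint spectral theorem for commuting normals, combined with polynomial convexity of $\Gamma_n$, gives $\|p(\widetilde{\mathbf S})\| \le \sup_{b\Gamma_n}|p| = \sup_{\Gamma_n}|p|$ for every polynomial $p$, and compression to $\mathcal H$ preserves this bound. Since $s_n$ is unimodular on $b\Gamma_n$, $\widetilde S_n$ is unitary, so its restriction $S_n$ is an isometry (and in the $\Gamma_n$-unitary case $\widetilde{\mathbf S} = \mathbf S$ and $S_n$ is itself unitary).

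For $(3) \Rightarrow (2)$: the central task is to promote the boundary identity $s_i = \overline{s_{n-i}}\,s_n$ to the operator identity $S_i = S_{n-i}^* S_n$. Because $S_n^* S_n = I$, so $S_n^*$ acts as $S_n^{-1}$ on $\operatorname{Ran}(S_n)$, I would pass to the minimal unitary extension $U_n$ of $S_n$ on a space $\mathcal K \supseteq \mathcal H$ and use its spectral resolution to express $S_{n-i}^* S_n$ as a strong-operator limit of polynomials in $\mathbf S$; the scalar von Neumann inequality for $\Gamma_n$, together with the fact that the corresponding polynomial approximants vanish on $b\Gamma_n$ (hence control sup-norms on all of $\Gamma_n$ by polynomial convexity), then forces $S_i - S_{n-i}^* S_n = 0$. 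The $\Gamma_{n-1}$-contractivity of $(\gamma_1 S_1, \ldots, \gamma_{n-1} S_{n-1})$ follows by applying the $\Gamma_n$-contraction hypothesis on the slices $\{s_n=\lambda\}$ and invoking the slicing identification above: any polynomial $q(t_1,\ldots,t_{n-1})$ lifts to a polynomial on $\Gamma_n$ evaluated at a fixed spectral value of $U_n$. I expect the main obstacle to lie in this operator-theoretic transport of the boundary identity and in the careful matching of sup-norms on the slices.

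For $(2) \Rightarrow (1)$: let $\widetilde S_n$ on $\mathcal K \supseteq \mathcal H$ be the minimal unitary extension of the isometry $S_n$, with spectral resolution $\widetilde S_n = \int_{\mathbb T} \lambda \, dE(\lambda)$. I define normal operators $\widetilde S_i = \int_{\mathbb T} f_i(\lambda)\, dE(\lambda)$ with bounded Borel functions $f_i$ chosen so that $(f_1(\lambda), \ldots, f_{n-1}(\lambda), \lambda) \in b\Gamma_n$ $E$-almost everywhere, and so that $\widetilde S_i|_{\mathcal H} = S_i$. The structural identity $S_i = S_{n-i}^* S_n$ together with the $\Gamma_{n-1}$-contractivity of $(\gamma_1 S_1, \ldots, \gamma_{n-1} S_{n-1})$ is exactly what is needed to guarantee such a measurable, pointwise-consistent selection of $f_i$ and to ensure that $(\widetilde S_1, \ldots, \widetilde S_n)$ forms a $\Gamma_n$-unitary extension of $\mathbf S$ with $\mathcal H$ invariant; this makes $\mathbf S$ a $\Gamma_n$-isometry. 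In the $\Gamma_n$-unitary case, $S_n$ is already unitary so $\mathcal K = \mathcal H$ and $\widetilde{\mathbf S} = \mathbf S$.
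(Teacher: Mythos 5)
You should first note that the paper itself does not prove Theorem \ref{thm1.4}; it is quoted from Biswas--Shyam Roy \cite{sbs}, so the relevant comparison is with the argument there and with the tools the paper supplies (notably Theorem \ref{thm1.2}). Your direction $(1)\Rightarrow(3)$ is essentially fine, but your step $(3)\Rightarrow(2)$ has a genuine gap at its central point. You propose to realize $S_{n-i}^*S_n$ as a strong limit of polynomials in $(S_1,\dots,S_n)$ and then invoke the von Neumann inequality over $\Gamma_n$. But that inequality controls only polynomials (holomorphic functions) of the tuple, while the identity $S_i=S_{n-i}^*S_n$ involves adjoints: for a non-normal tuple, $S_{n-i}^*$ is not approximable by polynomials in the $S_j$, and the boundary relation $s_i=\overline{s_{n-i}}\,s_n$ is not a polynomial identity, so ``approximants vanishing on $b\Gamma_n$'' has no meaning inside the functional calculus you are permitted to use. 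The standard route -- available inside this very paper -- is Theorem \ref{thm1.2}: a $\Gamma_n$-contraction satisfies $S_i-S_{n-i}^*S_n=D_{S_n}F_iD_{S_n}$, and $D_{S_n}=0$ when $S_n$ is an isometry, so the identity is immediate (for $n=2$ this is Agler--Young's positivity argument; either way, operator-theoretic input beyond von Neumann is required). Relatedly, your ``slice'' identification is false: for $n=2$ and $|\lambda|=1$, the fiber $\Gamma_2\cap\{s_2=\lambda\}$, projected and rescaled by $\gamma_1$, is a segment such as $[-1,1]$, not $\Gamma_1=\overline{\mathbb{D}}$. The $\Gamma_{n-1}$-contractivity of $(\gamma_1S_1,\dots,\gamma_{n-1}S_{n-1})$ instead follows from the inclusion $(\gamma_1s_1,\dots,\gamma_{n-1}s_{n-1})\in\Gamma_{n-1}$ for every $(s_1,\dots,s_n)\in\Gamma_n$, applied to the polynomial $p(z_1,\dots,z_n)=q(\gamma_1z_1,\dots,\gamma_{n-1}z_{n-1})$; no slicing is needed, and slices over $|\lambda|=1$ could not suffice anyway since $\sigma(S_n)$ fills the disc when $S_n$ is a non-unitary isometry.

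Your step $(2)\Rightarrow(1)$ rests on an ansatz that cannot work. Setting $\widetilde S_i=\int_{\mathbb{T}}f_i(\lambda)\,dE(\lambda)$ places every $\widetilde S_i$ in the abelian von Neumann algebra generated by $\widetilde S_n$, i.e., forces each extension to be a Borel function of $\widetilde S_n$. This fails in trivial examples: take $n=2$, $S_2=I$, and $S_1$ any non-scalar self-adjoint operator with $\|S_1\|\le 2$; then $(S_1,S_2)$ satisfies all hypotheses of $(2)$, the minimal unitary extension of $S_2$ is $S_2=I$ itself, and every Borel function of $I$ is a scalar multiple of $I$, so no $f_1$ restricts to $S_1$. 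The correct construction (Agler--Young for $n=2$, Biswas--Shyam Roy in general) extends each $S_i$, which commutes with the isometry $S_n$, to an operator commuting with the minimal unitary extension $\widetilde S_n$ via the commutant-extension theorem for isometries (Douglas), norm bounds preserved, and then verifies the unitary analogue of the criteria in $(2)$ for the extended tuple. That unitary analogue itself requires proof -- normality of the extended operators and containment of the joint spectrum in $b\Gamma_n$ do not come for free -- and your proposal silently assumes it, both here and in the case where $S_n$ is already unitary (where your construction degenerates to the identity map and proves nothing).
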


\begin{prop}\label{eg1.5}
	Let $(M_{\Phi_1},\dots,M_{\Phi_{n-1}},M_z)$ be a tuple of operators on $H_{\mathcal{E}}^2(\mathbb{D})$, where $\Phi_i(z)=E_i^*+zE_{n-i}$ and $E_i,E_{n-i}$ be operators on $\mathcal{E}$. Then the tuple $(M_{\Phi_1},\dots,M_{\Phi_{n-1}},M_z)$ is commuting if and only if the following condition holds 
	\begin{equation}\label{eq1.1}
		[E_i,E_j]=0~~\text{and}~~[E_i^*,E_{n-j}]=[E_j^*,E_{n-i}],
	\end{equation}
	for every $i,j=1,\dots,n-1$, where $[A,B]=AB-BA$ denotes the commutator. Additionally, it is easy to verify that $M_{\Phi_i}=M_{\Phi_{n-i}}^*M_z$ for all $i=0,\dots,n-1$. We refer to the condition mentioned in \eqref{eq1.1} as the \textit{commutativity condition}.
\end{prop}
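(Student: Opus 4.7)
The plan is to reduce the commutativity of the multiplication-operator tuple to a pointwise identity of operator-valued symbols on the disc and then match the coefficients of the resulting polynomial in $z$.

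First observe that commutativity of each $M_{\Phi_i}$ with $M_z$ is automatic: since $\Phi_i(z)=E_i^*+zE_{n-i}$ has coefficients acting on the fixed coefficient space $\mathcal{E}$, multiplication by $\Phi_i$ and multiplication by the scalar $z$ trivially commute on $H^2_{\mathcal{E}}(\mathbb{D})$. So the non-trivial content is the pairwise commutativity of $M_{\Phi_i}$ and $M_{\Phi_j}$ for $i,j\in\{1,\dots,n-1\}$. I would then use the standard fact that for $\mathcal{B}(\mathcal{E})$-valued polynomials $\Phi,\Psi$, the multiplications $M_{\Phi}$ and $M_{\Psi}$ commute on $H^2_{\mathcal{E}}(\mathbb{D})$ if and only if $\Phi(z)\Psi(z)=\Psi(z)\Phi(z)$ as an identity in $\mathcal{B}(\mathcal{E})$ for every $z\in\mathbb{D}$; one direction is a direct check on monomials $e\otimes z^k$ and the other follows from evaluation.

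Next I would expand
\[
\Phi_i(z)\Phi_j(z)=E_i^*E_j^*+z\bigl(E_i^*E_{n-j}+E_{n-i}E_j^*\bigr)+z^2 E_{n-i}E_{n-j},
\]
and symmetrically for $\Phi_j(z)\Phi_i(z)$, and then match coefficients of $1,z,z^2$. The constant term gives $E_i^*E_j^*=E_j^*E_i^*$, which is equivalent to $[E_i,E_j]=0$ by taking adjoints; the $z^2$ coefficient gives $[E_{n-i},E_{n-j}]=0$, which is the same family of identities as $i,j$ range over $\{1,\dots,n-1\}$. The linear coefficient rearranges to $E_i^*E_{n-j}-E_{n-j}E_i^*=E_j^*E_{n-i}-E_{n-i}E_j^*$, i.e.\ $[E_i^*,E_{n-j}]=[E_j^*,E_{n-i}]$. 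This yields both implications of \eqref{eq1.1}.

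For the additional identity $M_{\Phi_i}=M_{\Phi_{n-i}}^*M_z$, I would work on Fourier coefficients: writing $f=\sum_{k\geq 0}f_k z^k\in H^2_{\mathcal{E}}(\mathbb{D})$, a direct adjoint computation gives $(M_{\Phi_{n-i}}^*f)_k=E_{n-i}f_k+E_i^*f_{k+1}$, while $(M_zf)_k=f_{k-1}$ with $f_{-1}=0$. Composing yields $(M_{\Phi_{n-i}}^*M_zf)_k=E_i^*f_k+E_{n-i}f_{k-1}$, which matches $(M_{\Phi_i}f)_k$ term by term. There is no real obstacle here; the only thing to be mildly careful about is the bookkeeping in the coefficient matching and the fact that the symmetry $i\leftrightarrow n-i$ already makes the constant- and $z^2$-term conditions a single family.
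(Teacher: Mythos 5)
Your proof is correct and complete. The paper itself offers no argument for this proposition (both the commutativity characterization and the identity $M_{\Phi_i}=M_{\Phi_{n-i}}^*M_z$ are left as routine verifications), and your route --- reducing operator commutativity to pointwise commutativity of symbols via $M_{\Phi}M_{\Psi}=M_{\Phi\Psi}$ together with injectivity of the symbol map, matching the coefficients of $1$, $z$, $z^2$, and checking the adjoint identity on Fourier coefficients --- is precisely the standard computation the paper implicitly relies on, including the observation that the constant and $z^2$ conditions collapse into the single family $[E_i,E_j]=0$ under $i\leftrightarrow n-i$. One cosmetic point: the paper's range ``$i=0,\dots,n-1$'' in the final identity is evidently a typo for $i=1,\dots,n-1$, which is the range your verification covers.
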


In operator theory, the solution of operator equations often plays a central and essential role. In this context, Bhattacharyya, Pal, and Roy (see \cite[Lemma 4.1 and Theorem 4.2]{Roy}) established the existence and uniqueness of a solution to an operator equation associated with a $\Gamma$-contraction, which they termed the fundamental equation of the $\Gamma$-contraction.
The following theorem extends this result by establishing the existence and uniqueness of solutions to the fundamental equations associated with a $\Gamma_n$-contraction (see \cite{14}).
\begin{thm}\label{thm1.2}
	\textit{(Existence and Uniqueness)} For $n\geq2$, let $(S_1,\dots, S_n)$ be a $\Gamma_n$-contraction on a Hilbert space $\mathcal{H}$. Then there are unique operators $F_1,\dots, F_{n-1}\in \mathcal{B}(\mathcal{D}_{S_n})$ such that $S_i-S^*_{n-i}S_n=D_{S_n}F_iD_{S_n}$ and $S_{n-i}-S^*_i S_n=D_{S_n}F_{n-i}D_{Sn}$, for $i =1,\dots,n-1$. Moreover, $\omega(F_i+F_{n-i}z)\leq \binom{n-1}{i}+ \binom{n-1}{n-i}$ for all $z\in\mathbb{T}$.
\end{thm}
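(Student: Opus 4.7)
The plan has three components: an immediate density argument for uniqueness; a coupling of the two equations through a unimodular parameter that reduces existence and the numerical-radius bound to a single operator inequality; and, as the main technical obstacle, the derivation of that inequality with the sharp constant from the $\Gamma_n$-contractive hypothesis.

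For uniqueness, if $D_{S_n}FD_{S_n}=0$ with $F\in\mathcal B(\mathcal D_{S_n})$, then $F$ annihilates the dense subspace $\operatorname{Ran}(D_{S_n})\subset\mathcal D_{S_n}$, so $F=0$; applied to the difference of two candidates for $F_i$ (and for $F_{n-i}$) this yields uniqueness of each.

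For existence and the numerical-radius bound, I would treat the two equations together. For $z\in\mathbb T$ put $A(z):=S_i+zS_{n-i}$; since $|z|=1$, one has $zA(z)^*=S_{n-i}^*+zS_i^*$, and therefore
\[
A(z)-zA(z)^*S_n=(S_i-S_{n-i}^*S_n)+z(S_{n-i}-S_i^*S_n).
\]
It suffices, for each $z\in\mathbb T$, to produce a bounded operator $G(z)\in\mathcal B(\mathcal D_{S_n})$ satisfying $D_{S_n}G(z)D_{S_n}=A(z)-zA(z)^*S_n$ together with the uniform estimate $\omega(G(z))\le M$, where $M:=\binom{n-1}{i}+\binom{n-1}{n-i}$. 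Specializing to $z=\pm1$ and taking average and half-difference produces candidates $F_i=\tfrac12(G(1)+G(-1))$ and $F_{n-i}=\tfrac12(G(1)-G(-1))$; uniqueness then forces $G(z)=F_i+zF_{n-i}$ for every $z\in\mathbb T$, delivering both defining equations and the bound $\omega(F_i+zF_{n-i})\le M$ in one stroke. I would produce $G(z)$ by the standard sesquilinear-form recipe on $\operatorname{Ran}(D_{S_n})\times\operatorname{Ran}(D_{S_n})$, extending by polarization, continuity and density; both well-definedness and $\omega(G(z))\le M$ then follow from the single a priori numerical-radius estimate
\[
\babs{\bip{[A(z)-zA(z)^*S_n]h,h}}\le M\,\|D_{S_n}h\|^2,\qquad h\in\mathcal H,\;z\in\mathbb T.
\]

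The main obstacle is this last estimate with the sharp constant $M=\binom{n}{i}$. The crude bound $|\zeta_j|\le\binom{n}{j}$ on $\Gamma_n$ (attained at $\mathbf s(1,\dots,1)$) yields only $\|A(z)\|\le 2\binom{n}{i}$, losing a factor of $2$; the sharp constant reflects the fact that $A(z)-zA(z)^*S_n$ vanishes identically on every $\Gamma_n$-isometry (where $S_i=S_{n-i}^*S_n$ and $D_{S_n}=0$, by Theorem~\ref{thm1.4}), so its true size is governed by the defect $I-S_n^*S_n$. Following the scheme of Bhattacharyya--Pal--Roy in the $n=2$ case, I would apply the $\Gamma_n$-contractive inequality to a family of rescaled test polynomials involving $\zeta_i+z\zeta_{n-i}$ and $\zeta_n$, while exploiting the distinguished-boundary identity $\bar\zeta_{n-i}\zeta_n=\zeta_i$ on $b\Gamma_n$; taking real parts and invoking Pascal's identity $\binom{n-1}{i}+\binom{n-1}{n-i}=\binom{n}{i}$ should isolate the sharp constant. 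The most delicate step is making the factor $\|D_{S_n}h\|^2$ emerge on the right with no parasitic constant, which I expect to handle by pairing the scalar polynomial identity on $\Gamma_n$ with the operator identity $D_{S_n}^2=I-S_n^*S_n$ at just the right moment.
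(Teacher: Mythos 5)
A preliminary remark on the comparison itself: the paper never proves this theorem --- it is imported from \cite{14}, so there is no internal proof to measure your attempt against. Judged against the route that source (generalizing Bhattacharyya--Pal--Roy \cite{Roy}) must take, your surrounding architecture is correct and is the standard one: uniqueness via density of $\operatorname{Ran}D_{S_n}$ in $\mathcal{D}_{S_n}$; the coupling $\Sigma(z)=(S_i-S_{n-i}^*S_n)+z(S_{n-i}-S_i^*S_n)$; recovery of $F_i,F_{n-i}$ from $G(\pm1)$ plus uniqueness to get $G(z)=F_i+zF_{n-i}$; and the polarization/density construction of $G(z)$ from the single a priori bound.

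The genuine gap is that the a priori bound
\begin{equation*}
\bigl|\bigl\langle\Sigma(z)h,h\bigr\rangle\bigr|\le\binom{n}{i}\,\|D_{S_n}h\|^{2}\qquad(z\in\mathbb{T},\ h\in\mathcal{H})
\end{equation*}
\emph{is} the theorem, and the plan you give for it cannot succeed as described. Contractivity of the $\mathcal{O}(\Gamma_n)$-functional calculus applied to polynomials in $s_i,s_{n-i},s_n$ --- rescaled or not --- produces inequalities whose right-hand sides are multiples of $I$; and since $\Gamma_n$ for $n\ge 3$ is not known to be a complete spectral set, there is no matrix-polynomial device available either. So no amount of polynomial testing plus ``taking real parts'' can make the defect $I-S_n^*S_n$ appear on the right. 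The inequality has to come from testing scalar \emph{rational} functions whose sup-norm bound on $\Gamma_n$ already encodes the sharp constant, and your sketch neither identifies those functions nor establishes that bound.

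Concretely, the missing ingredient is the scalar inclusion: for every $z\in\mathbb{T}$ the polynomial map $\tau_z(\mathbf{s})=\bigl(\tfrac{s_i+zs_{n-i}}{\binom{n}{i}},\,zs_n\bigr)$ carries $\Gamma_n$ into $\Gamma_2$; equivalently, on $\Gamma_n$,
\begin{equation*}
\bigl|s_i-\bar{s}_{n-i}s_n\bigr|+\bigl|s_{n-i}-\bar{s}_is_n\bigr|\le\binom{n}{i}\bigl(1-|s_n|^{2}\bigr).
\end{equation*}
This is elementary once one writes, for $(s_1,\dots,s_n)=\mathbf{s}(z_1,\dots,z_n)$ with $|z_j|\le1$,
\begin{equation*}
s_i-\bar{s}_{n-i}s_n=\sum_{|B|=i}\Bigl(\prod_{j\in B}z_j\Bigr)\Bigl(1-\prod_{j\notin B}|z_j|^{2}\Bigr),
\end{equation*}
together with its companion for $s_{n-i}-\bar{s}_is_n$: setting $u_B=\prod_{j\in B}|z_j|$ and $v_B=\prod_{j\notin B}|z_j|$, the two sums combine per subset into $(u_B+v_B)(1-u_Bv_B)\le(1+u_Bv_B)(1-u_Bv_B)=1-|s_n|^{2}$, using only $(1-u_B)(1-v_B)\ge0$, and summing over the $\binom{n}{i}$ subsets produces the constant --- this, not the crude bound $|s_j|\le\binom{n}{j}$, is where your Pascal identity actually lives, and your test configuration $z_1=\dots=z_{n-1}=1$, $z_n=r$ shows it is sharp. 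With the inclusion in hand, composition of polynomials shows that $\bigl(\tfrac{S_i+zS_{n-i}}{\binom{n}{i}},\,zS_n\bigr)$ is a $\Gamma_2$-contraction for each $z\in\mathbb{T}$; since $D_{zS_n}=D_{S_n}$, the existence theorem of \cite{Roy} for $\Gamma_2$-contractions hands you precisely your $G(z)$ with $\omega(G(z))\le\binom{n}{i}$, and your $z=\pm1$ averaging and uniqueness step then finish the proof. In short: your skeleton is right, but the load-bearing estimate is absent, and the mechanism you propose for it (polynomial von Neumann testing) is structurally incapable of producing it.
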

The next result gives the different
characterization of the fundamental operators $E_i$'s of a $\Gamma_n$-contraction $(S_1,\dots, S_n)$
which is described in \cite[Proposition 2.5]{14}.
\begin{thm}\label{thm1.3}
	For $i=1,\dots,n-1$, the fundamental operators $E_i$ of a $\Gamma_n$-contraction $(S_1,\dots, S_n)$ is the unique bounded linear operator on $\mathcal D_{S_n}$ that satisfies the following operator
	equation
	$$D_{S_n} S_i = F_i D_{S_n}+ F^*_{n-i} D_{S_n} S_n.$$
\end{thm}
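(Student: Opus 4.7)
The plan is to derive the displayed equation by combining the two fundamental equations of Theorem~\ref{thm1.2} (for the indices $i$ and $n-i$), and then to obtain uniqueness from the fact that $D_{S_n}$ is injective when restricted to $\mathcal{D}_{S_n}$.

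For existence, I would start from the two fundamental identities
\[
S_i-S_{n-i}^*S_n=D_{S_n}F_iD_{S_n},\qquad S_{n-i}-S_i^*S_n=D_{S_n}F_{n-i}D_{S_n},
\]
and first take the adjoint of the second to get $S_{n-i}^*=S_n^*S_i+D_{S_n}F_{n-i}^*D_{S_n}$. Multiplying this on the right by $S_n$, using the commutativity relation $S_nS_i=S_iS_n$, and substituting $S_n^*S_n=I-D_{S_n}^2$, I can rewrite $S_{n-i}^*S_n$ as $S_i-D_{S_n}^2S_i+D_{S_n}F_{n-i}^*D_{S_n}S_n$. Plugging this back into the first equation and cancelling the $S_i$ terms produces
\[
D_{S_n}^2S_i=D_{S_n}F_iD_{S_n}+D_{S_n}F_{n-i}^*D_{S_n}S_n,
\]
equivalently $D_{S_n}\bigl[D_{S_n}S_i-F_iD_{S_n}-F_{n-i}^*D_{S_n}S_n\bigr]=0$. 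Each summand in the bracket has range in $\mathcal{D}_{S_n}=\overline{\mathrm{Ran}}\,D_{S_n}$, while $\ker D_{S_n}=\mathcal{D}_{S_n}^\perp$, so $D_{S_n}$ is injective on $\mathcal{D}_{S_n}$. Hence the bracket itself vanishes, which is the desired equation.

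For uniqueness, suppose $\widetilde F_i$ is another bounded operator on $\mathcal{D}_{S_n}$ satisfying the same equation with the same $F_{n-i}^*$. Subtracting the two versions of the identity yields $(F_i-\widetilde F_i)D_{S_n}=0$, so $F_i-\widetilde F_i$ vanishes on $\mathrm{Ran}\,D_{S_n}$, and by boundedness on its closure $\mathcal{D}_{S_n}$. Therefore $F_i=\widetilde F_i$ as operators on $\mathcal{D}_{S_n}$.

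The only delicate step is the algebraic rearrangement that rewrites $S_{n-i}^*S_n$ using the adjoint of the $(n-i)$-th fundamental equation together with the commutativity $S_nS_i=S_iS_n$; once the bracketed form above is produced, injectivity of $D_{S_n}$ on $\mathcal{D}_{S_n}$ disposes of both existence and uniqueness. No further obstacle is expected beyond these routine identifications.
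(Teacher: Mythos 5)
Your existence argument is correct, and it is essentially the standard derivation: the paper itself states Theorem~\ref{thm1.3} as a quotation of \cite[Proposition 2.5]{14} and contains no proof, but the argument in that source (modelled on the $\Gamma_2$ and tetrablock antecedents of Bhattacharyya--Pal--Roy and Bhattacharyya) runs exactly as yours does, arriving at
\[
D_{S_n}\bigl[D_{S_n}S_i-F_iD_{S_n}-F_{n-i}^*D_{S_n}S_n\bigr]=0
\]
and then killing the outer factor $D_{S_n}$ because the bracket maps $\mathcal{H}$ into $\mathcal{D}_{S_n}$ while the kernel of $D_{S_n}$ is $\mathcal{D}_{S_n}^{\perp}$. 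No objection to that half.

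The uniqueness half has a genuine gap. You hold $F_{n-i}^*$ fixed and vary only $F_i$; that proves a conditional statement (uniqueness of $F_i$ \emph{given} the other fundamental operator), whereas the theorem's content is that the tuple $(F_1,\dots,F_{n-1})$ is the unique solution of the coupled system with all $n-1$ operators unknown. Your reading collapses precisely in the self-coupled case $i=n-i$ (that is, $n$ even and $i=n/2$ --- in particular the classical case $n=2$, where the equation reads $D_{S_2}S_1=F_1D_{S_2}+F_1^*D_{S_2}S_2$): there $F_{n-i}$ \emph{is} the unknown, ``the same $F_{n-i}^*$'' is not available, and subtracting two solutions leaves
\[
(F_i-\widetilde F_i)D_{S_n}+(F_i-\widetilde F_i)^*D_{S_n}S_n=0,
\]
from which injectivity of $D_{S_n}$ on $\mathcal{D}_{S_n}$ yields nothing. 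Moreover, the conditional version would not even support the use this paper makes of the theorem: in the proof of Theorem~\ref{thm3.2}, one deduces that a tuple $(\tilde E_1,\dots,\tilde E_{n-1})$ satisfying the system must equal $(E_1,\dots,E_{n-1})$, with \emph{all} entries unknown simultaneously. The missing idea is the limiting argument standard in this circle of results: set $Z_i=F_i-\widetilde F_i$, use the equations for the indices $i$ and $n-i$ together to obtain
\[
\langle Z_iD_{S_n}h,\,D_{S_n}g\rangle=\langle Z_iD_{S_n}S_nh,\,D_{S_n}S_ng\rangle=\cdots=\langle Z_iD_{S_n}S_n^{k}h,\,D_{S_n}S_n^{k}g\rangle,\qquad h,g\in\mathcal{H},
\]
and let $k\to\infty$, noting $\|D_{S_n}S_n^{k}h\|^2=\|S_n^{k}h\|^2-\|S_n^{k+1}h\|^2\to 0$ because the sequence $\|S_n^{k}h\|$ is decreasing. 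Since $Z_iD_{S_n}h$ lies in $\mathcal{D}_{S_n}$, this forces $Z_i=0$ on $\operatorname{Ran}D_{S_n}$, hence on $\mathcal{D}_{S_n}$. Without this argument the uniqueness assertion is not established, not even for $n=2$.
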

 Assume that $T\in \mathcal{B}(\mathcal{H})$ is a contraction and $V\in \mathcal{B}(\mathcal{K})$ is an isometry. Recall that if $V$ is an isometric dilation of $T$, then there is an isometry $\mathscr{E}:\mathcal{H}\rightarrow\mathcal{K}$ such that
\begin{equation}\label{iso-dil}
	\mathscr{E}T^*=V^*\mathscr{E}.
\end{equation}
Conversely, if there is an isometry $\mathscr{E}:\mathcal{H}\rightarrow\mathcal{K}$
that satisfies equation \eqref{iso-dil} then $V$ is an isometric dilation of $T$. Moreover, the isometric dilation is minimal if $$\mathcal{K}=\overline{\operatorname{Span}}\{V^m(\mathscr{E}\mathcal{H}):~m\geq0\}.$$  
Although explicit constructions of the minimal isometric dilation of a single contraction are well established-most notably the Schaffer construction (see \cite{16}), the Douglas construction (see \cite{10}), and the Sz.-Nagy minimal dilation for completely non-unitary contractions (see \cite{Nagy}). 
In this article, we focus on the explicit construction of minimal isometric dilations for $\Gamma_n$-contractions and subsequently develop the associated functional models.

We denote the unit circle by $\mathbb T.$  Let $\mathcal E$  be a separable Hilbert space, and  $\mathcal B(\mathcal E)$ denotes the space of bounded linear operators on $\mathcal E$ equipped with the operator norm. Let $H^2_{\mathcal E}(\mathbb{D})$ denote  the  Hardy space of analytic $\mathcal E$-valued functions defined on the unit disk  $\mathbb D$. Let $ L^2_{\mathcal E}(\mathbb{T})$ represent the Hilbert space of square-integrable $\mathcal E$-valued functions on the unit circle $\mathbb T,$ equipped with the natural inner product. The space $H^{\infty}(\mathcal B(\mathcal E))$ consists of bounded analytic $\mathcal B(\mathcal E)$-valued functions defined on $\mathbb D$. Let $L^{\infty}(\mathcal B(\mathcal E))$ denote the space of bounded measurable $\mathcal B(\mathcal E)$-valued functions on $\mathbb T$.  For $\phi \in L^{\infty}(\mathcal B(\mathcal E)),$ the Toeplitz operator associated with the symbol  $\phi$ is denoted by $T_{\phi}$ and is defined as follows: 
 $$T_{\phi}f=P_{+}(\phi f), \quad f \in H^2_{\mathcal E}(\mathbb{D}),$$ where $P_{+} : L^2_{\mathcal E}(\mathbb{D}) \to H^2_{\mathcal E}(\mathbb{D})$ is the orthogonal projecton.  In particular, $T_z$ is the
 unilateral shift operator $M_z$ on $H^2_{\mathcal E}(\mathbb{D})$  and $T_{\bar{z}}$ is the backward shift $M_z^*$ on $H^2_{\mathcal E}(\mathbb{D})$. Note that the vector valued Hardy space $H^2_{\mathcal{E}}(\mathbb{D})$ can be identified as $H^2(\mathbb{D})\otimes \mathcal{E}$ via the unitary operator $U:H^2_{\mathcal{E}}(\mathbb{D})\rightarrow H^2(\mathbb{D})\otimes \mathcal{E}$ defined by $$z^m e\mapsto z^m \otimes e,\quad \text{for}~e\in \mathcal{E},~m \in \{0\}\cup \mathbb{N}.$$

% For a  $\Gamma_n$-contraction $(S_1^*,\dots, S_n^*)$, Theorem \ref{thm1.2} implies that there exists a unique solution $(X_1,\dots,X_{n-1})=(E_1,\dots,E_{n-1})$ to the system of equations
%\begin{equation*}
%	S_i^*-S_{n-i}S_n^*=D_{S_n^*}E_iD_{S_n^*}~\text{and}~S_{n-i}^*-S_i S_n^*=D_{S_n^*}E_{n-i}D_{S_n^*},
%\end{equation*}
%for $i =1,\dots,n-1$. Equivalently, by Theorem \ref{thm1.3}, the fundamental operators $(E_1, \dots, E_{n-1})$ provide the unique solution to the alternative system:
%\begin{equation}\label{eq3.20}
%	D_{S_n^*} S_i^* = X_i D_{S_n^*}+ X^*_{n-i} D_{S_n^*} S_n^*,
%\end{equation}
%for $i =1,\dots,n-1$.

The paper is organized as follows: In Section 2, using the von Neumann-Wold decomposition for a $\Gamma_n$-isometry, we constructed the canonical $\Gamma_n$-unitary associated with a $\Gamma_n$-contraction. This article then develops several functional models for $\Gamma_n$-contraction.  Motivated by the Douglas functional model for a single contraction, we construct  functional models for $\Gamma_n$-contraction  in Section 3.
In Section 4, we establish factorization results that clarify the relationship between a minimal isometric dilation and an arbitrary isometric dilation of a given contraction. Finally, building on these factorization results, Section 5 develops a Sz.-Nagy-Foias type functional model for completely non-unitary $\Gamma_n$-contractions, as well as a Schaffer-type functional model for $\Gamma_n$ -contractions.

\section{Canonical Construction of $\Gamma_n$-Unitary from $\Gamma_n$-Contraction}
We begin this section by establishing several characterizations of $\Gamma_n$-contraction. These results are stated in the form of lemmas, as they will play a crucial role in the development of a Douglas type functional model for $\Gamma_n$-contraction.
\begin{lem}\label{lem2.1}
	Let $(M_1, \dots, M_n)$ be a $\Gamma_n$-contraction. Then the following assertions hold:
	
	\begin{enumerate}
		\item Let $(M_1, \dots, M_n)$ be a tuple of operator acting on the Hilbert space $\mathcal{K}$, and let $(B_1, \dots, B_n)$ be a tuple of operator acting on the Hilbert space $\mathcal{H}$. If there exists a unitary operator $U : \mathcal{K} \to \mathcal{H}$ such that
		$$B_i = U M_i U^* \quad \text{for each } i = 1, \dots, n,$$
		then $(B_1, \dots, B_n)$ is also a $\Gamma_n$-contraction.
		
		\item Let $\mathcal{H}$ and $\mathcal{K}$ be Hilbert spaces, and for each $i = 1, \dots, n$, let
		$$M_i = 
		\begin{bmatrix}
			A_i & 0 \\
			0 & B_i
		\end{bmatrix}
		\quad \text{acting on} \quad \mathcal{H} \oplus \mathcal{K},$$
		where $A_i \in \mathcal{B}(\mathcal{H})$ and $B_i \in \mathcal{B}(\mathcal{K})$. Then the tuple $(A_1, \dots, A_n)$, and $(B_1, \dots, B_n)$ are $\Gamma_n$-contractions.
	\end{enumerate}
\end{lem}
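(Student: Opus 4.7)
Both claims are essentially tautological from the definition of a $\Gamma_n$-contraction, which requires verifying (i) that the joint Taylor spectrum of the tuple lies in $\Gamma_n$, and (ii) that the induced homomorphism $\rho: \mathcal{O}(\Gamma_n)\to \mathcal{B}(\cdot)$ is contractive. The plan is to check both conditions directly in each setting, using only the following standard facts about the joint Taylor spectrum: it is invariant under simultaneous unitary conjugation, and for a block-diagonal commuting tuple it equals the union of the joint spectra of the diagonal blocks.

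For part (1), the first step is to observe that for every polynomial $p\in \mathbb{C}[z_1,\ldots,z_n]$,
\[
p(B_1,\ldots,B_n)=U\,p(M_1,\ldots,M_n)\,U^{*},
\]
and therefore $\|p(\mathbf{B})\|=\|p(\mathbf{M})\|$ since $U$ is unitary. The same identity, and hence the equality of norms, propagates from polynomials to every $f\in \mathcal{O}(\Gamma_n)$ by continuity of the homomorphism $\rho_{\mathbf{M}}$. Combined with $\sigma(\mathbf{B})=\sigma(\mathbf{M})\subseteq \Gamma_n$, this gives that $\Gamma_n$ is a spectral set for $(B_1,\ldots,B_n)$.

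For part (2), the key observation is that the block-diagonal form is preserved under polynomial substitutions: for every polynomial $p$,
\[
p(M_1,\ldots,M_n)=\begin{bmatrix} p(A_1,\ldots,A_n) & 0 \\ 0 & p(B_1,\ldots,B_n)\end{bmatrix}.
\]
The operator norm of a block-diagonal operator equals the maximum of the norms of its blocks, whence
\[
\max\bigl(\|p(\mathbf{A})\|,\|p(\mathbf{B})\|\bigr)=\|p(\mathbf{M})\|\le \|p\|_{\infty,\Gamma_n}.
\]
Thus the contractivity of $\rho_{\mathbf{M}}$ on polynomials descends to contractivity of both $\rho_{\mathbf{A}}$ and $\rho_{\mathbf{B}}$, and then extends to all of $\mathcal{O}(\Gamma_n)$ by continuity. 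The spectral condition follows from $\sigma(\mathbf{A})\cup\sigma(\mathbf{B})=\sigma(\mathbf{M})\subseteq \Gamma_n$. Hence $(A_1,\ldots,A_n)$ and $(B_1,\ldots,B_n)$ are both $\Gamma_n$-contractions.

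There is no serious obstacle here; the only bookkeeping is the passage from matrix-polynomial inequalities to inequalities over $\mathcal{O}(\Gamma_n)$, which is immediate from the continuity of the polynomial functional calculus together with polynomial convexity of $\Gamma_n$. Both statements will be used repeatedly in later sections to pass between concrete models and abstract $\Gamma_n$-contractions.
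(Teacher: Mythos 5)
Your proposal is correct and follows essentially the same route as the paper: both proofs rest on the identities $p(\mathbf{B})=U\,p(\mathbf{M})\,U^{*}$ and $p(\mathbf{M})=p(\mathbf{A})\oplus p(\mathbf{B})$, together with the fact that unitary conjugation preserves norms and that the norm of a block-diagonal operator is the maximum of the norms of its blocks, to transfer the von Neumann inequality over $\Gamma_n$. The only difference is that you also verify the joint-spectrum inclusion (via unitary invariance of the Taylor spectrum and $\sigma(\mathbf{M})=\sigma(\mathbf{A})\cup\sigma(\mathbf{B})$ for block-diagonal tuples), a part of the definition of spectral set that the paper's proof leaves implicit, so your write-up is if anything slightly more complete.
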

\begin{proof}
	Let $F$ be a $n$-variable polynomial. Also, if $(M_1,\dots M_n)$ is unitary equivalent to $(B_1,\dots,B_n)$ via unitary operator $U:\mathcal{K}\rightarrow \mathcal{H}$ then using hypothesis, we have 
	\begin{align*}
		\|F(B_1,\dots,B_n)\|_{\mathcal{H}}&=\|F(UM_1U^*,\dots,UM_nU^*)\|_{\mathcal{H}}\\
		&=\|UF(M_1,\dots,M_n)U^*\|_{\mathcal{H}}\\&\leq \|F(M_1,\dots M_n)\|_{\mathcal{K}}\\
		&\leq \sup_{z \in \Gamma_n}|F(z)|.
	\end{align*}
	This completes the proof of assertion $(1)$. Moreover, by hypothesis
	\begin{align*}
		\|F(A_1,\dots,A_n)\|_{\mathcal{H}}&=\|F(M_1|_{\mathcal{H}},\dots,M_n|_{\mathcal{H}})\|_{\mathcal{H}}\\&\leq \|F(M_1,\dots M_n)\|_{\mathcal{H}\oplus\mathcal{K}}\\
		&\leq \sup_{z \in \Gamma_n}|F(z)|.
	\end{align*}
	Similar argument holds for the $\Gamma_n$-contraction $(B_1, \dots, B_n)$. This completes the proof of assertion $(2)$. 
\end{proof}
\begin{lem}\label{lem2.2}
	Let $\mathcal{H}$ and $\mathcal{K}$ be Hilbert spaces, and for each $i = 1, \dots, n$, let
	$$M_i = 
	\begin{bmatrix}
		A_i & 0 \\
		0 & B_i
	\end{bmatrix}
	\quad \text{on } \mathcal{H} \oplus \mathcal{K},
	$$
	where $A_i \in \mathcal{B}(\mathcal{H})$ and $B_i \in \mathcal{B}(\mathcal{K})$. Suppose that the tuples $(A_1, \dots, A_n)$ and $(B_1, \dots, B_n)$ are $\Gamma_n$-contractions (respectively, $\Gamma_n$-isometries or $\Gamma_n$-unitaries). Then the tuple $(M_1, \dots, M_n)$ is a $\Gamma_n$-contraction (respectively, $\Gamma_n$-isometry or $\Gamma_n$-unitary).
\end{lem}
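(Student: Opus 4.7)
The plan is to reduce each case to a block-diagonal observation together with Theorem \ref{thm1.4}.

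\textbf{Step 1 (commutativity of the $M_i$'s).} First I would note that since $(A_1,\dots,A_n)$ and $(B_1,\dots,B_n)$ are $\Gamma_n$-contractions, they are, by definition, commuting tuples. A direct block-matrix computation then gives
\[
M_iM_j=\begin{bmatrix}A_iA_j&0\\0&B_iB_j\end{bmatrix}=\begin{bmatrix}A_jA_i&0\\0&B_jB_i\end{bmatrix}=M_jM_i,
\]
so $(M_1,\dots,M_n)$ is a commuting tuple, and polynomial functional calculus is well-defined.

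\textbf{Step 2 (contraction case).} For any polynomial $F$ in $n$ commuting variables,
\[
F(M_1,\dots,M_n)=\begin{bmatrix}F(A_1,\dots,A_n)&0\\0&F(B_1,\dots,B_n)\end{bmatrix}.
\]
Since the operator norm of a block diagonal operator is the maximum of the norms of its diagonal blocks, and since $(A_i)$, $(B_i)$ are $\Gamma_n$-contractions,
\[
\|F(M_1,\dots,M_n)\|=\max\{\|F(A_1,\dots,A_n)\|,\|F(B_1,\dots,B_n)\|\}\leq\sup_{z\in\Gamma_n}|F(z)|.
\]
Because $\Gamma_n$ is polynomially convex, this polynomial bound is equivalent to the contractivity of $\rho_{\mathbf M}$ on $\mathcal O(\Gamma_n)$, so $(M_1,\dots,M_n)$ is a $\Gamma_n$-contraction.

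\textbf{Step 3 (isometry/unitary cases).} Here I would invoke Theorem \ref{thm1.4}, specifically the equivalence $(1)\Leftrightarrow(3)$: $(S_1,\dots,S_n)$ is a $\Gamma_n$-isometry (resp.\ $\Gamma_n$-unitary) if and only if it is a $\Gamma_n$-contraction and $S_n$ is an isometry (resp.\ unitary). Under the hypothesis that $(A_i)$ and $(B_i)$ are $\Gamma_n$-isometries (resp.\ $\Gamma_n$-unitaries), $A_n$ and $B_n$ are isometries (resp.\ unitaries), and therefore the orthogonal direct sum
\[
M_n=\begin{bmatrix}A_n&0\\0&B_n\end{bmatrix}
\]
is itself an isometry (resp.\ unitary) on $\mathcal H\oplus\mathcal K$. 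Combined with Step 2, which already shows that $(M_1,\dots,M_n)$ is a $\Gamma_n$-contraction, another application of Theorem \ref{thm1.4} yields the $\Gamma_n$-isometry (resp.\ $\Gamma_n$-unitary) conclusion.

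\textbf{Main obstacle.} There is no serious obstacle: the entire argument is the block-diagonal functional-calculus identity plus Theorem \ref{thm1.4}. The only small point one must remember is that the contractivity condition in the definition of a $\Gamma_n$-contraction is phrased on $\mathcal O(\Gamma_n)$, but this is handled by the polynomial convexity of $\Gamma_n$ (noted in the introduction), which reduces testing to polynomials.
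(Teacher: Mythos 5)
Your proposal is correct and follows essentially the same route as the paper: the block-diagonal functional-calculus identity giving $\|F(M_1,\dots,M_n)\|=\max\{\|F(A_1,\dots,A_n)\|,\|F(B_1,\dots,B_n)\|\}\leq\sup_{z\in\Gamma_n}|F(z)|$ for the contraction case, followed by Theorem \ref{thm1.4} together with the observation that $M_n=A_n\oplus B_n$ is an isometry (resp.\ unitary) for the remaining cases. Your explicit verification of commutativity and the remark on polynomial convexity of $\Gamma_n$ are small refinements the paper leaves implicit, but they do not change the argument.
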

\begin{proof}
	Let $F$ be a $n$-variable polynomial. Since, the tuple $(A_1, \dots, A_n)$ acting on $\mathcal{H}$ and $(B_1, \dots, B_n)$ acting on $\mathcal{K}$ are $\Gamma_n$-contractions. It follows that, 
	\begin{equation*}\label{eq2.2}
		\|F(A_1, \dots, A_n)\|_{\mathcal{H}}\leq \sup_{z \in \Gamma_n}|F(z)|~\text{and}~\|F(B_1, \dots, B_n)\|_{\mathcal{K}}\leq \sup_{z \in \Gamma_n}|F(z)|.
	\end{equation*}  Now, using the above equation, we have, 
	\begin{align*}
		\|F(M_1,\dots,M_n)\|_{\mathcal{H}\oplus \mathcal{K}}&=\left\|\begin{bmatrix}
			F(A_1, \dots, A_n) & 0\\
			0 & F(B_1, \dots, B_n)
		\end{bmatrix}\right\|_{\mathcal{H} \oplus \mathcal{K}}\\&= \max\{\|F(A_1, \dots, A_n)\|_{\mathcal{H}},\|F(B_1, \dots, B_n)\|_{\mathcal{K}}\} \\
		&\leq \sup_{z \in \Gamma_n}|F(z)|.
	\end{align*}
	It proves that the tuple $(M_1,\dots,M_n)$ is a $\Gamma_n$-contraction. Furthermore, if the tuples $(A_1, \dots, A_n)$ and $(B_1, \dots, B_n)$ are $\Gamma_n$-isometries (respectively, $\Gamma_n$-unitaries), then both $(A_1, \dots, A_n)$ and $(B_1, \dots, B_n)$ are $\Gamma_n$-contractions. Consequently, $(M_1,\dots,M_n)$ is also a $\Gamma_n$-contraction. Since $A_n$ and $B_n$ are isometries (respectively, unitaries), it follows that $M_n$ is an isometry (respectively, unitary). Hence, by Theorem \ref{thm1.4}, the tuple $(M_1, \dots, M_n)$ is a $\Gamma_n$-isometry (respectively,  $\Gamma_n$-unitary). This completes the proof.
\end{proof}
The von Neumann-Wold decomposition asserts that every isometry decomposes into a direct sum of a pure isometry (a shift of appropriate multiplicity) and a unitary operator. The following result not only provides a model for an arbitrary $\Gamma_n$-isometry but also generalises the von Neumann-Wold decomposition to the setting of $\Gamma_n$-isometry.
\begin{thm}\label{thm2.3}
	Let $(V_1,\ldots,V_n)$ be the tuple of operator acting on a Hilbert space $\mathcal H$. Then, the tuple $(V_1,\ldots,V_n)$ is a $\Gamma_n$-isometry if and only if the tuple $(\gamma_1V_1,\dots,\gamma_{n-1}V_{n-1})$ is a $\Gamma_{n-1}$-contraction, there exist Hilbert spaces $\mathcal{E}$ and $\mathcal{F}$, operators $E_1,\dots,E_{n-1}$ on $\mathcal{E}$ satisfy the commutativity condition in \eqref{eq1.1} and a $\Gamma_n$-unitary $(U_1,\dots,U_n)$ on $\mathcal{F}$ such that the tuple $(V_1,\ldots,V_n)$ is unitarily equivalent to the operator tuple
	\begin{equation*}\label{eq2.2}
		\left(\begin{bmatrix}
			M_{\Phi_1} & 0\\
			0 & U_1
		\end{bmatrix},\dots,\begin{bmatrix}
			M_{\Phi_{n-1}} & 0\\
			0 & U_{n-1}
		\end{bmatrix}, \begin{bmatrix}
			M_z & 0\\
			0 & U_n
		\end{bmatrix}\right),
	\end{equation*}
	where $M_{\Phi_i}$ is the multiplication operator on $H_{\mathcal{E}}^2(\mathbb{D})$ corresponding to  $\Phi_i(z)=E_i^*+zE_{n-i} \in H^\infty (\mathcal{B}(\mathcal{E}))$, and $M_z$ denotes the pure isometry.
\end{thm}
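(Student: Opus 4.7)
The forward direction is the substantive one; the converse assembles from the preliminaries. Suppose $(V_1,\ldots,V_n)$ is a $\Gamma_n$-isometry. Theorem \ref{thm1.4} then gives that $V_n$ is an isometry, that $V_i=V_{n-i}^*V_n$ for each $i=1,\ldots,n-1$, and that $(\gamma_1V_1,\ldots,\gamma_{n-1}V_{n-1})$ is a $\Gamma_{n-1}$-contraction. Apply the classical von Neumann--Wold decomposition to $V_n$ to obtain $\mathcal{H}=\mathcal{H}_s\oplus\mathcal{H}_u$, where $\mathcal{H}_u=\bigcap_{k\geq 0}V_n^k\mathcal{H}$, $V_n|_{\mathcal{H}_u}$ is unitary, and $V_n|_{\mathcal{H}_s}$ is a pure isometry.

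I next verify that both $\mathcal{H}_u$ and $\mathcal{H}_s$ reduce every $V_i$. For $V_i$-invariance of $\mathcal{H}_u$: if $h\in V_n^k\mathcal{H}$ is written as $V_n^kh_k$, then $V_ih=V_n^kV_ih_k\in V_n^k\mathcal{H}$ by commutativity of $V_i$ with $V_n$. For $V_i^*$-invariance, use $V_i^*=V_n^*V_{n-i}$ from Theorem \ref{thm1.4}: since $V_{n-i}\mathcal{H}_u\subseteq\mathcal{H}_u$ and $V_n^*|_{\mathcal{H}_u}=(V_n|_{\mathcal{H}_u})^{-1}$, the image lies in $\mathcal{H}_u$. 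Each $V_i$ thus splits as $V_i^s\oplus V_i^u$, and Lemma \ref{lem2.1}(2) makes both $(V_1^s,\ldots,V_n^s)$ and $(V_1^u,\ldots,V_n^u)$ into $\Gamma_n$-contractions. Since $V_n^u$ is unitary, Theorem \ref{thm1.4}(3)$\Rightarrow$(1) promotes $(V_1^u,\ldots,V_n^u)$ to a $\Gamma_n$-unitary, which we designate $(U_1,\ldots,U_n)$ on $\mathcal{F}:=\mathcal{H}_u$.

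For the pure piece, set $\mathcal{E}:=\mathcal{H}_s\ominus V_n^s\mathcal{H}_s$; the Wold decomposition of $V_n^s$ supplies a unitary $W:\mathcal{H}_s\to H^2_{\mathcal{E}}(\mathbb{D})$ with $WV_n^sW^*=M_z$. Each $V_i^s$ commutes with $V_n^s$, so $WV_i^sW^*$ commutes with $M_z$ and is therefore an analytic Toeplitz operator $M_{\Phi_i}$ with $\Phi_i\in H^\infty(\mathcal{B}(\mathcal{E}))$. Writing $\Phi_{n-i}(z)=\sum_{k\geq 0}A_kz^k$ and using $M_{\Phi_i}=M_{\Phi_{n-i}}^*M_z$, a direct Fourier expansion yields $M_{\Phi_{n-i}}^*M_z=A_1^*+A_0^*M_z+\sum_{k\geq 2}A_k^*M_z^{*(k-1)}$; since the left side is an analytic multiplication, testing both sides on vectors $z^{k_0}e$ with $e\in\mathcal{E}$ forces $A_k=0$ for all $k\geq 2$ and gives $\Phi_i(z)=A_1^*+zA_0^*$. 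Setting $E_i:=A_1$ and $E_{n-i}^*:=A_0$ recovers $\Phi_i(z)=E_i^*+zE_{n-i}$, and the commutativity of the $M_{\Phi_i}$'s, inherited from that of the $V_i^s$'s, reduces via Proposition \ref{eg1.5} to condition \eqref{eq1.1} on the $E_i$'s.

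The converse direction is then immediate: Proposition \ref{eg1.5} together with \eqref{eq1.1} yields commutativity and $M_{\Phi_i}=M_{\Phi_{n-i}}^*M_z$; Lemma \ref{lem2.1}(2) transfers the hypothesized $\Gamma_{n-1}$-contractivity of $(\gamma_1V_1,\ldots,\gamma_{n-1}V_{n-1})$ to $(\gamma_1M_{\Phi_1},\ldots,\gamma_{n-1}M_{\Phi_{n-1}})$; Theorem \ref{thm1.4}(2)$\Rightarrow$(1) now makes the pure piece a $\Gamma_n$-isometry, the unitary piece is trivially one, and Lemma \ref{lem2.2} assembles them into a $\Gamma_n$-isometry unitarily equivalent to $(V_1,\ldots,V_n)$. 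The main obstacle I expect is the derivation of the affine form of $\Phi_i$: it requires careful Fourier-coordinate analysis of $M_{\Phi_{n-i}}^*M_z$ and the observation that backward shifts $M_z^{*(k-1)}$ for $k\geq 2$ cannot contribute non-trivially to an analytic Toeplitz operator, so that all higher Fourier coefficients of $\Phi_{n-i}$ must vanish.
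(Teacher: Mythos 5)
Your proof is correct, and it reaches the same block-diagonal model as the paper via the same overall strategy (Wold decomposition of $V_n$, analytic Toeplitz representation of the pure part, affine form of the symbols), but the mechanism you use for block-diagonality is genuinely different and worth noting. The paper conjugates by the Wold unitary, writes each $UV_iU^*$ as a full $2\times 2$ block matrix, and kills the off-diagonal entries one at a time: the $(1,2)$ entry via the intertwining relation $V_{12}^{(i)}U_n=M_zV_{12}^{(i)}$ together with an external lemma of Agler--Young \cite[Lemma 2.5]{ay1}, and the $(2,1)$ entry via the relation $V_i=V_{n-i}^*V_n$. You instead prove directly that the unitary Wold subspace $\mathcal{H}_u=\bigcap_{k\ge 0}V_n^k\mathcal{H}$ reduces every $V_i$ (invariance from commutativity, $*$-invariance from $V_i^*=V_n^*V_{n-i}$ and invertibility of $V_n|_{\mathcal{H}_u}$), which makes the splitting automatic and avoids the appeal to \cite{ay1} entirely; this is more self-contained and conceptually cleaner. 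Conversely, you spell out in Fourier coordinates the step the paper compresses into one sentence, namely that $M_{\Phi_i}=M_{\Phi_{n-i}}^*M_z$ forces $\Phi_i(z)=E_i^*+zE_{n-i}$; your degree-counting argument there is sound. Two small presentational points: you should state explicitly that the identity $M_{\Phi_i}=M_{\Phi_{n-i}}^*M_z$ is obtained by restricting $V_i=V_{n-i}^*V_n$ to the reducing subspace $\mathcal{H}_s$ and conjugating by $W$, and note that in the converse your citation of Theorem \ref{thm1.4} (2)$\Rightarrow$(1) is the right one --- the paper's invocation of Theorem \ref{thm1.2} at that point appears to be a typographical slip, so your version is in fact the more accurate reference.
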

\begin{proof}
	Assume that $(V_1,\ldots,V_n)$ is a $\Gamma_n$-isometry. By Theorem \ref{thm1.4}, the tuple $(V_1,\ldots,V_n)$ is a $\Gamma_n$-contraction, and $V_n$ is an isometry. If $(V_1,\ldots,V_n)$ is a $\Gamma_n$-contraction, then  $(\gamma_1V_1,\dots,\gamma_{n-1}V_{n-1})$ is a $\Gamma_{n-1}$-contraction. Applying the Wold decomposition to the isometry $V_n$, we get: there exist Hilbert spaces $\mathcal E$ and $\mathcal F$ along with a unitary operator $U:\mathcal{H}\rightarrow H_{\mathcal{E}}^2(\mathbb{D})\oplus \mathcal{F}
	$ such that 
	\begin{equation}\label{eq2.3}
		UV_nU^*=\begin{bmatrix}
			M_z & 0\\
			0 & U_n
		\end{bmatrix}:H_{\mathcal{E}}^2(\mathbb{D})\oplus \mathcal{F}\rightarrow H_{\mathcal{E}}^2(\mathbb{D})\oplus \mathcal{F},
	\end{equation} for some unitary $U_n$ on $\mathcal F$. We claim that the same unitary operator $U$ appearing in equation \eqref{eq2.3} satisfies the following relation
	\begin{equation}\label{eq2.4}
		U(V_1, \dots, V_{n-1})U^*=\left(\begin{bmatrix}
			M_{\Phi_1} & 0\\
			0 & U_1
		\end{bmatrix},\dots,\begin{bmatrix}
			M_{\Phi_{n-1}} & 0\\
			0 & U_{n-1}
		\end{bmatrix}\right),
	\end{equation}
	where each $M_{\Phi_i}$ denotes a multiplication operator on $H_{\mathcal{E}}^2(\mathbb{D})$, and $U_i \in \mathcal{B}(\mathcal{F})$ for $i=1,\dots,n-1$. To establish this, suppose more generally that 
	$$U(V_1, \dots, V_{n-1})U^*=\left(\begin{bmatrix}
		V_{11}^{(1)} & V_{12}^{(1)}\\
		V_{21}^{(1)} & U_1
	\end{bmatrix},\dots, \begin{bmatrix}
		V_{11}^{(n-1)} & V_{12}^{(n-1)}\\
		V_{21}^{(n-1)} & U_{n-1}
	\end{bmatrix} \right).$$ 
	Since $(V_1,\ldots,V_n)$ is a $\Gamma_n$-isometry, the operators $V_i$ and $V_j$ commute for all $i,j=1,\dots,n$. Consequently,
	\begin{equation}\label{eq2.6}
		UV_iU^*UV_jU^*=UV_jU^*UV_iU^*.
	\end{equation}  Now, by setting $j=n$ and for $i=1,\dots,n-1$ in equation \eqref{eq2.6}, we obtain $$\begin{bmatrix}
		V_{11}^{(i)} & V_{12}^{(i)}\\
		V_{21}^{(i)} & U_i
	\end{bmatrix}\begin{bmatrix}
		M_z & 0\\
		0 & U_n
	\end{bmatrix}=\begin{bmatrix}
		M_z & 0\\
		0 & U_n
	\end{bmatrix} \begin{bmatrix}
		V_{11}^{(i)} & V_{12}^{(i)}\\
		V_{21}^{(i)} & U_i
	\end{bmatrix}.$$ It is equivalent to  $$\begin{bmatrix}
		V_{11}^{(i)}M_z & V_{12}^{(i)}U_n \\
		V_{21}^{(i)}M_z & U_iU_n
	\end{bmatrix}=\begin{bmatrix}
		M_zV_{11}^{(i)} & M_zV_{12}^{(i)} \\
		U_nV_{21}^{(i)} & U_nU_i
	\end{bmatrix},$$ for all $i=1,\dots,n-1$. Comparing the $(1,2)$-th entry of the above equation, we have $V_{12}^{(i)}U_n=M_zV_{12}^{(i)}$ for all $i=1,\dots,n-1$. By \cite[Lemma 2.5]{ay1}, it follows that $V_{12}^{(i)}=0$ for all $i=1,\dots,n-1.$ Again using the characterization of $\Gamma_n$-isometry, it is evident that  $V_i=V^*_{n-i}V_n$ for all $i=1,\dots,n-1$. Hence, 
	\begin{equation*}\label{2.7}
		UV_iU^*=UV_{n-i}^*U^*UV_nU^*,
	\end{equation*} for all $i=1,\dots,n-1$.  Substituting the value we get  $$\begin{bmatrix}
		V_{11}^{(i)} & 0\\
		V_{21}^{(i)} & U_i
	\end{bmatrix}=\begin{bmatrix}
		{V_{11}^{(n-i)}}^* & {V_{21}^{(n-i)}}^*\\
		0 & U_{n-i}^*
	\end{bmatrix}\begin{bmatrix}
		M_z & 0\\
		0 & U_n
	\end{bmatrix},$$ for all $i=1,\dots,n-1$. The above equation can be expressed equivalently as 
	\begin{equation}\label{matrix-eq1}
		\begin{bmatrix}
			V_{11}^{(i)} & 0\\
			V_{21}^{(i)} & U_i
		\end{bmatrix}=\begin{bmatrix}
			{V_{11}^{(n-i)}}^*M_z & {V_{21}^{(n-i)}}^*U_n\\
			0 & U_{n-i}^*U_i
		\end{bmatrix},
	\end{equation} for all $i=1,\dots,n-1$. Compare the $(2,1)$-th entry of equation \ref{matrix-eq1} to get $V^{(i)}_{21}=0$ for all $i=1,\dots,n-1.$ Therefore, we are left with the following operator equations
	\begin{equation}\label{eq2.7}
		V^{(i)}_{11}M_z=M_zV^{(i)}_{11}, \quad \text{for} \quad i=1,\dots,n-1,
	\end{equation}
	and
	\begin{equation}\label{matrix-eq2}
		V^{(i)}_{11}={V_{11}^{(n-i)}}^*M_z,\quad \text{for} \quad i=1,\dots,n-1.
	\end{equation}
	Observe that each operator $V_{11}^{(i)}\in \mathcal{B}(H_{\mathcal{E}}^2(\mathbb{D}))$. By equation \eqref{eq2.7}, there exists a bounded analytic function $\Phi_i:\mathbb{D}\rightarrow \mathcal{B}(\mathcal E)$ such that $V_{11}^{(i)}=M_{\Phi_i}$ for all $i=i,\dots,n-1$, where $M_{\Phi_i}$ denotes the multiplication operator on $H_{\mathcal{E}}^2(\mathbb{D})$. Therefore,
	\begin{equation}\label{eq2.9}
		M_{\Phi_i}=M_{\Phi_{n-i}}^*M_z,
	\end{equation} for all $i=1,\dots,n-1$. Now, inserting the power series expansion of $\Phi_i$ in equation \eqref{eq2.9}, implies that $\Phi_i(z)=E_i^*+zE_{n-i}$, for some operator $E_i, E_{n-i} \in \mathcal{B}(\mathcal{E})$. This establishes the assertion in equation \eqref{eq2.4} that the off-diagonal blocks $V_{12}^{(i)}$ and $V_{21}^{(i)}$ are equal to $0$, and that $V_{11}^{(i)} = M_{\Phi_i}$ for each $i = 1, \dots, n-1$. Hence, using equation \eqref{eq2.3} and \eqref{eq2.4}, we arrive at the following relation 
	\begin{equation*}
		U(V_1,\ldots,V_{n-1},V_n)U^*=\left(\begin{bmatrix}
			M_{\Phi_1} & 0\\
			0 & U_1
		\end{bmatrix},\dots,\begin{bmatrix}
			M_{\Phi_{n-1}} & 0\\
			0 & U_{n-1}
		\end{bmatrix},\begin{bmatrix}
			M_z & 0\\
			0 & U_n
		\end{bmatrix}\right).
	\end{equation*}Next, we show that the operators $E_1,\dots,E_{n-1}$ obtained from equation \eqref{eq2.9} satisfy the commutativity condition \eqref{eq1.1}. Applying equation \eqref{eq2.6} it yields that $$\begin{bmatrix}
		M_{\Phi_i}	M_{\Phi_j} & 0\\
		0 & U_iU_j
	\end{bmatrix}=\begin{bmatrix}
		M_{\Phi_j}M_{\Phi_i} & 0\\
		0 & U_jU_i
	\end{bmatrix},$$ for all $i,j = 1, \dots, n-1$. Observe that the commutativity of the multiplication operator $M_{\Phi_i}$, that is,
	\begin{equation}\label{eq2.10}
		M_{\Phi_i}M_{\Phi_j}=M_{\Phi_j}M_{\Phi_i}, 
	\end{equation} for all $i,j = 1, \dots, n-1$, implies that the operators $E_1,\dots,E_{n-1}$ obtained from equation \eqref{eq2.9} satisfy the commutativity condition \eqref{eq1.1}.
	%	Next, from the assumption that $(\gamma_1 M_{\Phi_1},\ldots,\gamma_{n-1} M_{\Phi_{n-1}})$ is $\Gamma_{n-1}$-contraction, equation \eqref{eq2.10}, equation \eqref{eq2.9} and Theorem \ref{thm1.4} we obtain that the tuple $(M_{\Phi_1},\dots,M_{\Phi_{n-1}},M_z)$ is $\Gamma_n$-isometry. 
	Finally, we show that $(U_1,\dots,U_n)$ is a $\Gamma_n$-unitary. Since $(V_1,\ldots,V_n)$ is assumed to be a $\Gamma_n$-isometry, an application of Lemma~\ref{lem2.1}, parts $(1)$ and then $(2)$, yields that the tuple $(U_1,\dots,U_n)$ is a $\Gamma_n$-contraction. Furthermore, as $U_n$ is unitary, it follows that $(U_1,\dots,U_n)$ is in fact a $\Gamma_n$-unitary. We now proceed to establish the converse implication. Suppose that \begin{equation*}
		(V_1,\ldots,V_{n-1},V_n)=U^*\left(\begin{bmatrix}
			M_{\Phi_1} & 0\\
			0 & U_1
		\end{bmatrix},\dots,\begin{bmatrix}
			M_{\Phi_{n-1}} & 0\\
			0 & U_{n-1}
		\end{bmatrix},\begin{bmatrix}
			M_z & 0\\
			0 & U_n
		\end{bmatrix}\right)U,
	\end{equation*} for some unitary operator $U$. Our goal is to establish that $(V_1,\ldots,V_n)$ is a $\Gamma_n$-isometry. It is clear that $V_n$ is an isometry. From equation \eqref{eq2.9}, \eqref{eq2.10}, together with the assumption that $(U_1,\dots,U_n)$ is a $\Gamma_n$-unitary, it follows that the operators $V_1,\dots,V_n$ are mutually commuting and satisfy the relation $V_i=V_{n-i}^*V_n$ for each $i=1,\dots,n-1$. Since the tuple $(\gamma_1V_1,\dots,\gamma_{n-1}V_{n-1})$ is a $\Gamma_{n-1}$-contraction, therefore, by Theorem \ref{thm1.2} we conclude that $(V_1,\ldots,V_n)$ is a $\Gamma_n$-isometry. This completes the proof.
	%the block diagonal tuple
	%	\begin{equation*}
		%		\left(\begin{bmatrix}
			%			M_{\Phi_1} & 0\\
			%			0 & T_1
			%		\end{bmatrix},\dots,\begin{bmatrix}
			%			M_{\Phi_{n-1}} & 0\\
			%			0 & T_{n-1}
			%		\end{bmatrix},\begin{bmatrix}
			%			M_z & 0\\
			%			0 & T_n
			%		\end{bmatrix}\right)
		%	\end{equation*}
	%	is a $\Gamma_n$-isometry. Finally, applying Lemma \ref{lem2.1} part $(1)$
	
\end{proof}
%The preceding Theorem establishes that statement $(1)\implies (2)$. With the condition in $(2)$, if one assumes that $(\gamma_1 M_{\Phi_1},\ldots,\gamma_{n-1} M_{\Phi_{n-1}})$ is a $\Gamma_{n-1}$-contraction, then $(1)\iff(2)$.
The following corollary demonstrates that, under suitable hypothesis, $\Gamma_n$-isometry extends to a $\Gamma_{n}$-unitary.
\begin{cor}\label{cor2.4}
	Assume that $\Phi_1,\dots,\Phi_{n-1}\in H^{\infty}(\mathcal{B}(\mathcal{E}))$ are obtained as in Theorem \ref{thm2.3} such that $(\gamma_1\Phi_1(z),\dots,\gamma_{n-1}\Phi_{n-1}(z))$ is a $\Gamma_{n-1}$-contraction for all $z\in \mathbb{T}$. If $(V_1,\dots,V_n)$ is a $\Gamma_n$-isometry then it extends to a $\Gamma_n$-unitary acting on the space of the minimal unitary extension of the isometry $S_n$.
	%	 $(\gamma_1 M_{\Phi_1},\ldots,\gamma_{n-1} M_{\Phi_{n-1}})$ acting on $H^2(\mathcal{E})$ is a $\Gamma_{n-1}$-contraction and  
\end{cor}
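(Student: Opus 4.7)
The plan is to leverage the structure theorem for $\Gamma_n$-isometries (Theorem \ref{thm2.3}) to reduce the problem to extending the analytic piece, and then use multiplication (Laurent) operators on $L^2_{\mathcal{E}}(\mathbb{T})$ to construct the extension. First, I would invoke Theorem \ref{thm2.3} to obtain a unitary $U:\mathcal{H}\to H^2_{\mathcal{E}}(\mathbb{D})\oplus \mathcal{F}$ such that $UV_iU^{*}=M_{\Phi_i}\oplus U_i$ for $i=1,\dots,n-1$ and $UV_nU^{*}=M_z\oplus U_n$, where $(U_1,\dots,U_n)$ is already a $\Gamma_n$-unitary on $\mathcal{F}$. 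Thus only the analytic summand on $H^2_{\mathcal{E}}(\mathbb{D})$ needs to be enlarged, and the natural candidate for its minimal unitary extension is $L^2_{\mathcal{E}}(\mathbb{T})$, on which $M_z$ extends to the bilateral shift $L_z$.

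Next, I would introduce the Laurent operators $L_{\Phi_i}$ on $L^2_{\mathcal{E}}(\mathbb{T})$ defined by $(L_{\Phi_i}f)(z)=\Phi_i(z)f(z)$. Since $\Phi_i\in H^{\infty}(\mathcal{B}(\mathcal{E}))$, the subspace $H^2_{\mathcal{E}}(\mathbb{D})$ is invariant under $L_{\Phi_i}$ with $L_{\Phi_i}|_{H^2_{\mathcal{E}}(\mathbb{D})}=M_{\Phi_i}$, and similarly $L_z|_{H^2_{\mathcal{E}}(\mathbb{D})}=M_z$. The central task is to show that $(L_{\Phi_1},\dots,L_{\Phi_{n-1}},L_z)$ is a $\Gamma_n$-unitary on $L^2_{\mathcal{E}}(\mathbb{T})$. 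For this I would verify the three conditions in Theorem \ref{thm1.4}(2): (i) $L_z$ is unitary; (ii) the relation $L_{\Phi_i}=L_{\Phi_{n-i}}^{*}L_z$ follows from the pointwise identity $\Phi_{n-i}(z)^{*}z=(E_{n-i}+\bar z E_i^{*})z=E_i^{*}+zE_{n-i}=\Phi_i(z)$ valid for $z\in\mathbb{T}$; and (iii) $(\gamma_1 L_{\Phi_1},\dots,\gamma_{n-1}L_{\Phi_{n-1}})$ is a $\Gamma_{n-1}$-contraction, obtained by the direct-integral estimate
\begin{equation*}
\|p(\gamma_1 L_{\Phi_1},\dots,\gamma_{n-1}L_{\Phi_{n-1}})f\|^{2}=\int_{\mathbb{T}}\|p(\gamma_1\Phi_1(z),\dots,\gamma_{n-1}\Phi_{n-1}(z))f(z)\|_{\mathcal{E}}^{2}\,dm(z)\leq\|p\|^{2}_{\infty,\Gamma_{n-1}}\|f\|^{2},
\end{equation*}
where the last inequality uses precisely the standing hypothesis that $(\gamma_1\Phi_1(z),\dots,\gamma_{n-1}\Phi_{n-1}(z))$ is a $\Gamma_{n-1}$-contraction for every $z\in\mathbb{T}$.

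To finish, I would set $\widetilde V_i:=L_{\Phi_i}\oplus U_i$ for $i=1,\dots,n-1$ and $\widetilde V_n:=L_z\oplus U_n$ on $L^2_{\mathcal{E}}(\mathbb{T})\oplus\mathcal{F}$. Since both $(L_{\Phi_1},\dots,L_{\Phi_{n-1}},L_z)$ and $(U_1,\dots,U_n)$ are $\Gamma_n$-unitaries, Lemma \ref{lem2.2} implies that $(\widetilde V_1,\dots,\widetilde V_n)$ is a $\Gamma_n$-unitary. The subspace $H^2_{\mathcal{E}}(\mathbb{D})\oplus\mathcal{F}$ is common invariant for each $\widetilde V_i$, and the restriction reproduces $UV_iU^{*}$; pulling back through $U^{*}$ identifies $(V_1,\dots,V_n)$ as the restriction of $(\widetilde V_1,\dots,\widetilde V_n)$. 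Moreover, the underlying space is exactly the minimal unitary extension of $V_n$, since $L_z\oplus U_n$ is the minimal unitary extension of $M_z\oplus U_n$.

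The principal obstacle is step (iii): passing from the \emph{pointwise} $\Gamma_{n-1}$-contraction property of the operator-valued family $\{(\gamma_1\Phi_1(z),\dots,\gamma_{n-1}\Phi_{n-1}(z))\}_{z\in\mathbb{T}}$ to a \emph{global} $\Gamma_{n-1}$-contraction property of the Laurent tuple. This is handled cleanly via the direct-integral decomposition $L^2_{\mathcal{E}}(\mathbb{T})=\int^{\oplus}_{\mathbb{T}}\mathcal{E}\,dm(z)$ as above, under which multiplication operators decompose fibrewise and scalar polynomial inequalities integrate. A secondary point is the implicit normality of $\Phi_i(z)$ needed so that $L_{\Phi_i}$ is normal; but this is encoded in the hypothesis together with the relation $\Phi_i(z)=\Phi_{n-i}(z)^{*}z$ and Theorem \ref{thm1.4}, which upgrades the pointwise $\Gamma_{n-1}$-contraction/$z$-unitary data to a pointwise $\Gamma_n$-unitary, forcing each $\Phi_i(z)$ to be normal.
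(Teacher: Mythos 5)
Your proposal is correct and follows essentially the same route as the paper: reduce via Theorem \ref{thm2.3} to the block form $M_{\Phi_i}\oplus U_i$, extend to the Laurent multiplication tuple on $L^2_{\mathcal{E}}(\mathbb{T})\oplus\mathcal{F}$, verify the conditions of Theorem \ref{thm1.4} for the Laurent tuple, and assemble the direct sum with Lemma \ref{lem2.2}. The only difference is that where the paper invokes Lemma 4.8 of \cite{sbs} to pass from the pointwise hypothesis on $(\gamma_1\Phi_1(z),\dots,\gamma_{n-1}\Phi_{n-1}(z))$ to the $\Gamma_{n-1}$-contractivity of the Laurent tuple, you prove that step inline by the fibrewise direct-integral estimate, which is precisely the content of the cited lemma.
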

\begin{proof}
	Suppose $(V_1,\dots,V_n)$ is a $\Gamma_n$-isometry. By virtue of Theorem~\ref{thm2.3}, we can assume without loss of generality that $(V_1,\dots,V_n)$ admits the following form
	\begin{equation*}
		\left(\begin{bmatrix}
			M_{\Phi_1(z)} & 0\\
			0 & U_1
		\end{bmatrix},\dots,\begin{bmatrix}
			M_{\Phi_{n-1}(z)} & 0\\
			0 & U_{n-1}
		\end{bmatrix},\begin{bmatrix}
			M_z & 0\\
			0 & U_n
		\end{bmatrix}\right):\begin{bmatrix}
			H_{\mathcal{E}}^2(\mathbb{D})\\
			\mathcal{F}
		\end{bmatrix} \rightarrow \begin{bmatrix}
			H_{\mathcal{E}}^2(\mathbb{D})\\
			\mathcal{F}
		\end{bmatrix}
	\end{equation*}
	for some Hilbert spaces $\mathcal{E}$ and $\mathcal{F}$, operators $E_1,\dots,E_{n-1}$ on $\mathcal{E}$ satisfying the commutativity condition \eqref{eq1.1} and a $\Gamma_n$-unitary $(U_1,\dots,U_n)$ acting on $\mathcal{F}$. Here $M_{\Phi_i}$ denotes the multiplication operator on $H_{\mathcal{E}}^2(\mathbb{D})$. We regard $H_{\mathcal{E}}^2(\mathbb{D})\oplus \mathcal{F}$ as a subspace of $L_{\mathcal{E}}^2(\mathbb{T})\oplus \mathcal{F}$ in the natural way. With this identification, for $\eta \in \mathbb{T}$, the operator tuple  
	\begin{equation*}
		\left(\begin{bmatrix}
			M_{\Phi_1(\eta)} & 0\\
			0 & U_1
		\end{bmatrix},\dots,\begin{bmatrix}
			M_{\Phi_{n-1}(\eta)} & 0\\
			0 & U_{n-1}
		\end{bmatrix},\begin{bmatrix}
			M_{\eta} & 0\\
			0 & U_n
		\end{bmatrix}\right):\begin{bmatrix}
			L_{\mathcal{E}}^2(\mathbb{T})\\
			\mathcal{F}
		\end{bmatrix} \rightarrow \begin{bmatrix}
			L_{\mathcal{E}}^2(\mathbb{T})\\
			\mathcal{F}
		\end{bmatrix}
	\end{equation*}
	serves as the natural extension of the tuple $(V_1,\dots,V_n)$ defined on $ H_{\mathcal{E}}^2(\mathbb{D}) \oplus \mathcal{F}$. Now, we prove that the following operator tuple
	\begin{equation*}
		\left(\begin{bmatrix}
			M_{\Phi_1(\eta)} & 0\\
			0 & U_1
		\end{bmatrix},\dots,\begin{bmatrix}
			M_{\Phi_{n-1}(\eta)} & 0\\
			0 & U_{n-1}
		\end{bmatrix},\begin{bmatrix}
			M_{\eta} & 0\\
			0 & U_n
		\end{bmatrix}\right)
	\end{equation*}
	is a $\Gamma_n$-unitary. It is easy to observe that the operator $\begin{bmatrix}
		M_{\eta} & 0\\
		0 & U_n
	\end{bmatrix}$ is unitary on $L_{\mathcal{E}}^2(\mathbb{T}) \oplus \mathcal{F}$, and it constitutes the minimal unitary extension of the isometry
	$\begin{bmatrix}
		M_z & 0 \\
		0 & U_n
	\end{bmatrix}
	$
	acting on $H_{\mathcal{E}}^2(\mathbb{D}) \oplus \mathcal{F}$. 
	Using the characterization of $\Gamma_n$-isometry, we know that $$\begin{bmatrix}
		M_{\Phi_i(z)} & 0\\
		0 & U_i
	\end{bmatrix}\begin{bmatrix}
		M_{\Phi_j(z)} & 0\\
		0 & U_j
	\end{bmatrix}=\begin{bmatrix}
		M_{\Phi_j(z)} & 0\\
		0 & U_j
	\end{bmatrix}\begin{bmatrix}
		M_{\Phi_i(z)} & 0\\
		0 & U_i
	\end{bmatrix}.$$
	for all $i,j=1,\dots,n-1$. This operator identity implies that $U_i$ and $U_j$ commute, that is, $U_iU_j=U_jU_i$ and the Toeplitz operator symbols equal to the operator pencil $\Phi_i(z)$ and $\Phi_j(z)$ also commute
	\begin{equation*}
		\Phi_i(z)\Phi_j(z)=\Phi_j(z)\Phi_i(z)
	\end{equation*}  
	for all $i,j=1,\dots,n-1$. As a consequence, the associated Laurent multiplication operators acting on $L_{\mathcal{E}}^2(\mathbb{T})$ commutes  
	\begin{equation}\label{ch1}
		M_{\Phi_i(\eta)}M_{\Phi_j(\eta)}=M_{\Phi_j(\eta)}M_{\Phi_i(\eta)}.
	\end{equation}
	for all $i,j=1,\dots,n-1$. Moreover
	\begin{equation}\label{ch2}
		M_{\Phi_i(\eta)}M_{\eta}=M_{\eta}M_{\Phi_i(\eta)},
	\end{equation}
	and
	\begin{equation}\label{ch3}
		M_{\Phi_i(\eta)}=M^*_{\Phi_{n-i}(\eta)}M_{\eta}
	\end{equation}
	for all $i=1,\dots,n-1$. By \cite[Lemma 4.8]{sbs}, $(\gamma_1M_{\Phi_1(\eta)},\dots,\gamma_{n-1}M_{\Phi_{n-1}(\eta)})$ is a $\Gamma_{n-1}$-contraction if and only if the tuple $(\gamma_1\Phi_1(\eta),\dots,\gamma_{n-1}\Phi_{n-1}(\eta))$ is a $\Gamma_{n-1}$-contraction for all $\eta\in \mathbb{T}$. This fact along with the condition in equation \eqref{ch1},\eqref{ch2}, and \eqref{ch3} implies that  $(M_{\Phi_1(\eta)},\dots,M_{\Phi_{n-1}(\eta)},M_{\eta})$ is a $\Gamma_n$-unitary. Furthermore, since $(U_1,\dots,U_n)$ and $(M_{\Phi_1(\eta)},\dots,M_{\Phi_{n-1}(\eta)},M_{\eta})$ are $\Gamma_n$-unitaries, it follows from Lemma \ref{lem2.2} that the block diagonal operator tuple \begin{equation*}
		\left(\begin{bmatrix}
			M_{\Phi_1(\eta)} & 0\\
			0 & U_1
		\end{bmatrix},\dots,\begin{bmatrix}
			M_{\Phi_{n-1}(\eta)} & 0\\
			0 & U_{n-1}
		\end{bmatrix},\begin{bmatrix}
			M_{\eta} & 0\\
			0 & U_n
		\end{bmatrix}\right),
	\end{equation*}
	is also a $\Gamma_n$-unitary. This completes the proof.
	%Conversely, if a tuple extends to a $\Gamma_n$-unitary then by Theorem \ref{thm1.4} it is also a $\Gamma_n$-isometry. 
\end{proof}
Next, we use Corollary \ref{cor2.4} to construct the canonical $\Gamma_n$-unitary from a $\Gamma_n$-contraction. Let $(S_1, \dots, S_n)$ be a $\Gamma_n$-contraction acting on a Hilbert space $\mathcal{H}$. It is well known that for a $\Gamma_n$-contraction $(S_1, \dots, S_n)$, the following operator norm bounds hold (see \cite[Proposition 4.1]{19})
\begin{equation*}
	\|S_i\| \leq \binom{n-1}{i} + \binom{n-1}{n-i}, \quad \text{for } i = 1, \dots, n-1, \quad \text{and} \quad \|S_n\| \leq 1.
\end{equation*}
Moreover, the contraction $S_n$ satisfies the following chain of operator inequalities
\begin{equation*}
	I \geq S_n S_n^* \geq S_n^2 {S_n^*}^2 \geq \cdots \geq S_n^k {S_n^*}^k \geq \cdots \geq 0.
\end{equation*}
Hence, there exist a positive semi definite operator $\mathcal P_{S_n^*}$ such that 
\begin{equation*}
	{\mathcal P_{S_n^*}}^2=\text{SOT}-\lim S_n^k{S_n^*}^k.
\end{equation*} Define the operators $X^*_{S_n^*},{S_i^*}_{S_n^*}:\overline{\text{Ran}}\mathcal P_{S_n^*}\rightarrow\overline{\text{Ran}}\mathcal P_{S_n^*}$ densely by \begin{equation}
	X^*_{S_n^*}\mathcal P_{S_n^*}h=\mathcal P_{S_n^*}S_n^*h,
\end{equation}
and
\begin{equation}\label{eq-def}
	{S_i^*}_{S_n^*}\mathcal P_{S_n^*}h=
	\mathcal P_{S_n^*}S_i^*h,
\end{equation}
for all $i=1,\dots,n-1$.
%Using Equation \eqref{eq2.2} it is easy to verify that $X^*_{S_n^*}$ is an isometry. Moreover, for all $i=0,\dots n-1$
%\begin{align*}
%	\langle S_i{\mathcal P_{S_n^*}}^2S_i^*h,h\rangle&=\langle\lim_{k\rightarrow \infty}S_n^kS_iS_i^*{S_n^*}^kh,h\rangle\\
%	&\leq \binom{n-1}{i}+ \binom{n-1}{n-i}\lim_{k\rightarrow \infty}\langle S_nS_n^*h,h\rangle\\ 
%	&=\binom{n-1}{i}+ \binom{n-1}{n-i}\langle{\mathcal P_{S_n^*}}^2h,h\rangle.
%\end{align*} 
%As a consequence, the operators ${S_i^*}_{S_n^*}:\overline{\text{Ran}}\mathcal P_{S_n^*}\rightarrow\overline{\text{Ran}}\mathcal P_{S_n^*}$ defined densely by 
% also obeys the condition $\|{S_i^*}_{S_n^*}\|\leq\binom{n-1}{i}+ \binom{n-1}{n-i}$. Hence using (\textcolor{red}{Chapter X, 317}) and \textcolor{red}{Hahn Banach Theorem} ${S_i^*}_{S_n^*}$ can be extended to all of $\overline{\text{Ran}}\mathcal P_{S_n^*}.$ 
Observe that the commutativity of $(S_1^*,\dots,S_n^*)$ guarantees that the tuple $({S_1^*}_{S^*_n},\dots,{S_{n-1}^*}_{S^*_n},X^*_{S_n^*})$ is also commutative. Furthermore, the tuple $({S_1^*}_{S^*_n},\dots,{S_{n-1}^*}_{S^*_n},X^*_{S_n^*})$ is a $\Gamma_n$-contraction. Infact, if $F$ is a $n$-variable polynomial then for all $h\in \mathcal{H}$ we have  
\begin{align*}
	\|F({S_1^*}_{S^*_n},\dots,{S_{n-1}^*}_{S^*_n},X^*_{S_n^*})\mathcal P_{S_n^*}h\|&=\|\mathcal P_{S_n^*}F(S_1^*,\dots,S_n^*)h\|\\
	&\leq\|F(S_1^*,\dots,S_n^*)\|\|h\|\\
	&\leq(\sup_{\Gamma_n}|f|)\|h\|.
\end{align*}
Since $X^*_{S_n^*}$ is an isometry, Theorem \ref{thm1.4} ensures that  $({S_1^*}_{S^*_n},\dots,{S_{n-1}^*}_{S^*_n},X^*_{S_n^*})$ is a $\Gamma_n$-isometry. Now, assume that $\hat{\Phi}_1,\dots,\hat{\Phi}_{n-1}\in H^{\infty}(\mathcal{B}(\mathcal{E}))$ are obtained from the $\Gamma_n$-isometry $({S_1^*}_{S^*_n},\dots,{S_{n-1}^*}_{S^*_n},X^*_{S_n^*})$ (existence follows from Theorem \ref{thm2.3}) such that, for every $z\in \mathbb{T}$, the tuple $(\gamma_1\hat{\Phi}_1(z),\dots,\gamma_{n-1}\hat{\Phi}_{n-1}(z))$ acting on $H^{\infty}(\mathcal{B}(\mathcal{E}))$ is a $\Gamma_{n-1}$-contraction. Finally, applying Corollary \ref{cor2.4}, we obtain that the tuple  $({S_1^*}_{S^*_n},\dots,{S_{n-1}^*}_{S^*_n},X^*_{S_n^*})$ has a unitary extension $(D_1^*,\dots,D_n^*)$ acting on the space $\mathscr{P}_{S_n^*}\supseteq\overline{\text{Ran}}\mathcal P_{S_n^*}$, where $D_n^*$ acting on $\mathscr{P}_{S_n^*}$ is the minimal unitary extension for $X^*_{S_n^*}$.

\begin{defn}
	Consider a $\Gamma_n$-contraction $(S_1,\dots,S_n)$  and assume that $(D_1,\dots,D_n)$ is the $\Gamma_n$-unitary constructed from it as described earlier. This operator tuple $(D_1,\dots,D_n)$ is referred to as the \textit{canonical $\Gamma_n$-unitary} associated with the $\Gamma_n$-contraction $(S_1,\dots,S_n)$.
\end{defn}

The next theorem shows that the canonical $\Gamma_n$-unitary corresponding to a given $\Gamma_n$-contraction $(S_1,\dots,S_n)$ is unique up to unitary equivalence.
\begin{thm}
	Let $(D_1,\dots,D_n)$ and $(D_1^{\prime},\dots,D_n^{\prime})$ be the canonical $\Gamma_n$-unitaries associated with the $\Gamma_n$-contractions $(S_1,\dots,S_n)$ on the Hilbert space $\mathcal{H}$  and $(S_1^{\prime},\dots,S_n^{\prime})$ on the Hilbert space  $\mathcal{H}^{\prime}$, respectively. Suppose $(S_1,\dots,S_n)$ and $(S_1^{\prime},\dots,S_n^{\prime})$ are unitary equivalent through $U$ then $(D_1,\dots,D_n)$ and $(D_1^{\prime},\dots,D_n^{\prime})$ are unitary equivalent through $U_D:\mathscr{P}_{S_n^*}\rightarrow\mathscr{P}_{S_n^{{\prime*}}}$ given by 
	\begin{equation}\label{eq2.5}
		U_D:D^k_n\mathcal{P}_{S_n^*}h\rightarrow {D^{\prime}}^k_n\mathcal{P}_{S_n^{\prime*}}Uh
	\end{equation}
	for all $k\geq0$ and $h\in \mathcal H$.
\end{thm}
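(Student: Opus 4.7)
The plan is to verify in three substantive steps --- preliminary intertwinings, well-definedness/isometry, and coordinate-wise intertwining --- that the prescription \eqref{eq2.5} extends to a unitary operator $U_D : \mathscr{P}_{S_n^*} \to \mathscr{P}_{S_n^{\prime*}}$ conjugating the two canonical $\Gamma_n$-unitaries. First, from $U S_i = S_i' U$ for $i = 1, \dots, n$, taking adjoints and iterating give $U S_n^{*k} = (S_n')^{*k} U$ for every $k \geq 0$. Passing to the SOT limit in $U (S_n^k S_n^{*k}) = (S_n')^k (S_n')^{*k} U$ yields $U \mathcal{P}_{S_n^*}^2 = \mathcal{P}_{S_n^{\prime*}}^2 U$, whence $U \mathcal{P}_{S_n^*} = \mathcal{P}_{S_n^{\prime*}} U$ by uniqueness of the positive square root. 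On the other hand, the minimality of the unitary extension $D_n^*$ of the isometry $X_{S_n^*}^*$ gives
\[
\mathscr{P}_{S_n^*} = \overline{\operatorname{Span}}\{D_n^k \mathcal{P}_{S_n^*} h : k \geq 0,\ h \in \mathcal{H}\},
\]
and similarly for $\mathscr{P}_{S_n^{\prime*}}$, which pins down the dense subspace on which $U_D$ is prescribed.

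Next I would check that $U_D$ is well-defined and isometric on this spanning family, which is the heart of the argument. For $k \geq l \geq 0$ and $h, g \in \mathcal{H}$, the defining relation $X_{S_n^*}^* \mathcal{P}_{S_n^*} h = \mathcal{P}_{S_n^*} S_n^* h$, iterated on $\mathcal{P}_{S_n^*}\mathcal{H}$, gives
\[
\langle D_n^k \mathcal{P}_{S_n^*} h,\ D_n^l \mathcal{P}_{S_n^*} g\rangle = \langle \mathcal{P}_{S_n^*} h,\ D_n^{*(k-l)} \mathcal{P}_{S_n^*} g\rangle = \langle \mathcal{P}_{S_n^*} h,\ \mathcal{P}_{S_n^*} S_n^{*(k-l)} g\rangle = \langle \mathcal{P}_{S_n^*}^2 h,\ S_n^{*(k-l)} g\rangle.
\]
Performing the same reduction on the primed side and invoking $U \mathcal{P}_{S_n^*}^2 = \mathcal{P}_{S_n^{\prime*}}^2 U$ and $U S_n^{*(k-l)} = (S_n')^{*(k-l)} U$ shows the two inner products agree. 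Hence $U_D$ extends consistently and isometrically to the linear span of the generators, and by continuity to an isometry on $\mathscr{P}_{S_n^*}$. Its range contains every $(D_n')^k \mathcal{P}_{S_n^{\prime*}}(Uh)$, which is a dense set by the unitarity of $U$, so $U_D$ is in fact unitary.

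The coordinate-wise intertwinings then follow by direct computation on the dense span. For $D_n$: $U_D D_n (D_n^k \mathcal{P}_{S_n^*} h) = U_D (D_n^{k+1} \mathcal{P}_{S_n^*} h) = (D_n')^{k+1} \mathcal{P}_{S_n^{\prime*}} Uh = D_n' U_D (D_n^k \mathcal{P}_{S_n^*} h)$. For $D_i$ with $i < n$ it is equivalent to check $U_D D_i^* = (D_i')^* U_D$. Since $(D_1, \dots, D_n)$ is a $\Gamma_n$-unitary, each $D_i$ is normal and commutes with $D_n$, so $D_i^*$ commutes with $D_n^k$; combined with $D_i^*|_{\overline{\text{Ran}}\,\mathcal{P}_{S_n^*}} = {S_i^*}_{S_n^*}$ this gives $D_i^*(D_n^k \mathcal{P}_{S_n^*} h) = D_n^k \mathcal{P}_{S_n^*} S_i^* h$. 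Applying $U_D$ and using $U S_i^* = (S_i')^* U$ produces $(D_n')^k \mathcal{P}_{S_n^{\prime*}} (S_i')^* Uh$, which by the analogous identity on the primed side equals $(D_i')^* U_D (D_n^k \mathcal{P}_{S_n^*} h)$.

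The main obstacle is the isometry calculation above: correctly reducing the $\mathscr{P}_{S_n^*}$ inner product of two generators to an expression living purely in $\mathcal{H}$ via the defining relation of $X_{S_n^*}^*$, so that the intertwining properties of $U$ developed in the preliminary step can be transported across to the primed space. Once this reduction is in hand, the remaining identities on $D_n$ and on $D_i$ for $i < n$ are routine manipulations on the dense spanning family.
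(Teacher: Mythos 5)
Your proof is correct and follows essentially the same route as the paper: both reduce to the dense span $\{D_n^k\mathcal{P}_{S_n^*}h : k\geq 0,\, h\in\mathcal{H}\}$, use the extension property $D_i^*\mathcal{P}_{S_n^*}h=\mathcal{P}_{S_n^*}S_i^*h$ of the canonical construction, and transport everything to the primed side via the intertwinings $U S_i^*=S_i^{\prime *}U$ and $U\mathcal{P}_{S_n^*}=\mathcal{P}_{S_n^{\prime *}}U$. The only differences are minor: you verify well-definedness and unitarity of $U_D$ explicitly through the inner-product computation (a step the paper leaves implicit as a standard property of minimal unitary extensions), and you obtain the $D_i$-intertwining by passing to adjoints and commuting $D_i^*$ past $D_n^k$ via Fuglede--Putnam, where the paper instead invokes the $\Gamma_n$-unitary relation $D_n^*D_i=D_{n-i}^*$ to rewrite $D_iD_n^k$ as $D_n^{k+1}D_{n-i}^*$.
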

\begin{proof}
	Assume that $D_n^*,~{D_n^*}^{\prime}$ acting on $\mathscr{P}_{S_n^*},~ \mathscr{P}_{{S_n^{\prime}}^*}$, and the set of operators $\{{S_1}_{S^*_n},\dots,{S_{n-1}}_{S^*_n},\mathcal P_{S_n^*}\}$, $\{{S_1}_{{S_n^{\prime}}^*}, \dots,{S_{n-1}}_{{S_n^{\prime}}^*},\mathcal P_{{S_n^{\prime}}^*}\}$ are obtained from $(S_1,\dots,S_n)$, and $(S_1^{\prime},\dots,S_n^{\prime})$, respectively. 
	Since the tuples $(S_1,\dots,S_n)$ and $(S_1^{\prime},\dots,S_n^{\prime})$ 
	are unitarily equivalent through a unitary operator $U$, 
	it follows that $U$ intertwines $S_n$ with $S_n^{\prime}$, 
	and hence also intertwines their adjoints, that is, 
	$US_n^* = S_n^{\prime *}U.$
	In addition, $U$ intertwines $\mathcal{P}_{S_n^*}$ and 
	$\mathcal{P}_{S_n^{\prime *}}$, that is,
	$$U \mathcal{P}_{S_n^*} = \mathcal{P}_{S_n^{\prime *}} U.$$
	Consequently, $U$ has the following intertwining property, 
	\begin{equation}\label{inter-can}
		U({S_1}_{S_n^*},\dots,{S_{n-1}}_{S_n^*},\mathcal{P}_{S_n^*})=({S_1}_{{S_n^{\prime}}^*}, \dots,{S_{n-1}}_{{S_n^{\prime}}^*},\mathcal P_{{S_n^{\prime}}^*})U.
	\end{equation}
	From equation~\eqref{eq2.5}, it then follows that the operator $U_D$ intertwines 
	$D_n$ with $D_n^{\prime}$. To conclude that $(D_1,\dots,D_n)$ and $(D_1^{\prime},\dots,D_n^{\prime})$ are unitarily equivalent via $U_D$, it remains only to verify that for every $h \in \mathcal{H}$ and every $k \geq 0$.
	\begin{equation*}
		U_DD_iD_n^k\mathcal{P}_{S_n^*}h=D^{\prime}_iU_DD_n^k\mathcal{P}_{S_n^*}h, 
	\end{equation*} 
	for all $i=1,\dots,n-1$. Since $(D_1,\dots,D_n)$ is a $\Gamma_n$-unitary, and that $U_D$ intertwines 
	$D_n$ with $D_n^{\prime}$, it follows that for every $h\in \mathcal{H}$ and every $k\geq0$, we have
	\begin{align}\label{equality-1}
		U_DD_iD_n^k\mathcal{P}_{S_n^*}h&\nonumber=U_DD_n^kD_i\mathcal{P}_{S_n^*}h\\
		&\nonumber=U_DD_n^{k+1}D_n^*D_i\mathcal{P}_{S_n^*}h\\
		&\nonumber=U_DD_n^{k+1}D_{n-i}^*\mathcal{P}_{S_n^*}h\\
		&={D_n^{\prime}}^{k+1}U_DD_{n-i}^*\mathcal{P}_{S_n^*}h.
	\end{align}
	Note that the $\Gamma_n$-isometry $({S_1^*}_{S^*_n},\dots,{S_{n-1}^*}_{S^*_n},X^*_{S_n^*})$ acting on $\operatorname{Ran}\mathcal{P}_{S_n^*}$ admits a $\Gamma_n$-unitary extension $(D_1^*,\dots,D_n^*)$ acting on $\mathscr{P}_{S_n^*}\supseteq \operatorname{Ran}\mathcal{P}_{S_n^*}$. Therefore, by first applying \eqref{eq2.5} and then \eqref{inter-can}, we obtain 
	\begin{equation}\label{equality-2}
		{D_n^{\prime}}^{k+1}U_DD_{n-i}^*\mathcal{P}_{S_n^*}h=	{D_n^{\prime}}^{k+1}U{S_{n-i}^*}_{S_n^*}\mathcal{P}_{S_n^*}h
		={D_n^{\prime}}^{k+1}{S_{n-i}^*}_{{S_n^{\prime}}^*}\mathcal{P}_{{S_n^{\prime}}^*}Uh.		
	\end{equation}
	Moreover, the $\Gamma_n$-isometry $({S_1^*}_{S^{\prime*}_n},\dots,{S_{n-1}^*}_{S^{\prime*}_n},X^*_{S_n{\prime*}})$ acting on $\operatorname{Ran}\mathcal{P}_{S_n^{\prime*}}$ has a $\Gamma_n$-unitary extension $(D_1^{\prime*},\dots,D_n^{\prime*})$ acting on $\mathscr{P}_{S_n^{\prime*}}\supseteq \operatorname{Ran}\mathcal{P}_{S_n^{\prime*}}$. Hence, 
	\begin{equation}\label{equality-3}
	{D_n^{\prime}}^{k+1}{S_{n-i}^*}_{{S_n^{\prime}}^*}\mathcal{P}_{{S_n^{\prime}}^*}Uh	={D_n^{\prime}}^{k+1}{D_{n-i}^{\prime*}}\mathcal{P}_{{S_n^{\prime}}^*}Uh.
	\end{equation}
	Finally, using the fact that $(D_1^{\prime},\dots,D_n^{\prime})$ is the $\Gamma_n$-unitary and applying \eqref{eq2.5}, we get
	\begin{align}\label{equality-4}
		{D_n^{\prime}}^{k+1}{D_{n-i}^{\prime*}}\mathcal{P}_{{S_n^{\prime}}^*}Uh & \nonumber={{D^{\prime}}^*_{n-i}}{D_n^{\prime}}^{k+1}\mathcal{P}_{{S_n^{\prime}}^*}Uh\\
		&\nonumber=D^{\prime}_i{D_n^{\prime}}^k\mathcal{P}_{{S_n^{\prime}}^*}Uh\\
		&=D^{\prime}_iU_DD_n^k\mathcal{P}_{S_n^*}h
	\end{align}
	The identities in equation \eqref{equality-1}, \eqref{equality-2}, \eqref{equality-3}, and \eqref{equality-4} together yield $$U_DD_iD_n^k\mathcal{P}_{S_n^*}h=D^{\prime}_iU_DD_n^k\mathcal{P}_{S_n^*}h.$$ This completes the proof.
\end{proof}
	
\section{Douglas-Type Functional Model}
In this section, we develop a Douglas-type functional model for $\Gamma_n$-contractions. The construction relies on the fundamental operator tuple $(E_1, \dots, E_{n-1})$ associated with the adjoint tuple $(S_1^*, \dots, S_n^*)$.
%For $i =1,\dots,n-1$. Note that, using Theorem \ref{thm1.2} and Theorem \ref{thm1.3}, we obtain that there exist exactly one solution $(X_1,\dots,X_{n-1})=(E_1,\dots,E_{n-1})$ to the system of equations:
%\begin{equation}
%	S_i^*-S_{n-i}S_n^*=D_{S_n^*}E_iD_{S_n^*}~\text{and}~S_{n-i}^*-S_i S_n^*=D_{S_n^*}E_{n-i}D_{S_n^*},
%\end{equation}
%Equivalently, the fundamental operators $(E_1, \dots, E_{n-1})$ provide the unique solution to the alternative systems:
%\begin{equation}\label{eq3.20}
%	D_{S_n^*} S_i^* = X_i D_{S_n^*}+ X^*_{n-i} D_{S_n^*} S_n^*.
%\end{equation}
The proposition below establishes the necessary and sufficient conditions on the fundamental operators $E_1, \dots, E_{n-1}$ for which the tuple $(M_{\Phi_1}, \dots, M_{\Phi_{n-1}}, M_z)$ acting on the Hilbert space $H_{\mathcal{D}_{S_n^*}}^2(\mathbb{D})$, constitutes a pure $\Gamma_n$-isometry, where $\Phi_i(z)=E_i^*+E_{n-i}z$, for $z\in \mathbb{D}$. 

\begin{prop}\label{prop3.1}
	Let $(E_1, \dots, E_{n-1})$ be the fundamental operator tuple associated with the $\Gamma_n$-contraction $(S_1^*, \dots, S_n^*)$. The tuple $(M_{\Phi_1}, \dots, M_{\Phi_{n-1}},$ $M_z)$ acting on the Hilbert space $H_{\mathcal{D}_{S_n^*}}^2(\mathbb{D})$ is a pure $\Gamma_n$-isometry if and only if the tuple $(\gamma_1 M_{\Phi_1}, \dots, \gamma_{n-1} M_{\Phi_{n-1}})$ is a $\Gamma_{n-1}$-contraction and the fundamental operators $E_1, \dots, E_{n-1}$ satisfy the commutativity condition given in Equation~\eqref{eq1.1} and .
\end{prop}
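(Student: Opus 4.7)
My plan is to derive Proposition 3.1 directly from Theorem \ref{thm1.4} together with Proposition \ref{eg1.5}, exploiting the fact that on $H^2_{\mathcal{D}_{S_n^*}}(\mathbb{D})$ the operator $M_z$ is automatically a pure isometry, so the ``pure'' adjective comes for free once the $\Gamma_n$-isometry property is established. Thus the real content is to translate the characterization in Theorem \ref{thm1.4}(2) into conditions on the symbols $\Phi_i(z)=E_i^*+zE_{n-i}$.

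For the forward direction, I would assume that $(M_{\Phi_1},\dots,M_{\Phi_{n-1}},M_z)$ is a pure $\Gamma_n$-isometry. Then by Theorem \ref{thm1.4}(2), the subtuple $(\gamma_1M_{\Phi_1},\dots,\gamma_{n-1}M_{\Phi_{n-1}})$ is a $\Gamma_{n-1}$-contraction (this is exactly the first conclusion we want). Moreover, since a $\Gamma_n$-isometry is by definition a commuting tuple, Proposition \ref{eg1.5} applies: the commutation relations $[M_{\Phi_i},M_{\Phi_j}]=0$ and $[M_{\Phi_i},M_z]=0$ force the fundamental operators $E_1,\dots,E_{n-1}$ to satisfy the commutativity condition \eqref{eq1.1}. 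The identities $M_{\Phi_i}=M_{\Phi_{n-i}}^*M_z$ required by Theorem \ref{thm1.4}(2) are automatic from the specific form $\Phi_i(z)=E_i^*+zE_{n-i}$, as recorded in Proposition \ref{eg1.5}.

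For the reverse direction, I would assume that $(\gamma_1M_{\Phi_1},\dots,\gamma_{n-1}M_{\Phi_{n-1}})$ is a $\Gamma_{n-1}$-contraction and that the $E_i$'s satisfy \eqref{eq1.1}. Proposition \ref{eg1.5} then yields that the tuple $(M_{\Phi_1},\dots,M_{\Phi_{n-1}},M_z)$ is commuting and satisfies $M_{\Phi_i}=M_{\Phi_{n-i}}^*M_z$ for $i=1,\dots,n-1$. Combined with the fact that $M_z$ on $H^2_{\mathcal{D}_{S_n^*}}(\mathbb{D})$ is an isometry, the three hypotheses of Theorem \ref{thm1.4}(2) are met, so $(M_{\Phi_1},\dots,M_{\Phi_{n-1}},M_z)$ is a $\Gamma_n$-isometry. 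Finally, since $M_z$ on $H^2_{\mathcal{D}_{S_n^*}}(\mathbb{D})$ is a pure isometry (its powers tend strongly to zero), the tuple is in fact a \emph{pure} $\Gamma_n$-isometry by Definition \ref{def:Gamma_n}.

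I do not anticipate a genuine obstacle here; the proof is essentially a bookkeeping exercise assembling Theorem \ref{thm1.4} and Proposition \ref{eg1.5}. The only subtle point is making sure the hypothesis ``$(\gamma_1M_{\Phi_1},\dots,\gamma_{n-1}M_{\Phi_{n-1}})$ is a $\Gamma_{n-1}$-contraction'' is used in its own right and is not automatic from the commutativity condition alone — it supplies the norm control on polynomial expressions in $M_{\Phi_1},\dots,M_{\Phi_{n-1}}$ that Theorem \ref{thm1.4}(2) needs. Once that is flagged, the two implications follow line by line.
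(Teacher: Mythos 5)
Your proposal is correct and follows essentially the same route as the paper's own proof: both directions are obtained by assembling Theorem \ref{thm1.4} (characterization of $\Gamma_n$-isometries) with the commutativity condition of Proposition \ref{eg1.5}. In fact you are slightly more careful than the paper, which never explicitly addresses the adjective \emph{pure}; your observation that $M_z$ on $H^2_{\mathcal{D}_{S_n^*}}(\mathbb{D})$ is automatically a pure isometry, so purity comes for free, fills that small gap.
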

\begin{proof}
	
	Assume that the operator tuple $(M_{\Phi_1}, \dots, M_{\Phi_{n-1}}, M_z)$ acting on the Hilbert space $H_{\mathcal{D}_{S_n^*}}^2(\mathbb{D})$ is a $\Gamma_n$-isometry. Then, by Theorem~\ref{thm1.4},  $(M_{\Phi_1},\dots,M_{\Phi_{n-1}},M_z)$ is commutative and $(\gamma_1 M_{\Phi_1}, \dots, \gamma_{n-1} M_{\Phi_{n-1}})$ is a $\Gamma_{n-1}$-contraction. As a result, the fundamental operators $(E_1, \dots, E_{n-1})$ associated with the adjoint tuple $(S_1^*, \dots, S_n^*)$ must satisfy the commutativity condition in equation~\eqref{eq1.1}. Conversely, suppose that the fundamental operators $(E_1, \dots, E_{n-1})$ satisfy the commutativity condition~\eqref{eq1.1}. Then the operators $M_{\Phi_1}, \dots, M_{\Phi_{n-1}}, M_z$ are mutually commuting,  $M_{\Phi_i}=M_{\Phi_{n-i}}^*M_z$. Consequently, the assumption that $(\gamma_1 M_{\Phi_1}, \dots, \gamma_{n-1} M_{\Phi_{n-1}})$ is a $\Gamma_{n-1}$- contraction implies that $(M_{\Phi_1},\dots,M_{\Phi_{n-1}},M_z)$ on $H_{\mathcal{D}_{S_n^*}}^2(\mathbb{D})$ is a $\Gamma_n$-isometry.
\end{proof}
Now, we develop the Douglas type functional model for $\Gamma_n$-contraction. Since $S_n$ is a contraction, for each $h\in \mathcal{H}$ we define $\mathscr{D}_{D_{S_n^*},S_n^*}:\mathcal{H}\rightarrow H_{\mathcal{D}_{S_n^*}}^2(\mathbb{D})$ by
\begin{equation}\label{eq3.24}
	\mathscr{D}_{D_{S_n^*},S_n^*}(z)h:=\sum_{k=0}^{\infty}z^nD_{S_n^*}{S_n^*}^kh=D_{S_n^*}(I-zS_n^*)^{-1}h,
\end{equation}
and $\mathscr{E}_D:\mathcal H\rightarrow H^2(\mathcal{D}_{S_n^*})\oplus
\mathscr{P}_{S_n^*}$ by 
\begin{equation}\label{eq3.25}
	\mathscr{E}_Dh=\begin{bmatrix}
		\mathscr{D}_{D_{S_n^*},S_n^*}(z)h\\
		\mathcal{P}_{S_n^*}h
	\end{bmatrix}~\text{for all}~h\in \mathcal{H}.
\end{equation}
The map $\mathscr{E}_D$ is an isometry. Moreover, by the intertwining property of $\mathscr{D}_{D_{S_n^*},S_n^*}$ and $\mathcal{P}_{S_n^*}$, it follows that 
\begin{align}\label{eq3.26}
	\mathscr{E}_DS_n^*=\begin{bmatrix}
		\mathscr{D}_{D_{S_n^*},S_n^*}S_n^*\\ 
		\mathcal{P}_{S_n^*}S_n^*
	\end{bmatrix}&=\begin{bmatrix}
		M_z^*\mathscr{D}_{D_{S_n^*},S_n^*}\\ \notag
		\mathcal{P}_{S_n^*}S_n^*
	\end{bmatrix}\\ &=\begin{bmatrix}
		M_z^*\mathscr{D}_{D_{S_n^*},S_n^*}\\ \notag
		X^*_{S_n^*}\mathcal{P}_{S_n^*}
	\end{bmatrix}\\ &=\begin{bmatrix}
		M_z^*\mathscr{D}_{D_{S_n^*},S_n^*}\\
		D_n^*\mathcal{P}_{S_n^*}
	\end{bmatrix}=\begin{bmatrix}
		M_z & 0\\
		0 & D_n
	\end{bmatrix}^*\mathscr{E}_D.
\end{align}
Hence, $\begin{bmatrix}
	M_z & 0\\
	0 & D_n
\end{bmatrix}$ is an isometric dilation of $S_n$ corresponding to the isometric embedding operator $\mathscr{E}_D$. This construction originates from the work of Douglas (see \cite{10}), where it was established that the dilation is minimal. In the subsequent theorem, the lift is expressed in a functional form aligned with the coordinate system of the Douglas model. 
%As discussed earlier, under suitable assumption a $\Gamma_n$-isometry extends to a $\Gamma_n$-unitary(see Corollary \ref{cor2.4}). Moreover, in view of Proposition \ref{prop3.1}, we shall adopt the following assumption throughout the remainder of  paper:
%\newline \textit{Assumption}: The functions $\Phi_1,\dots,\Phi_{n-1}\in H^{\infty}(\mathcal{B}(\mathcal{D}_{S_n^*}))$ are the bounded analytic functions obtained in Theorem \ref{thm2.3} such that $(\gamma_1 M_{\Phi_1}, \dots, \gamma_{n-1} M_{\Phi_{n-1}})$ is a $\Gamma_{n-1}$-contraction.

\begin{thm}\label{thm3.2}
	\textit{(Douglas-Type Functional Model)} Let $(S_1,\dots,S_n)$ be a $\Gamma_n$-contraction on a Hilbert space, and $(E_1,\dots,E_{n-1})$ be the fundamental operators of $(S_1^*,\dots,S_n^*)$ on $\mathcal{H}$ that satisfies the commutativity condition \eqref{eq1.1}. Let $\Phi_1,\dots,\Phi_{n-1}\in H^{\infty}(\mathcal{B}(\mathcal{D}_{S_n^*}))$ be obtained as in Theorem \ref{thm2.3} such that $(\gamma_1 M_{\Phi_1}, \dots, \gamma_{n-1} M_{\Phi_{n-1}})$ is a $\Gamma_{n-1}$-contraction. If $(D_1,\dots,D_n)$ be the $\Gamma_n$-unitary canonically constructed from $(S_1,\dots,S_n)$. Then $(S_1,\dots,S_n)$ is jointly equivalent to $P_{\mathcal{H}_{D}}(\hat{V_1},\dots,\hat{V_n})|_{\mathcal{H}_{D}}$, where $(\hat{V_1},\dots,\hat{V_n})$ is equal to  
	\begin{equation*}
		\left( \begin{bmatrix}
			M_{\Phi_1} & 0\\
			0 & D_1
		\end{bmatrix},\dots,\begin{bmatrix}
			M_{\Phi_{n-1}} & 0\\
			0 & D_{n-1}
		\end{bmatrix},\begin{bmatrix}
			M_z & 0\\
			0 & D_n 
		\end{bmatrix} \right),
	\end{equation*} 
	and $\mathcal{H}_{D}$ is the functional model space given by $\mathcal{H}_{D}=\operatorname{Ran}\mathscr{E}_D$. For this particular scenario, the operator tuple
	\begin{equation}\label{eq-modeld}
		\left( 
		\begin{bmatrix}
			M_{\Phi_1} & 0 \\
			0 & D_1
		\end{bmatrix},
		\dots,
		\begin{bmatrix}
			M_{\Phi_{n-1}} & 0 \\
			0 & D_{n-1}
		\end{bmatrix}
		\right)
	\end{equation}
	acting on the Hilbert space $ H_{\mathcal{D}_{S_n^*}}^2(\mathbb{D})\oplus
	\mathscr{P}_{S_n^*}$ 
	is uniquely determined once a minimal isometric dilation of 
	$S_n$ is fixed, given by the block operator
	$$\begin{bmatrix}
		M_z & 0 \\
		0 & D_n
	\end{bmatrix}.$$
\end{thm}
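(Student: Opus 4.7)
The plan is to show that the embedding $\mathscr{E}_D$ defined in \eqref{eq3.25} simultaneously intertwines each $S_i^*$ with $\hat{V}_i^*$, so that $\mathcal{H}_D=\operatorname{Ran}\mathscr{E}_D$ becomes a joint co-invariant subspace for $(\hat V_1,\dots,\hat V_n)$ and $\mathscr{E}_D$ realises the claimed unitary equivalence. The case $i=n$ is already recorded in equation \eqref{eq3.26}, and the fact that $\mathscr{E}_D$ is an isometry follows from the standard telescoping identity
\[
\sum_{k=0}^{\infty}\bigl(\|{S_n^*}^kh\|^2-\|{S_n^*}^{k+1}h\|^2\bigr)+\lim_k\|{S_n^*}^kh\|^2=\|h\|^2,
\]
combined with $\|\mathcal{P}_{S_n^*}h\|^2=\lim_k\|{S_n^*}^kh\|^2$. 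Thus the remaining task is to verify $\hat V_i^*\mathscr{E}_D=\mathscr{E}_DS_i^*$ for each $i=1,\dots,n-1$.

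The second (unitary) block is essentially definitional: by construction $D_i^*$ restricted to $\overline{\operatorname{Ran}}\,\mathcal{P}_{S_n^*}$ coincides with ${S_i^*}_{S_n^*}$, so using \eqref{eq-def} we obtain $D_i^*\mathcal{P}_{S_n^*}h={S_i^*}_{S_n^*}\mathcal{P}_{S_n^*}h=\mathcal{P}_{S_n^*}S_i^*h$. The main calculation is the first block, namely $M_{\Phi_i}^*\mathscr{D}_{D_{S_n^*},S_n^*}h=\mathscr{D}_{D_{S_n^*},S_n^*}S_i^*h$. I would expand $M_{\Phi_i}^*=M_{E_i}+M_{E_{n-i}^*}M_z^*$ (coming from $\Phi_i(z)=E_i^*+zE_{n-i}$) and apply it term-by-term to the Taylor series $\mathscr{D}_{D_{S_n^*},S_n^*}(z)h=\sum_{k\geq 0}z^kD_{S_n^*}{S_n^*}^kh$, obtaining
\[
M_{\Phi_i}^*\mathscr{D}_{D_{S_n^*},S_n^*}(z)h=\sum_{k=0}^{\infty}z^k\bigl(E_iD_{S_n^*}+E_{n-i}^*D_{S_n^*}S_n^*\bigr){S_n^*}^kh.
\]
Applying Theorem \ref{thm1.3} to the $\Gamma_n$-contraction $(S_1^*,\dots,S_n^*)$ gives the fundamental identity $E_iD_{S_n^*}+E_{n-i}^*D_{S_n^*}S_n^*=D_{S_n^*}S_i^*$, and since $S_i^*$ commutes with $S_n^*$, the right-hand side rearranges to $\sum_{k\geq0}z^kD_{S_n^*}{S_n^*}^kS_i^*h=\mathscr{D}_{D_{S_n^*},S_n^*}(z)S_i^*h$. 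This is the step I expect to require the most bookkeeping, but it is essentially a clean term-wise application of the fundamental equation.

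With both blocks verified, $\hat V_i^*\,\mathcal{H}_D\subseteq\mathcal{H}_D$ for every $i$, so $\mathcal{H}_D$ is coinvariant for $(\hat V_1,\dots,\hat V_n)$. Writing $U=\mathscr{E}_D:\mathcal{H}\to\mathcal{H}_D$, which is unitary onto its range, the intertwining $\hat V_i^*U=US_i^*$ yields $S_i=U^*\bigl(P_{\mathcal{H}_D}\hat V_i|_{\mathcal{H}_D}\bigr)U$, which is the required joint unitary equivalence.

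For the uniqueness assertion, once a minimal isometric dilation of $S_n$ is fixed of the form $\begin{bmatrix}M_z & 0\\ 0 & D_n\end{bmatrix}$, the wandering subspace $\mathcal{D}_{S_n^*}$ and the space $\mathscr{P}_{S_n^*}$ carrying the minimal unitary extension of $X_{S_n^*}$ are determined. On the first summand the symbol $\Phi_i(z)=E_i^*+zE_{n-i}$ is uniquely determined because $(E_1,\dots,E_{n-1})$ are the fundamental operators of $(S_1^*,\dots,S_n^*)$, which are unique by Theorem \ref{thm1.2}. On the second summand, once $D_n$ is fixed, the block $(D_1,\dots,D_{n-1})$ is determined by the canonical construction of the $\Gamma_n$-unitary (unique up to the intertwining $U_D$ established in the preceding theorem, which reduces to the identity when $D_n$ itself is pinned down), completing the proof.
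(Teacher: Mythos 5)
Your intertwining half of the proof is correct and is essentially the paper's own argument: the $n$-th component is equation \eqref{eq3.26}, the relation $D_i^*\mathcal{P}_{S_n^*}=\mathcal{P}_{S_n^*}S_i^*$ is definitional from the canonical construction, and the Hardy-space block reduces to the fundamental equation $D_{S_n^*}S_i^*=E_iD_{S_n^*}+E_{n-i}^*D_{S_n^*}S_n^*$ of Theorem \ref{thm1.3} applied to $(S_1^*,\dots,S_n^*)$, combined with commutativity of $S_i^*$ and $S_n^*$ (the paper computes with the resolvent $(I-zS_n^*)^{-1}$, you with Taylor coefficients; these are the same computation). One secondary omission: you never verify that $(\hat V_1,\dots,\hat V_n)$ is actually a $\Gamma_n$-isometry, which is where the hypotheses on the commutativity condition \eqref{eq1.1} and on $(\gamma_1 M_{\Phi_1},\dots,\gamma_{n-1}M_{\Phi_{n-1}})$ being a $\Gamma_{n-1}$-contraction are consumed; the paper does this via Proposition \ref{prop3.1} and Lemma \ref{lem2.2}.

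The uniqueness paragraph, however, has a genuine gap: it is circular. What must be shown is that if $(\tilde V_1,\dots,\tilde V_{n-1})$ is \emph{any} tuple on $H^2_{\mathcal{D}_{S_n^*}}(\mathbb{D})\oplus\mathscr{P}_{S_n^*}$ making $(\tilde V_1,\dots,\tilde V_{n-1},\hat V_n)$, with $\hat V_n=\begin{bmatrix} M_z & 0\\ 0 & D_n\end{bmatrix}$ fixed, a $\Gamma_n$-isometric dilation of $(S_1,\dots,S_n)$ through $\mathscr{E}_D$, then necessarily $\tilde V_i=\begin{bmatrix} M_{\Phi_i} & 0\\ 0 & D_i\end{bmatrix}$. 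Your argument instead asserts that the blocks ``are determined by the canonical construction'' and the symbols ``are uniquely determined because the fundamental operators are unique,'' which presupposes that the competing tuple already has the canonical form --- precisely what is in question. The paper supplies the three steps you are missing: (i) any such $\tilde V_i$ is block diagonal, $\tilde V_i=\begin{bmatrix} M_{\tilde\Phi_i} & 0\\ 0 & \tilde V^{(i)}_{22}\end{bmatrix}$ with $\tilde\Phi_i=\tilde E_i^*+z\tilde E_{n-i}$, by running the commutation argument of Theorem \ref{thm2.3} against the fixed $n$-th component; (ii) from the intertwining $\mathcal{P}_{S_n^*}S_i^*=\tilde V_{22}^{(i)*}\mathcal{P}_{S_n^*}$, the commutation of $\tilde V^{(i)}_{22}$ with the unitary $D_n$, equation \eqref{eq-def}, and the density of $\{D_n^k\mathcal{P}_{S_n^*}h: k\geq 0,\ h\in\mathcal{H}\}$ in $\mathscr{P}_{S_n^*}$, one concludes $\tilde V^{(i)}_{22}=D_i$; (iii) from the intertwining $\mathscr{D}_{D_{S_n^*},S_n^*}S_i^*=M_{\tilde\Phi_i}^*\mathscr{D}_{D_{S_n^*},S_n^*}$, comparing Taylor coefficients yields $D_{S_n^*}S_i^*=\tilde E_iD_{S_n^*}+\tilde E_{n-i}^*D_{S_n^*}S_n^*$, and only at this point does the uniqueness in Theorems \ref{thm1.2} and \ref{thm1.3} force $\tilde E_i=E_i$, hence $\tilde\Phi_i=\Phi_i$. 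Uniqueness of the fundamental operators is the final step of this argument, not a substitute for it.
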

\begin{proof}
	%		Let $(S_1,\dots,S_n)$ be a $\Gamma_n$-contraction then $S_n$ is a contraction on $\mathcal H$. Hence, using Douglas Model for the contraction $S_n$, there is an isometric embedding operator $\mathscr{E}$ that intertwines $S_n^*$ and $\begin{bmatrix}
		%			M_z & 0\\
		%			0 & D
		%		\end{bmatrix}^*$, where $\mathscr{E}:\mathcal H\rightarrow 
	%		\begin{bmatrix}
		%		H^2(\mathcal{D}_{S_n^*})\\
		%		\mathscr{P}_{S_n^*}
		%		\end{bmatrix}$ given by \begin{equation}\label{eq3.17}
		%		\mathscr{E}h=\begin{bmatrix}
			%			\mathscr{D}_{D_{S_n^*},S_n^*}(z)h\\
			%			\mathcal{P}_{S_n^*}h
			%		\end{bmatrix}~\text{for all}~h\in \mathcal{H}.
		%		\end{equation}
	%		Now, 
	Consider $(S_1,\dots,S_n)$ is a $\Gamma_n$-contraction on the Hilbert space $\mathcal{H}$. Let $(E_1,\dots,E_{n-1})$ be the fundamental operator tuple associated with the adjoint tuple $(S_1^*,\dots,S_n^*)$, and let $(D_1,\dots, D_n)$, acting on $\mathscr{P}_{S_n^*}$ be the canonical $\Gamma_n$-unitary constructed from the $\Gamma_n$-contraction $(S_1,\dots,S_n)$. We claim that the isometric embedding operator  
	$\mathscr{E}_D$ defined in equation \eqref{eq3.25} will intertwine the operator tuple $(S_1^*,\dots,S_n^*)$ and $(\hat{V_1}^*,\dots,\hat{V_n}^*)$, that is, 
	\begin{equation}\label{eq3.28}
		\mathscr{E}_D(S_1^*,\dots,S_n^*)=(\hat{V_1}^*,\dots,\hat{V_n}^*)\mathscr{E}_D.
	\end{equation} 
	To establish the intertwining relation stated in \eqref{eq3.28}, we verify the following intertwining relations
	\begin{equation*}
		\mathscr{D}_{D_{S_n^*},S_n^*}(S_1^*,\dots,S_n^*)=(M_{\Phi_1}^*,\dots,M_{\Phi_{n-1}}^*,M_z^*)\mathscr{D}_{D_{S_n^*},S_n^*},
	\end{equation*}
	and
	\begin{equation*}
		\mathcal{P}_{S_n^*}(S_1^*,\dots,S_n^*)=(D_1^*,\dots,D_n^*)\mathcal{P}_{S_n^*}. 
	\end{equation*}
	Note that, the equalities $\mathscr{D}_{D_{S_n^*},S_n^*}S_n^*=M_z^*\mathscr{D}_{D_{S_n^*},S_n^*}$ and $\mathcal{P}_{S_n^*}S_n^*=D_n^*\mathcal{P}_{S_n^*}$ follows from the fact that $\begin{bmatrix}
		M_z & 0\\
		0 & D_n
	\end{bmatrix}$ is the Douglas minimal isometric dilation of $S_n$ with the isometric embedding operator $\mathscr{E}_D$ (see equation \eqref{eq3.26}). 
	Hence, to complete the verification of intertwining relation in equation \eqref{eq3.28}, it remains only to establish that for each index $i=1,\dots,n-1$, the following relations hold
	\begin{equation}\label{eq3.31}
		\mathscr{D}_{D_{S_n^*},S_n^*}S_i^*=M^*_{\Phi_i}\mathscr{D}_{D_{S_n^*},S_n^*},
	\end{equation}
	and
	\begin{equation}\label{eq3.32}
		\mathcal{P}_{S_n^*}S_i^*=D_i^*\mathcal{P}_{S_n^*}.
	\end{equation}
	Since the tuple $(D_1,\dots, D_n)$, acting on $\mathscr{P}_{S_n^*}$, is the canonical $\Gamma_n$-unitary associated with the $\Gamma_n$-contraction $(S_1,\dots,S_n)$, equation \eqref{eq3.32} follows directly from construction. Consequently, it remains to verify equation \eqref{eq3.31}. By simultaneously using \eqref{eq3.24} and Theorem \ref{thm1.3}, we obtain \begin{align*}
		M^*_{\Phi_i}\mathscr{D}_{D_{S_n^*},S_n^*} &=E_i D_{S_n^*}(I-zS_n^*)^{-1}+E_{n-i}^* D_{S_n^*}(I-zS_n^*)^{-1} S_n^*\\ 
		&=(E_iD_{S_n^*}+E_{n-i}^* D_{S_n^*}S_n^*)(I-zS_n^*)^{-1}\\ 
		&=D_{S_n^*}S_i^*(I-zS_n^*)^{-1}\\ 
		&=D_{S_n^*}(I-zS_n^*)^{-1}S_i^*\\ 
		&=\mathscr{D}_{D_{S_n^*},S_n^*} S_i^* ,
	\end{align*}
	for all $i=1,\dots,n-1$. This establishes that the isometric embedding operator $\mathscr{E}_D$ intertwines the operator tuple $(S_1^*,\dots,S_n^*)$ and $(\hat{V_1}^*,\dots,\hat{V_n}^*)$. We now show that the tuple $(\hat{V_1},\dots,\hat{V_n})$ is a $\Gamma_n$-isometry. By assumption, $(\gamma_1M_{\Phi_1},\dots,\gamma_{n-1}M_{\Phi_{n-1}})$ is a $\Gamma_{n-1}$-contraction, and the fundamental operators $E_1,\dots,E_{n-1}$ associated with $(S_1^*,\dots,S_n^*)$ satisfy the commutativity condition \eqref{eq1.1}. Therefore, applying Proposition \ref{prop3.1}, we conclude that the tuple $(M_{\Phi_1},\dots,M_{\Phi_{n-1}},M_z)$ is a $\Gamma_n$-isometry. Since $(D_1,\dots,D_n)$ is the canonical $\Gamma_n$-unitary associated with $(S_1,\dots,S_n)$, Lemma \ref{lem2.2}, implies that the tuple $(\hat{V_1},\dots,\hat{V_n})$ is a $\Gamma_n$ isometry.  Furthermore, if we assume that $\mathcal{H}_D$ is the functional model given by $\mathcal{H}_{D}=\operatorname{Ran}\mathscr{E}_D$, then equation \eqref{eq3.28} implies that $(S_1,\dots,S_n)$ is jointly unitary  equivalent to \begin{equation*}
		P_{\mathcal{H}_D}\left( \begin{bmatrix}
			M_{\Phi_1} & 0\\
			0 & D_1
		\end{bmatrix},\dots,\begin{bmatrix}
			M_{\Phi_{n-1}} & 0\\
			0 & D_{n-1}
		\end{bmatrix},\begin{bmatrix}
			M_z & 0\\
			0 & D_n 
		\end{bmatrix} \right)\Bigg|_{\mathcal{H}_D}.
	\end{equation*}
	
	We now demonstrate that the operator tuple in equation \eqref{eq-modeld} is uniquely determined. For this, assume that \begin{equation*}
		(\tilde{V}_1,\dots,\tilde{V}_{n-1},\hat{V_n}):=\left( \begin{bmatrix}
			\tilde{V}^{(1)}_{11} & \tilde{V}^{(1)}_{12}\\
			\tilde{V}^{(1)}_{21} & \tilde{V}^{(1)}_{22}
		\end{bmatrix},\dots,\begin{bmatrix}
			\tilde{V}^{(n-1)}_{11} & \tilde{V}^{(n-1)}_{12}\\
			\tilde{V}^{(n-1)}_{21} & \tilde{V}^{(n-1)}_{22}
		\end{bmatrix},\begin{bmatrix}
			M_z & 0\\
			0 & D_n 
		\end{bmatrix}\right)
	\end{equation*} 
	acting on $H_{\mathcal{D}_{S_n^*}}^2(\mathbb{D})\oplus
	\mathscr{P}_{S_n^*}$ is the $\Gamma_n$-isometric dilation of the $\Gamma_n$-contraction $(S_1,\dots,S_n)$. Next we proceed as in the Theorem \ref{thm2.3} to obtain the following block diagonal form of the tuple \begin{equation*}
		(\tilde{V}_1,\dots,\tilde{V}_{n-1})=
		\left( \begin{bmatrix}
			M_{\tilde{\Phi}_1} & 0\\
			0 & \tilde{V}^{(1)}_{22}
		\end{bmatrix},\dots,\begin{bmatrix}
			M_{\tilde{\Phi}_{n-1}} & 0\\
			0 & \tilde{V}^{(n-1)}_{22}
		\end{bmatrix}\right)
	\end{equation*} where $(\tilde{V}^{(1)}_{22},\dots,\tilde{V}^{(n-1)}_{22},D_n)$ is a $\Gamma_n$-unitary, $\tilde{\Phi}_i=\tilde{E}_i^*+z\tilde{E}_{n-i} \in H^{\infty}(\mathcal{B}(\mathcal{D}_{S_n^*}) $ for all $i=1,\dots,n-1$, and for some operators $\tilde{E}_1,\dots,\tilde{E}_{n-1}~\in \mathcal{B}(\mathcal D_{S_n^*})$. Furthermore, since the tuple $(\tilde{V}_1,\dots,\tilde{V}_{n-1},\hat{V_n})$ is an isometric dilation of $(S_1,\dots,S_n)$, it satisfies the following intertwining condition 
	$$\begin{bmatrix}
		\mathscr{D}_{D_{S_n^*},S_n^*}\\
		\mathcal{P}_{S_n^*}
	\end{bmatrix}S_i^*=\begin{bmatrix}
		M_{\tilde{\Phi}_i} & 0\\
		0 & {\tilde{V}_{22}}^{(i)}
	\end{bmatrix}^* \begin{bmatrix}
		\mathscr{D}_{D_{S_n^*},S_n^*}\\
		\mathcal{P}_{S_n^*}
	\end{bmatrix},$$
	for all $i=1,\dots,n-1$. Equivalently, we can write
	\begin{equation}\label{eq3.36}
		\mathscr{D}_{D_{S_n^*},S_n^*}(S_1,\dots,S_n)^*=(M_{\tilde{\Phi}_1},\dots,M_{\tilde{\Phi}_{n-1}},M_z)^*\mathscr{D}_{D_{S_n^*},S_n^*}
	\end{equation}
	and
	\begin{equation}\label{eq3.37}
		\mathcal{P}_{S_n^*}(S_1,\dots,S_n)^*=({{\tilde{V}^{(1)}}_{22}},\dots,{{\tilde{V}}^{(n-1)}_{22}},\hat{V_n})^*\mathcal{P}_{S_n^*}.
	\end{equation} 
	First we claim that $({{\tilde{V}^{(1)}}_{22}},\dots,{{\tilde{V}}^{(n-1)}_{22}})=(D_1,\dots,D_{n-1})$. Note that the operator ${\tilde{V}^{(i)}}_{22}$ commutes with $D_n$ for all $i=1,\dots,n-1$. Since $D_n$ is unitary, it follows that ${\tilde{V}^{(i)}}_{22}$ also commutes with $D_n^*$ for all $i=1,\dots,n-1.$ Consequently, ${\tilde{V}^{(i)*}}_{22}$ commutes with $D_n$ for all $i=1,\dots,n-1.$ Utilizing equation \eqref{eq-def}, and \eqref{eq3.37}, we have 
	\begin{align*}
		{{\tilde{V}^{(i)*}}_{22}}(D_n^k\mathcal{P}_{S_n^*}h)&=D_n^k{{\tilde{V}^{(i)*}}_{22}}\mathcal{P}_{S_n^*}h=D_n^k\mathcal{P}_{S_n^*}S_i^*h\\
		&=D_n^kD_i^*\mathcal{P}_{S_n^*}h=D_i^*(D_n^k\mathcal{P}_{S_n^*}h)
	\end{align*}
	for all $i=1,\dots,n-1.$ Since the set $\{D_n^k\mathcal{P}_{S_n^*}h~:~k\geq0~\text{and}~h\in \mathcal{H}\}$ is dense in $\mathscr{P}_{S_n^*}$, it follows that ${\tilde{V}^{(i)}}_{22}=D_i$ for all $i=1,\dots,n-1$. This establishes the desired claim that $({{\tilde{V}^{(1)}}_{22}},\dots,{{\tilde{V}}^{(n-1)}_{22}})=(D_1,\dots,D_{n-1})$. Next we show that $\tilde{E}_1,\dots,\tilde{E}_{n-1}$ are the fundamental operators of $(S_1^*,\dots,S_n^*)$. Invoking the power series expansion of $\mathscr{D}_{D_{S_n^*},S_n^*}$ and applying equation \eqref{eq3.36}, we deduce the identity
	\begin{equation*}
		D_{S_n^*} S_i^* = \tilde{E}_i D_{S_n^*}+ \tilde{E}^*_{n-i} D_{S_n^*} S_n^*
	\end{equation*}
	for all $i=1,\dots,n-1$. This proves that $\tilde{E}_1,\dots,\tilde{E}_{n-1}$ are the fundamental operators for the $\Gamma_n$-contraction $(S_1^*,\dots, S_n^*)$. But the uniqueness of the fundamental operators guarantees that the tuple $(\tilde{E}_1,\dots,\tilde{E}_{n-1})=(E_1,\dots,E_{n-1})$. This completes the proof.
\end{proof}

\section{Factorization Theorem}
In this section, we establish various factorization theorems that facilitate the connection between an arbitrary minimal isometric dilation and an arbitrary isometric dilation of a given contraction.
\begin{thm}\label{thm4.1}
	Let $T$ be a contraction acting on the Hilbert space $\mathcal{H}$. Assume that $V$ acting on the Hilbert space $\mathcal{K}$ is an isometric dilation of $T$ corresponding to an isometry $\mathscr E:\mathcal{H}\rightarrow \mathcal{K}$ and $V_m$ acting on the Hilbert space $\mathcal{K}_m$ be any minimal isometric dilation of $T$ corresponding to an isometry $\mathscr E_m:\mathcal{H}\rightarrow \mathcal{K}_m$. Then there exist a unique isometry $\Xi\in \mathcal{B}(\mathcal{K}_m,\mathcal{K})$ such that 
	\begin{equation*}\label{eq4.38}
		\mathscr E=\Xi\mathscr E_m \quad \text{and} \quad \Xi V_m^*=V^* \Xi.
	\end{equation*} 
	% 	Furthermore, if $\mathcal{K}_1=H^2_{\mathcal{E}}(\mathbb{D})\oplus \mathcal{K}_u$ and $V_1=M_z\oplus U$ be the Wold decomposition of $V_1$ for some unitary $U\in \mathcal{B}(K_u)$. Then 
	% 	\begin{equation}\label{eq4.39}
		% 		\Phi=(I_{H^2(\mathbb{D})}\otimes \tilde{V}_1)\oplus \tilde{V}_2
		% 	\end{equation} 
	% 	for some isometry $\tilde{V}_1\in \mathcal{B}(\mathcal{D}_{\mathcal{P}^*},\mathcal E)$ and isometry $\tilde{V}_2\in \mathcal{B}(\overline{\Delta_PL^2_{D_P}(\mathbb{T})},\mathcal K_u)$.
\end{thm}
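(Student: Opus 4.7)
The plan is to exploit the minimality of $V_m$. Since $V_m$ is minimal, $\mathcal{K}_m = \overline{\operatorname{Span}}\{V_m^n\mathscr{E}_m h : n\geq 0,\, h\in\mathcal{H}\}$, so I would define $\Xi$ on this spanning set by
\[
\Xi\bigl(V_m^n\mathscr{E}_m h\bigr) := V^n\mathscr{E} h.
\]
Well-definedness and isometry will follow from an inner product calculation: the intertwining relations $V_m^*\mathscr{E}_m = \mathscr{E}_m T^*$ and $V^*\mathscr{E} = \mathscr{E} T^*$ iterate to give $\mathscr{E}_m^* V_m^n \mathscr{E}_m = T^n = \mathscr{E}^* V^n \mathscr{E}$, and consequently both $\langle V_m^n\mathscr{E}_m h, V_m^k\mathscr{E}_m h'\rangle$ and $\langle V^n\mathscr{E} h, V^k\mathscr{E} h'\rangle$ reduce to $\langle T^{n-k}h, h'\rangle$ when $n\geq k$ (with the symmetric analogue otherwise). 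Therefore $\Xi$ extends uniquely to an isometry $\mathcal{K}_m\to\mathcal{K}$.

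The two required properties then drop out of the construction. Taking $n=0$ in the definition gives $\mathscr{E} = \Xi\mathscr{E}_m$. For the intertwining $\Xi V_m^* = V^*\Xi$, I would apply both operators to the spanning vector $V_m^n\mathscr{E}_m h$: when $n\geq 1$ both sides produce $V^{n-1}\mathscr{E} h$ (since $V$ is an isometry and $V_m^*V_m^n = V_m^{n-1}$), and when $n=0$ both reduce to $\mathscr{E} T^*h = V^*\mathscr{E} h$ by the adjoint dilation relations; hence the equality holds on a dense subset of $\mathcal{K}_m$ and extends by continuity.

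For uniqueness, suppose $\Xi'\in\mathcal{B}(\mathcal{K}_m,\mathcal{K})$ is another isometry satisfying both conditions. Taking adjoints of $\Xi' V_m^* = V^*\Xi'$ yields $V_m\Xi'^* = \Xi'^* V$, which iterates to $V_m^n\Xi'^* = \Xi'^* V^n$ for every $n\geq 0$. Applying this to $\mathscr{E} h = \Xi'\mathscr{E}_m h$ and using $\Xi'^*\Xi' = I$ produces
\[
\Xi'^* V^n\mathscr{E} h = V_m^n \Xi'^*\Xi'\mathscr{E}_m h = V_m^n\mathscr{E}_m h,
\]
and the same inner product computation as above shows that this assignment is isometric on $\mathcal{N} := \overline{\operatorname{Span}}\{V^n\mathscr{E} h : n\geq 0,\, h\in\mathcal{H}\}\subseteq \mathcal{K}$. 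Combining this with the general coisometric identity $\|\Xi'^*y\|^2 = \|P_{\operatorname{Ran}\Xi'}y\|^2$ (valid because $\Xi'\Xi'^* = P_{\operatorname{Ran}\Xi'}$) forces $\mathcal{N}\subseteq \operatorname{Ran}\Xi'$. Hence $\Xi' V_m^n\mathscr{E}_m h = \Xi'\Xi'^* V^n\mathscr{E} h = P_{\operatorname{Ran}\Xi'}V^n\mathscr{E} h = V^n\mathscr{E} h$, so $\Xi'$ agrees with $\Xi$ on the spanning set and thus on all of $\mathcal{K}_m$.

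The main obstacle is precisely this last step of the uniqueness argument: because the hypothesis is phrased through the adjoint intertwining $\Xi V_m^* = V^*\Xi$ rather than the forward relation $V\Xi = \Xi V_m$, one cannot immediately read off the values of $\Xi$ on the vectors $V_m^n\mathscr{E}_m h$ from $\Xi\mathscr{E}_m = \mathscr{E}$ alone. The isometric property of $\Xi$ must be invoked to first force $\mathcal{N}\subseteq \operatorname{Ran}\Xi'$, after which $\Xi'\Xi'^*$ collapses to the identity on $\mathcal{N}$ and $\Xi'$ is pinned down uniquely.
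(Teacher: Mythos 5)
Your proof is correct, and its existence half is essentially the paper's own argument: the paper too defines $\Xi$ on the spanning set guaranteed by minimality via $\Xi(V_m^k\mathscr{E}_m h)=V^k\mathscr{E}h$ and checks $\mathscr{E}=\Xi\mathscr{E}_m$ and $\Xi V_m^*=V^*\Xi$ on that set exactly as you do; it merely \emph{asserts} that this formula gives a well-defined isometry, which your Gram-matrix identity
\[
\langle V^n\mathscr{E}h,\,V^k\mathscr{E}h'\rangle=\langle T^{n-k}h,h'\rangle=\langle V_m^n\mathscr{E}_m h,\,V_m^k\mathscr{E}_m h'\rangle \qquad(n\ge k)
\]
actually proves. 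The genuine difference is uniqueness: the paper disposes of it with the remark that it follows ``easily'' from the defining formula \eqref{eq4.39}, which is not an argument, and as you rightly stress it cannot be a one-liner, since the hypotheses constrain a competitor $\Xi'$ only through $\Xi'\mathscr{E}_m=\mathscr{E}$ and the \emph{adjoint} intertwining, which do not directly reveal $\Xi'$ on the vectors $V_m^n\mathscr{E}_m h$. Your route --- take adjoints to get $V_m^n\Xi'^*=\Xi'^*V^n$, deduce $\Xi'^*V^n\mathscr{E}h=V_m^n\mathscr{E}_m h$ from $\Xi'^*\Xi'=I$, then use the isometry of $\Xi'^*$ on $\mathcal{N}$ together with $\Xi'\Xi'^*=P_{\operatorname{Ran}\Xi'}$ to force $\mathcal{N}\subseteq\operatorname{Ran}\Xi'$ --- is sound, and the isometry hypothesis it exploits is indispensable: for $T=0$ on $\mathbb{C}$, $V_m$ the unilateral shift $S$ with $\mathscr{E}_m c=ce_0$, and $V=S\oplus S$ with $\mathscr{E}c=(ce_0,0)$, the non-isometric operator $\Xi'=\begin{pmatrix} I \\ S^*\end{pmatrix}$ also satisfies both displayed relations, so uniqueness fails without it. If you want a shorter finish, note that once $\Xi'^*V^n\mathscr{E}h=V_m^n\mathscr{E}_m h$ is in hand,
\[
\|\Xi'V_m^n\mathscr{E}_m h-V^n\mathscr{E}h\|^2=2\|h\|^2-2\operatorname{Re}\langle V_m^n\mathscr{E}_m h,\ \Xi'^*V^n\mathscr{E}h\rangle=0,
\]
which pins $\Xi'$ down on the spanning set without any appeal to ranges or projections.
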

\begin{proof}
	Given $V_m$ acting on the Hilbert space $\mathcal{K}_m$ is any minimal isometric dilation of $T$ corresponding to an isometry $\mathscr E_m:\mathcal{H}\rightarrow \mathcal{K}_m$, therefore we have, 
	\begin{equation*}
		\mathcal{K}_m=\bigvee_{k=0}^{\infty}V_m^k(\mathscr E_m\mathcal{H}).
	\end{equation*}
	Also, $V$ acting on the Hilbert space $\mathcal{K}$ is an isometric dilation of $T$ corresponding to an isometry $\mathscr E:\mathcal{H}\rightarrow \mathcal{K}$ then $V$-reducing subspace $$\mathcal{K}_1:=\bigvee_{k=0}^{\infty}V^k(\mathscr E \mathcal{H})\subseteq\mathcal{K}$$ is the minimal isometric dilation space of $T$. Therefore, there exist an isometry $\Xi:\mathcal K_m\rightarrow \mathcal K=\mathcal K_1\oplus \mathcal K_2$ defined by \begin{equation}\label{eq4.39}
		\Xi(V_m^k(\mathscr E_mh))=V^k(\mathscr E h)\quad\text{for all}\quad h\in \mathcal{H}\quad \text{and}\quad k\in \{0\}\cup\mathbb{N},
	\end{equation}
	where $\mathcal{K}_2=\mathcal{K}\ominus\mathcal{K}_1$. This proves the fact that $\mathscr E=\Xi\mathscr E_m$. Next, we prove that $\Xi$ intertwines $V^*$ and $V_m^*$. Observe that, we have the following relation 
	\begin{align*}
		\Xi V_m^*(V_m^k\mathscr E_m)&=\Xi V_m^{k-1}\mathscr E_m\\
		&=V^{k-1}\mathscr E\\
		&=V^*(V^k\mathscr E)=V^*\Xi (V_m^k\mathscr E_m)
	\end{align*}
	for all $k\in \mathbb{N}$ and \begin{equation*}
		\Xi V_m^*\mathscr E_m=\Xi \mathscr E_m T^*=\mathscr E T^*=V^*\mathscr E=V^*\Xi \mathcal E_m.
	\end{equation*}
	This shows that $\Xi V_m^*=V^*\Xi.$ Moreover, using \eqref{eq4.39} one can easily prove that $\Xi$ is unique. 
	%	 We now show that $\Phi$ has the form stated in \eqref{eq4.39}. Assume that $V_1$ and $V_2$ be isometries with Wold decompositions given by $V_1=M^1_z\oplus U_1$ acting on $\mathcal{K}_1=H^2_{E_1}(\mathbb{D})\oplus\mathcal{K}^1_u$ and $V_2=M^2_z\oplus U_2$ acting on $\mathcal{K}_2=H^2_{E_2}(\mathbb{D})\oplus\mathcal{K}^2_u$ where $M^1_z\in \mathcal{B}(H^2_{E_1}(\mathbb{D}))$ and $M^2_z\in \mathcal{B}(H^2_{E_2}(\mathbb{D}))$ are pure shifts, and $U_1\in \mathcal{B}(\mathcal{K}^1_u)$ and $U_2\in \mathcal{B}(\mathcal{K}^2_u)$ are unitaries. Let 
	% 	\begin{equation}
		% 		\Phi=\begin{bmatrix}
			% 			X_1 & X_2\\
			% 			X_3 & X_4
			% 		\end{bmatrix}: \mathcal{K}_2=H^2_{E_2}(\mathbb{D})\oplus\mathcal{K}^2_u\rightarrow \mathcal{K}_1=H^2_{E_1}(\mathbb{D})\oplus\mathcal{K}^1_u
		% 	\end{equation}
	% 	Then, using the intertwining property of $\Phi$ as mentioned in \eqref{eq4.38}, we have $$X_1M^{2*}_z=M^{1*}_zX_1,~ X_2U_2^*=M^{1*}_zX_2,~ X_3M^{2*}_z=U_1^*X_3~\text{and}~X_4U_2^*=U_1^*X_4.$$ Note that $X_2^*$ and $X_3^*$ intertwines a unitary and a pure isometry, therefore, it follows (see Lemma 2.11) that $X_2=0$ and $X_3=0$. Hence, $$\Phi=\begin{bmatrix}
		% 	X_1 & 0\\
		% 	0 & V_2
		% 	\end{bmatrix},$$ 
	% 	where $X_4=V_2$, and$$\ran\Phi=\ran X_1\oplus \ran V_2\subseteq \mathcal{K}_1=H^2_{E_1}(\mathbb{D})\oplus\mathcal{K}^1_u$$   
	This completes the proof.
\end{proof}
\begin{cor}\label{cor4.2}
	Let $T$ be a contraction acting on the Hilbert space $\mathcal{H}$. Assume that $V_{m_1}$ acting on the Hilbert space $\mathcal{K}_{m_1}$ be a minimal isometric dilation of $T$ corresponding to an isometry $\mathscr E_{m_1}:\mathcal{H}\rightarrow \mathcal{K}_{m_1}$ and $V_{m_2}$ acting on the Hilbert space $\mathcal{K}_{m_2}$ be any other minimal isometric dilation of $T$ corresponding to an isometry $\mathscr E_{m_2}:\mathcal{H}\rightarrow \mathcal{K}_{m_2}$. Then there exist a unique unitary $\Xi_u\in \mathcal{B}(\mathcal{K}_{m_2},\mathcal{K}_{m_1})$ such that 
	\begin{equation*}\label{eq4.40}
		\mathscr E_{m_1}=\Xi_u\mathscr E_{m_2} \quad \text{and} \quad \Xi_u V_{m_2}^*=V_{m_1}^*\Xi_u.
	\end{equation*} 
\end{cor}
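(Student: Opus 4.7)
The plan is to apply Theorem \ref{thm4.1} directly, treating one of the minimal dilations as the ``arbitrary'' isometric dilation and the other as the minimal one, and then to upgrade the resulting isometry to a unitary by invoking the minimality of the codomain.

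Concretely, I would first apply Theorem \ref{thm4.1} with the roles $V:=V_{m_1}$, $\mathscr{E}:=\mathscr{E}_{m_1}$, and $V_m:=V_{m_2}$, $\mathscr{E}_m:=\mathscr{E}_{m_2}$. Since every minimal isometric dilation is, in particular, an isometric dilation, the hypotheses of Theorem \ref{thm4.1} are met. The conclusion provides a unique isometry $\Xi_u\in\mathcal{B}(\mathcal{K}_{m_2},\mathcal{K}_{m_1})$ with
\[
\mathscr{E}_{m_1}=\Xi_u\mathscr{E}_{m_2}\quad\text{and}\quad \Xi_u V_{m_2}^{*}=V_{m_1}^{*}\Xi_u.
\]

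Next, I would show that $\Xi_u$ is surjective, and hence unitary. From the explicit formula \eqref{eq4.39} in the proof of Theorem \ref{thm4.1},
\[
\Xi_u\bigl(V_{m_2}^{k}\mathscr{E}_{m_2}h\bigr)=V_{m_1}^{k}\mathscr{E}_{m_1}h,\qquad h\in\mathcal{H},\ k\in\{0\}\cup\mathbb{N},
\]
so $\operatorname{Ran}\Xi_u$ contains $\operatorname{Span}\{V_{m_1}^{k}\mathscr{E}_{m_1}h : h\in\mathcal{H},\ k\geq 0\}$. Because $V_{m_1}$ is a \emph{minimal} isometric dilation of $T$, the closed linear span of this set is all of $\mathcal{K}_{m_1}$. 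An isometry has closed range, so $\operatorname{Ran}\Xi_u=\mathcal{K}_{m_1}$, and therefore $\Xi_u$ is a surjective isometry, i.e.\ a unitary operator.

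Uniqueness of $\Xi_u$ as a unitary is inherited directly from its uniqueness as an isometry, already guaranteed by Theorem \ref{thm4.1}. There is no real obstacle here: the entire argument is a short two-step bootstrap, the only substantive point being the observation that minimality of $V_{m_1}$ is exactly what forces the range of $\Xi_u$ to be dense, and closedness of the range of an isometry then promotes density to equality. This is the standard (and expected) Sz.-Nagy-style uniqueness statement for minimal isometric dilations, now obtained as an immediate corollary of the more general factorization theorem.
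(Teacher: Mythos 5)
Your proof is correct and takes essentially the approach the paper intends: the paper states Corollary~\ref{cor4.2} with no written proof, treating it as an immediate consequence of Theorem~\ref{thm4.1}, and your two-step bootstrap---apply Theorem~\ref{thm4.1} with $V_{m_1}$ in the role of the arbitrary dilation and $V_{m_2}$ as the minimal one, then use minimality of $V_{m_1}$ together with the closedness of an isometry's range to promote $\Xi_u$ to a unitary---is exactly the standard way to fill in that omitted step. Uniqueness transfers verbatim from Theorem~\ref{thm4.1}, so nothing is missing.
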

Note that the above factorization results can be visualized through the following commutative diagrams: 
\begin{figure}[h]
	\begin{subfigure}{0.45\textwidth}
		\centering 		
		\begin{tikzcd}\label{fig1}
			\mathcal{H} \arrow{r}{\mathscr E} \arrow[swap]{dr}{\mathscr E_{m}} & \mathcal{K} \\
			& \arrow[swap]{u}{\Xi} \mathcal{K}_{m}
		\end{tikzcd}
		\caption{Figure 1.1}
	\end{subfigure}
	\hfill
	\begin{subfigure}{0.45\textwidth}
		\centering
		\begin{tikzcd}\label{fig2}
			\mathcal{H} \arrow{r}{\mathscr E_{m_1}} \arrow[swap]{dr}{\mathscr E_{m_2}} & \mathcal{K}_{m_1} \\
			& \arrow[swap]{u}{\Xi_u}\mathcal{K}_{m_2}
		\end{tikzcd}.
		\caption{Figure 1.2}
	\end{subfigure}
\end{figure}

\begin{rem}\label{rem4.3}
	If $T$ is a (c.n.u) contraction on a Hilbert space $\mathcal{H}$. It is well known (Douglas \cite{10}; Sz.-Nagy and Foias \cite{Nagy}) that $T$ admits a minimal isometric dilation. Let $V_D$ and $V_{NF}$ be the Douglas minimal isometric dilation and Sz.-Nagy and Foias minimal isometric dilation, respectively. Note that each of these dilations admits a well-defined explicit form. Denote by $\mathscr E_{D}:\mathcal{H}\to\mathcal{K}_D$ and $\mathscr E_{NF}:\mathcal{H}\to\mathcal{K}_{NF}$ the embeddings associated with the minimal dilations, respectively. Suppose that $V$ is any isometric dilation of $T$ on a Hilbert space $\mathcal{K}$ with isometric embedding $\mathscr E:\mathcal{H}\to\mathcal{K}$. By virtue of Theorem~\ref{thm4.1}, there exist isometries $\Xi_{1}\in\mathcal{B}(\mathcal{K}_{D},\mathcal{K})$ and $\Xi_{2}\in\mathcal{B}(\mathcal{K}_{NF},\mathcal{K})$ such that \begin{equation}\label{eq4.41} 		
		\mathscr E=\Xi_1\mathscr E_{D} \quad \text{and} \quad \Xi_1 V_D^*=V^* \Xi_1, 
	\end{equation} 
	and  
	\begin{equation}\label{eq4.42} 		
		\mathscr E=\Xi_2\mathscr E_{NF} \quad \text{and} \quad \Xi_2 V_{NF}^*=V^* \Xi_2. 
	\end{equation} 
	Moreover, using Corollary \ref{cor4.2} it follows that there exist a unique unitary $\Xi_u\in \mathcal{B}(\mathcal{K}_{NF},\mathcal{K}_D)$ such that 
	\begin{equation}\label{eq4.43}
		\mathscr E_{D}=\Xi_u\mathscr E_{NF}\quad \text{and} \quad \Xi_u V_{NF}^*=V_{D}^* \Xi_u.
	\end{equation}
\end{rem}

In 2015, Sarkar \cite{17} obtained a factorization theorem that elucidates the relationship between Sz.-Nagy-Foias minimal isometric dilation and an arbitrary isometric dilation of a given contraction. The above equations in \eqref{eq4.41} and \eqref{eq4.42} describes how each minimal isometric dilation (Douglas; Sz.-Nagy-Foias) is related to an arbitrary isometric dilation of the contraction $T$. Also, equation \eqref{eq4.43} highlights the interplay between Douglas minimal isometric dilation and Sz.-Nagy-Foias minimal isometric dilation. In fact, the first part of equations in \eqref{eq4.41}, \eqref{eq4.42}, and \eqref{eq4.43} admit the following commutative diagram representations.
\begin{figure}[h]
	\begin{subfigure}{0.45\textwidth}
		\centering 
		\begin{tikzcd}\label{fig3}
			\mathcal{H} \arrow{r}{\mathscr E} \arrow[swap]{dr}{\mathscr E_{D}} & \mathcal{K} \\
			& \arrow[swap]{u}{\Xi_1} \mathcal{K}_{D}
		\end{tikzcd}
		\caption{Figure 1.3}
		\label{fig3}
	\end{subfigure}
	\begin{subfigure}{0.45\textwidth}
		\centering 
		\begin{tikzcd}\label{fig4}
			\mathcal{H} \arrow{r}{\mathscr E} \arrow[swap]{dr}{\mathscr E_{NF}} & \mathcal{K} \\
			& \arrow[swap]{u}{\Xi_2} \mathcal{K}_{NF}
		\end{tikzcd}
		\caption{Figure 1.4}
		\label{fig4}
	\end{subfigure}
	\begin{subfigure}{0.45\textwidth}
		\centering 
		\begin{tikzcd}\label{fig5}
			\mathcal{H} \arrow{r}{\mathscr E_{D}} \arrow[swap]{dr}{\mathscr E_{NF}} & \mathcal{K}_D \\
			& \arrow[swap]{u}{\Xi_u} \mathcal{K}_{NF}
		\end{tikzcd}
		\caption{Figure 1.5}
	\end{subfigure}
\end{figure}

\section{A Sz.-Nagy-Foias and Sch\"affer Type Functional Model}
In this section, we develop two functional models for $\Gamma_n$-contractions: the first is derived from the Sz.-Nagy-Foias model theory, and the second is obtained from the Sch\"affer model for a contraction. 
\subsection{Sz.-Nagy-Foias Type Functional Model}
Sz.-Nagy and Foias developed an explicit functional model for the minimal isometric dilation of a completely non-unitary (c.n.u.) contraction (see~\cite{Nagy}). They introduced the \textit{characteristic function} of a contraction $T$ on a Hilbert space $\mathcal{H}$, which is a contractive analytic function on the unit disk $\mathbb{D}$ and is given explicitly by  
\begin{equation*}
	\Theta_T(z):=[-T+z\mathscr{P}_{D_{T^*},T^*}T] |_{\mathcal {D}_T}:\mathcal {D}_T\rightarrow \mathcal {D}_{T^*}.
\end{equation*}
Let $\Theta_T(\eta)$ denote the radial limit of the characteristic function, defined almost everywhere on $\mathbb{T}$, and let $\Delta_T(\eta)$ denote its defect operator, given by 
\begin{equation*}
	\Delta_T(\eta)=(I-\Theta_T(\eta)^*\Theta_T(\eta))^\frac{1}{2}.
\end{equation*}
In this setting, the operator 
$$\begin{bmatrix}
	M_z & 0\\
	0 & M_{\eta}|_{\overline{\Delta_{T}L^2_{\mathcal{D}_{T}}(\mathbb{T})}}
\end{bmatrix},$$
acting on the dilation space $H^2_{\mathcal{D}_{T^*}}(\mathbb{D}) \oplus
\overline{\Delta_{T}L^2_{\mathcal{D}_T}(\mathbb{T})}$, is an isometry. Moreover, by \cite[Theorem 2.5]{17} there exists an isometry $\mathscr{E}_{NF}:\mathcal{H}\rightarrow H^2_{\mathcal{D}_{T^*}}(\mathbb{D}) \oplus
\overline{\Delta_{T}L^2_{\mathcal{D}_T}(\mathbb{T})}$ that satisfies the intertwining relation $$\mathscr{E}_{NF}T^*=\begin{bmatrix}
	M_z & 0\\
	0 & M_{\eta}|_{\overline{\Delta_{T}L^2_{\mathcal{D}_{T}}(\mathbb{T})}} 
\end{bmatrix}^*\mathscr{E}_{NF}.$$
The next theorem develops a Sz.-Nagy-Foias type functional model for the 
$\Gamma_n$-contraction $(S_1,\dots,S_n)$, thus generalizing the classical dilation framework to several variables.

\begin{thm}
	\textit{(Sz.-Nagy-Foias Type Functional Model.)} Let $(S_1,\dots,S_n)$ be a $\Gamma_n$-contraction acting on a Hilbert space $\mathcal{H}$ such that $S_n$ is c.n.u, and $E_1,\dots,E_{n-1}$ be the fundamental operators of $(S_1^*,\dots,S_n^*)$ on $\mathcal{H}$ that satisfies the commutativity condition \eqref{eq1.1}. Let $\Phi_1,\dots,\Phi_{n-1}\in H^{\infty}(\mathcal{B}(\mathcal{D}_{S_n^*}))$ be obtained as in Theorem \ref{thm2.3} such that $(\gamma_1 M_{\Phi_1}, \dots, \gamma_{n-1} M_{\Phi_{n-1}})$ is a $\Gamma_{n-1}$-contraction. If $(D_1,\dots,D_n)$ be the canonical $\Gamma_n$-unitary constructed from $(S_1,\dots,S_n)$. Then $(S_1,\dots,S_n)$ is jointly unitarily equivalent to \begin{equation*}
		P_{\mathcal{H}_{NF}}\left( \begin{bmatrix}
			M_{\Phi_1} & 0\\
			0 & N_1
		\end{bmatrix},\dots,\begin{bmatrix}
			M_{\Phi_{n-1}} & 0\\
			0 & N_{n-1}
		\end{bmatrix},\begin{bmatrix}
			M_z & 0\\
			0 & M_{\eta}|_{\overline{\Delta_{S_n}L^2_{\mathcal{D}_{S_n}}(\mathbb{T})}}
		\end{bmatrix}\right)\Bigg|_{\mathcal{H}_{NF}},
	\end{equation*}
	where $(N_1,\dots,N_{n-1})=U^*(D_1,\dots,D_{n-1})U$, from some unitary $U:\overline{\Delta_{S_n}L^2_{\mathcal{D}_{S_n}}(\mathbb{T})}\rightarrow \mathscr{P}_{S_n^*}$, and $\mathcal{H}_{NF}$ is the functional model space given by 
	\begin{equation*}
		\mathcal{H}_{NF}=\operatorname{Ran} \mathcal{E}_{NF}=\begin{bmatrix}
			H^2_{\mathcal{D}_{S_n^*}}(\mathbb{D})\\
			\overline{\Delta_{S_n}L^2_{\mathcal{D}_{S_n}}(\mathbb{T})}
		\end{bmatrix}\ominus
		\begin{bmatrix}
			\Theta_{S_n}\\
			\Delta_{S_n}
		\end{bmatrix}.H^2_{\mathcal{D}_{S_n}}(\mathbb{D}).
	\end{equation*}
\end{thm}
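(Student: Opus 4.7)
The plan is to leverage the Douglas-type functional model already built in Theorem \ref{thm3.2} and transport it to the Sz.-Nagy-Foias setting by means of the factorization theorem of Section 4. First I would invoke Theorem \ref{thm3.2} to conclude that $(S_1,\dots,S_n)$ is jointly unitarily equivalent to the compression of the block diagonal tuple $\left(\hat V_1,\dots,\hat V_n\right)$ acting on $H^2_{\mathcal{D}_{S_n^*}}(\mathbb{D})\oplus \mathscr{P}_{S_n^*}$, whose $n$-th coordinate is the Douglas minimal isometric dilation $V_D=\begin{bmatrix} M_z & 0 \\ 0 & D_n \end{bmatrix}$ corresponding to the embedding $\mathscr{E}_D$. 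On the Sz.-Nagy-Foias side, since $S_n$ is c.n.u., the operator $V_{NF}=\begin{bmatrix} M_z & 0 \\ 0 & M_{\eta}|_{\overline{\Delta_{S_n}L^2_{\mathcal{D}_{S_n}}(\mathbb{T})}} \end{bmatrix}$ is a minimal isometric dilation of $S_n$ with the explicit isometric embedding $\mathscr{E}_{NF}$ recalled in the introductory paragraph.

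The bridge between the two models is provided by Corollary \ref{cor4.2}: there is a unique unitary $\Xi_u:\mathcal{K}_{NF}\to\mathcal{K}_D$ satisfying $\mathscr{E}_D=\Xi_u\mathscr{E}_{NF}$ and $\Xi_u V_{NF}^*=V_D^*\Xi_u$. The key technical point is that $\Xi_u$ must split as a block diagonal unitary $\Xi_u=I_{H^2_{\mathcal{D}_{S_n^*}}(\mathbb{D})}\oplus U$ for some unitary $U:\overline{\Delta_{S_n}L^2_{\mathcal{D}_{S_n}}(\mathbb{T})}\to \mathscr{P}_{S_n^*}$. This follows from the uniqueness of the Wold decomposition: both $V_D$ and $V_{NF}$ already appear in Wold form, the shift parts are unitarily identifiable with $M_z$ on $H^2_{\mathcal{D}_{S_n^*}}(\mathbb{D})$, and the unitary parts are the respective second summands. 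Since $\Xi_u$ intertwines the two isometries and is unitary, it maps the wandering subspace of $V_{NF}$ onto that of $V_D$ and the unitary summand onto the unitary summand, forcing the block structure claimed. Furthermore, explicit matching against $\mathscr{E}_{NF}$ and $\mathscr{E}_D$, both of which have $\mathscr{D}_{D_{S_n^*},S_n^*}$ as first component, normalises the first block to the identity. In particular $U$ intertwines $M_\eta|_{\overline{\Delta_{S_n}L^2_{\mathcal{D}_{S_n}}(\mathbb{T})}}$ with $D_n$.

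With $U$ in hand, set $N_i:=U^*D_iU$ for $i=1,\dots,n-1$. Since $(D_1,\dots,D_n)$ is a $\Gamma_n$-unitary, Lemma \ref{lem2.1} shows that $(N_1,\dots,N_{n-1},M_\eta|_{\overline{\Delta_{S_n}L^2_{\mathcal{D}_{S_n}}(\mathbb{T})}})$ is again a $\Gamma_n$-unitary, and Lemma \ref{lem2.2} then promotes the block diagonal tuple in the statement to a $\Gamma_n$-isometry on $H^2_{\mathcal{D}_{S_n^*}}(\mathbb{D})\oplus\overline{\Delta_{S_n}L^2_{\mathcal{D}_{S_n}}(\mathbb{T})}$. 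Conjugating the Douglas intertwining identity in Theorem \ref{thm3.2} by $\Xi_u=I\oplus U$ and using $\mathscr{E}_D=\Xi_u\mathscr{E}_{NF}$ transfers the relation to
\[
\mathscr{E}_{NF}(S_1^*,\dots,S_n^*)=\left(\begin{bmatrix} M_{\Phi_1} & 0 \\ 0 & N_1 \end{bmatrix},\dots,\begin{bmatrix} M_z & 0 \\ 0 & M_\eta|_{\overline{\Delta_{S_n}L^2_{\mathcal{D}_{S_n}}(\mathbb{T})}} \end{bmatrix}\right)^{\!*}\mathscr{E}_{NF},
\]
which is exactly the required joint unitary equivalence on $\mathcal{H}_{NF}=\operatorname{Ran}\mathscr{E}_{NF}$.

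The remaining task is to identify $\mathcal{H}_{NF}$ with the concrete orthogonal complement stated. This is the standard Sz.-Nagy-Foias functional-model description for the c.n.u.\ contraction $S_n$: from the explicit formula for $\mathscr{E}_{NF}$ in terms of the defect operators $D_{S_n^*}$, $\Delta_{S_n}$ and the characteristic function $\Theta_{S_n}$, one verifies that $(\mathrm{Ran}\,\mathscr{E}_{NF})^\perp=\begin{bmatrix} \Theta_{S_n} \\ \Delta_{S_n} \end{bmatrix}H^2_{\mathcal{D}_{S_n}}(\mathbb{D})$; I would simply cite \cite{Nagy,17} for this identification. The main obstacle I expect is rigorously justifying the block-diagonal form of $\Xi_u$: the Wold-decomposition argument is morally clear, but it requires carefully pinning down wandering subspaces and cross-checking them against the first components of $\mathscr{E}_D$ and $\mathscr{E}_{NF}$. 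Once that is done, every other step is either direct application of Sections 3--4 or a straightforward unitary transfer.
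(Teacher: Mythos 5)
Your proposal is correct and follows essentially the same route as the paper: invoke the Douglas-type model (Theorem \ref{thm3.2}), obtain the unitary $\Xi_u$ relating the Douglas and Sz.-Nagy--Foias dilations from Corollary \ref{cor4.2} (Remark \ref{rem4.3}), show that $\Xi_u$ has the block-diagonal form $I\oplus U$, and conjugate the Douglas intertwining relation by $\Xi_u$. The one place where you genuinely diverge is the block-diagonalization step: the paper writes $\Xi_u$ as a $2\times 2$ operator matrix, equates entries in the intertwining relation, kills the off-diagonal blocks via \cite[Lemma 2.5]{ay1}, identifies the $(1,1)$ block as $M_\Psi$, and then asserts that unitarity of $\Xi_u$ forces $M_\Psi=I$; you instead use the functoriality of the Wold decomposition (a unitary intertwiner of isometries carries the unitary part onto the unitary part and the shift part onto the shift part), and then normalize the $(1,1)$ block by matching the first components of $\mathscr{E}_D$ and $\mathscr{E}_{NF}$. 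Both arguments are valid, but your normalization is actually the more careful one: unitarity alone only forces $\Psi$ to be a constant unitary on $\mathcal{D}_{S_n^*}$, not the identity, and it is precisely the fact that $\mathscr{E}_D$ and $\mathscr{E}_{NF}$ share the first component $\mathscr{D}_{D_{S_n^*},S_n^*}$ (evaluate at $z=0$ against $\operatorname{Ran}D_{S_n^*}$) that pins $\Psi$ down to $I$ --- a point the paper's proof glosses over, though it is needed to keep the blocks $M_{\Phi_i}$ unchanged under conjugation. The remaining steps --- Lemmas \ref{lem2.1} and \ref{lem2.2} for the $\Gamma_n$-structure, the transfer of the intertwining relation to $\mathscr{E}_{NF}$, and citing \cite{Nagy,17} for the identification of $\operatorname{Ran}\mathscr{E}_{NF}$ with the stated orthogonal complement --- coincide with the paper's; note also that your route recovers $W_n=V_{NF}$ directly from the block structure, whereas the paper invokes uniqueness of the Sz.-Nagy--Foias minimal dilation for the c.n.u.\ contraction $S_n$, an immaterial difference.
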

\begin{proof}
	We use Douglas' minimal isometric dilation to obtain the Sz.-Nagy and Foias type functional model. Given that $(S_1,\dots,S_{n-1},S_n)$ is a $\Gamma_n$-contraction and $S_n$ is c.n.u. Hence using equation \eqref{eq4.43} of Remark \ref{rem4.3}, there exist a unique unitary $\Xi_u\in \mathcal{B}\left(\mathcal{K}_{NF},\mathcal{K}_{D}\right)$ such that $\mathscr{E}_D=\Xi_{u}\mathscr{E}_{NF}$ and
	\begin{equation}\label{eqrel-nd}
		\Xi_{u} \begin{bmatrix}
			M_z & 0\\
			0 & M_{\eta}|_{\overline{\Delta_{S_n}L^2_{\mathcal{D}_{S_n}}(\mathbb{T})}}
		\end{bmatrix}^*=\begin{bmatrix}
			M_z & 0\\
			0 & D_n
		\end{bmatrix}^*\Xi_{u},
	\end{equation}
	$$\text{where} \quad \mathscr{E}_D:\mathcal{H}:\rightarrow	\begin{bmatrix}
		H^2_{\mathcal{D}_{S_n^*}}(\mathbb{D}) \\
		\mathscr{P}_{S_n^*}
	\end{bmatrix}:=\mathcal{K}_D~ \text{and}~ \mathscr{E}_{NF}:\mathcal{H}:\rightarrow\begin{bmatrix}
		H^2_{\mathcal{D}_{S_n^*}}(\mathbb{D})\\
		\overline{\Delta_{S_n}L^2_{\mathcal{D}_{S_n}}(\mathbb{T})}
	\end{bmatrix}:=\mathcal{K}_{NF}$$
	are the isometries associated with the Douglas minimal isometric dilation and Sz.-Nagy and Foias minimal isometric dilation, respectively. Now, we obtain the exact form of the operator $\Xi_u$. For this, assume that $\Xi_u=\begin{bmatrix}
		U_{11} & U_{12}\\
		U_{21} & U_{22}
	\end{bmatrix}$ and substituting it in equation \eqref{eqrel-nd}, we obtain that \begin{equation*}
		\begin{bmatrix}
			U_{11}M_z^* & U_{12}{M_{\eta}|_{\overline{\Delta_{S_n}L^2_{\mathcal{D}_{S_n}}(\mathbb{T})}}}^*\\
			U_{21}M_z^* & U_{22}{M_{\eta}|_{\overline{\Delta_{S_n}L^2_{\mathcal{D}_{S_n}}(\mathbb{T})}}}^*
		\end{bmatrix}=\begin{bmatrix}
			M_z^*U_{11} & M_z^*U_{12}\\
			D_n^*U_{21} & D_n^*U_{22}
		\end{bmatrix}. 
	\end{equation*} Equating the $(1,2)$-th entry and $(2,1)$-th entry, and later applying \cite[Lemma 2.5]{ay1} implies that $U_{12}=U_{21}=0$. Finally, by equating the $(1,1)$-th entry we have $U_{11}=M_{\Psi}$ for some $\Psi\in H^{\infty}(\mathcal{B}(\mathcal{D}_{S_n^*}))$. Moreover, the fact that $\Xi_u$ is unitary, forces $M_{\Psi}$ to be equal to the identity operator in $H^2_{\mathcal{D}_{S_n^*}}(\mathbb{D})$. Hence, the unitary operator $\Xi_u$ has a specific form given by 
	\begin{equation*}
		\Xi_u=\begin{bmatrix}
			I_{H^2_{\mathcal{D}_{S_n^*}}(\mathbb{D})} & 0\\
			0 & U
		\end{bmatrix},
	\end{equation*}
	for some unitary $U:\overline{\Delta_{S_n}L^2_{\mathcal{D}_{S_n}}(\mathbb{T})}\rightarrow \mathscr{P}_{S_n^*}$.
	Now, as a consequence of the assumption and Theorem \ref{thm3.2}, we get the following relation 
	\begin{equation}\label{eq5.50}
		\mathscr{E}_D(S_1,\dots,S_{n-1},S_n)^*=(\hat{V}_1,\dots,\hat{V}_{n-1},\hat{V}_n)^*\mathscr{E}_D,
	\end{equation}
	where
	$$(\hat{V}_1,\dots,\hat{V}_{n-1},\hat{V}_n)=\left( \begin{bmatrix}
		M_{\Phi_1} & 0\\
		0 & D_1
	\end{bmatrix},\dots,\begin{bmatrix}
		M_{\Phi_{n-1}} & 0\\
		0 & D_{n-1}
	\end{bmatrix},\begin{bmatrix}
		M_z & 0\\
		0 & D_n 
	\end{bmatrix} \right)$$
	and $(D_1,\dots,D_n)$ be the $\Gamma_n$-unitary canonically constructed from the $\Gamma_n$-contraction $(S_1,\dots,S_n)$. Substituting the expression of $\mathscr E_D$ by $\Xi_u\mathscr{E}_{NF}$ in equation \eqref{eq5.50} we get
	\begin{equation*}
		\Xi_u\mathscr{E}_{NF}(S_1,\dots,S_{n-1},S_n)^*=(\hat{V}_1,\dots,\hat{V}_{n-1},\hat{V}_n)^*\Xi_u\mathscr{E}_{NF}.
	\end{equation*}
	It is equivalent to 
	\begin{equation}\label{eq5.51}
		\mathscr{E}_{NF}(S_1,\dots,S_{n-1},S_n)^*=(W_1,\dots,W_{n-1},W_n)^*\mathscr{E}_{NF}.
	\end{equation}  
	Clearly $(W_1,\dots,W_{n-1},W_n):=\Xi_u^*(\hat{V_1},\dots,\hat{V_n})\Xi_u$ is $\Gamma_n$-isometry (see Lemma \ref{lem2.1}) and is equal to 
	\begin{equation*}
		\left( \begin{bmatrix}
			M_{\Phi_1} & 0\\
			0 & U^*D_1U
		\end{bmatrix},\dots,\begin{bmatrix}
			M_{\Phi_{n-1}} & 0\\
			0 & U^*D_{n-1}U
		\end{bmatrix},\Xi_u^*\begin{bmatrix}
			M_z & 0\\
			0 & D_n 
		\end{bmatrix}\Xi_u \right). 
	\end{equation*} At this point, we use the uniqueness of the Sz.-Nagy-Foias minimal isometric dilation for the c.n.u $S_n$, which guarantees that $(W_1,\dots,W_{n-1},W_n)$ is equal to $$\left( \begin{bmatrix}
		M_{\Phi_1} & 0\\
		0 & N_1
	\end{bmatrix},\dots,\begin{bmatrix}
		M_{\Phi_{n-1}} & 0\\
		0 & N_{n-1}
	\end{bmatrix},\begin{bmatrix}
		M_z & 0\\
		0 & M_{\eta}|_{\overline{\Delta_{S_n}L^2_{\mathcal{D}_{S_n}}(\mathbb{T})}}
	\end{bmatrix} \right),$$
	where $(N_1,\dots,N_{n-1})=U^*(D_1,\dots,D_{n-1})U$. Furthermore, if we assume that $\mathcal{H}_{NF}$ is the functional model space given by 
	\begin{equation*}
		\mathcal{H}_{NF}=\operatorname{Ran} \mathscr{E}_{NF}=\begin{bmatrix}
			H^2_{\mathcal{D}_{S_n^*}}(\mathbb{D})\\
			\overline{\Delta_{S_n}L^2_{\mathcal{D}_{S_n}}(\mathbb{T})}
		\end{bmatrix}\ominus
		\begin{bmatrix}
			\Theta_{S_n}\\
			\Delta_{S_n}
		\end{bmatrix}.H^2_{\mathcal{D}_{S_n}}(\mathbb{D}),
	\end{equation*}then equation \ref{eq5.51} yields that $(S_1,\dots,S_n)$ is jointly unitary equivalent to
	\begin{equation*}
		P_{\mathcal{H}_{NF}}\left( \begin{bmatrix}
			M_{\Phi_1} & 0\\
			0 & N_1
		\end{bmatrix},\dots,\begin{bmatrix}
			M_{\Phi_{n-1}} & 0\\
			0 & N_{n-1}
		\end{bmatrix},\begin{bmatrix}
			M_z & 0\\
			0 & M_{\eta}|_{\overline{\Delta_{S_n}L^2_{\mathcal{D}_{S_n}}(\mathbb{T})}}
		\end{bmatrix}\right)\Bigg|_{\mathcal{H}_{NF}}.
	\end{equation*}
	This completes the proof.
\end{proof}

\subsection{Sch\"affer Type Functional Model}
A few years after Sz.-Nagy established his dilation theorem, Sch\"affer introduced in \cite{16} the first explicit construction of a minimal isometric dilation for a contraction. We begin by revisiting the Sch\"affer isometric dilation corresponding to a contraction $T$ acting on a Hilbert space $\mathcal{H}$. In this framework, the operator $$\begin{bmatrix}
	T & 0\\
	\textbf{D}_T & M_z 
\end{bmatrix}$$ defined on the dilation space $\mathcal{H}\oplus H^2_{\mathcal{D}_T}(\mathbb{D})$, serves as an isometry, where the map  $\textbf{D}_T:\mathcal{H}\rightarrow H^2_{\mathcal{D}_T}(\mathbb{D})$ denotes the constant function given by $$(\textbf{D}_Th)(z)=D_Th.$$ Furthermore, the map $\mathscr{E}_{Sc}:\mathcal{H}\rightarrow \mathcal{H} \oplus H^2_{\mathcal{D}_T}(\mathbb{D})$, defined by $\mathscr{E}_{Sc}h=h\oplus 0$ for all $h \in \mathcal{H}$ (the reader is referred to \cite[discussion in Page 864]{17}), is an isometry. It satisfies the following intertwining relation $$\mathscr{E}_{Sc}T^*=\begin{bmatrix}
	T & 0\\
	\textbf{D}_T & M_z 
\end{bmatrix}^*\mathscr{E}_{Sc}.$$
The theorem that follows presents a Sch\"affer-type functional model for the 
$\Gamma_n$-contraction $(S_1,\dots,S_n)$, extending this classical dilation framework to the multivariable setting.

\begin{thm}
	\textit{(Sch\"affer Type Functional model.)} 
	Assume that $(S_1,\dots,S_n)$ is a $\Gamma_n$-contraction acting on a Hilbert space $\mathcal{H}$, and let $(F_1,\dots,F_{n-1})$, $(E_1,\dots,E_{n-1})$ are the fundamental operator tuples of $(S_1,\dots,S_n)$, $(S_1^*,\dots,S_n^*)$  respectively.  Let $\Phi_1,\dots,\Phi_{n-1}\in H^{\infty}(\mathcal{B}(\mathcal{D}_{S_n^*}))$ be obtained as in Theorem \ref{thm2.3} such that the tuple $(\gamma_1 M_{\Phi_1}, \dots, \gamma_{n-1} M_{\Phi_{n-1}})$ is a $\Gamma_{n-1}$-contraction. If $(D_1,\dots,D_n)$ be the $\Gamma_n$-unitary canonically constructed from $(S_1,\dots,S_n)$. Then $(S_1,\dots,S_n)$ is jointly unitarily equivalent to
	\begin{equation*}
		P_{\mathcal{H}_{Sc}}\left(\begin{bmatrix}
			S_1 & 0\\
			\textbf{F}_{n-1}^*\textbf{D}_{S_n} & M_{\Psi_1}
		\end{bmatrix},\dots,\begin{bmatrix}
			S_{n-1} & 0\\
			\textbf{F}_1^*\textbf{D}_{S_n} & M_{\Psi_{n-1}}
		\end{bmatrix},\begin{bmatrix}
			S_n & 0\\
			\textbf{D}_{S_n} & M_z 
		\end{bmatrix}\right)\Bigg|_{\mathcal{H}_{Sc}}
	\end{equation*}
	where $\mathcal{H}_{Sc}$ is the functional model space given by $\mathcal{H}_{Sc}=\operatorname{Ran}\mathscr{E}_{Sc}$, $M_{\Psi_i}$ is the multiplication operator on $H^2_{\mathcal{D}_{S_n}}(\mathbb{D})$ corresponding to  $\Psi_i(z)=F_i+zF_{n-i}^*,~F_i\in \mathcal{B}(\mathcal{D}_{S_n})$, and each operator $\textbf{F}_i^*\textbf{D}_{S_n}:\mathcal{H}\rightarrow H^2_{\mathcal{D}_{S_n}}(\mathbb{D})$ is defined by $$(\textbf{F}_i^*\textbf{D}_{S_n}h)(z)=F_i^*D_{S_n}h, \quad z\in \mathbb{D}.$$
\end{thm}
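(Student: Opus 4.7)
The plan is to directly verify that the tuple $(W_1,\dots,W_n)$ in the statement is a $\Gamma_n$-isometric lift of $(S_1,\dots,S_n)$ relative to the Sch\"affer embedding $\mathscr{E}_{Sc}:h\mapsto(h,0)\in\mathcal{K}_{Sc}:=\mathcal{H}\oplus H^2_{\mathcal{D}_{S_n}}(\mathbb{D})$, from which the claimed unitary equivalence with the compression to $\mathcal{H}_{Sc}=\operatorname{Ran}\mathscr{E}_{Sc}$ follows immediately. The block-triangular structure of each $W_i$ makes the intertwining $\mathscr{E}_{Sc}S_i^*=W_i^*\mathscr{E}_{Sc}$ essentially automatic, since $W_i^*(h,0)=(S_i^*h,0)$, and hence $\mathcal{H}_{Sc}$ is coinvariant under $W_i^*$ for each $i$.

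The central algebraic verifications are the relation $W_i=W_{n-i}^*W_n$ from Theorem \ref{thm1.4}(2) and the commutativity $W_iW_n=W_nW_i$. Expanding the block product $W_{n-i}^*W_n$, the $(1,1)$-entry reduces to $S_{n-i}^*S_n+D_{S_n}F_iD_{S_n}$, which equals $S_i$ by the fundamental equation in Theorem \ref{thm1.2}; the $(1,2)$-entry vanishes since $(\textbf{F}_i^*\textbf{D}_{S_n})^*M_zf=D_{S_n}F_i(M_zf)(0)=0$; the $(2,1)$-entry reduces to $M_{\Psi_{n-i}}^*\textbf{D}_{S_n}=\textbf{F}_{n-i}^*\textbf{D}_{S_n}$ by computing the Fourier coefficients of $M_{\Psi_{n-i}}^*$ applied to the constant function $D_{S_n}h$; and the $(2,2)$-entry $M_{\Psi_{n-i}}^*M_z=M_{\Psi_i}$ follows from a direct comparison of Fourier coefficients using $\Psi_i(z)=F_i+zF_{n-i}^*$. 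For commutativity $W_iW_n=W_nW_i$, the diagonal entries agree trivially, and the off-diagonal $(2,1)$-equation reduces, after matching the $z$-coefficients, to the key identity $D_{S_n}S_i=F_iD_{S_n}+F_{n-i}^*D_{S_n}S_n$ from Theorem \ref{thm1.3}. Combined with the fact that $W_n$ is Sch\"affer's isometric dilation of $S_n$, these establish two of the three defining conditions for $(W_1,\dots,W_n)$ being a $\Gamma_n$-isometry via Theorem \ref{thm1.4}.

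The main obstacle is verifying that $(\gamma_1W_1,\dots,\gamma_{n-1}W_{n-1})$ is a $\Gamma_{n-1}$-contraction, which in particular requires the commutativity $W_iW_j=W_jW_i$ for $i,j<n$; the latter demands that the fundamental operators $F_i$ satisfy the commutativity condition analogous to equation \eqref{eq1.1}, and this cannot be read off from block computations alone. To handle both requirements simultaneously, the plan is to invoke a minimality argument: a straightforward span calculation shows $\bigvee_{k\geq0}V_{Sc}^k(\mathcal{H}\oplus 0)=\mathcal{K}_{Sc}$, so $W_n=V_{Sc}$ is in fact a minimal isometric dilation of $S_n$, and Corollary \ref{cor4.2} yields a unique unitary $\Xi_u:\mathcal{K}_{Sc}\to\mathcal{K}_D$ satisfying $\mathscr{E}_D=\Xi_u\mathscr{E}_{Sc}$ and $\Xi_uV_{Sc}^*=V_D^*\Xi_u$. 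The conjugated Douglas tuple $(\Xi_u^*\hat V_1\Xi_u,\dots,\Xi_u^*\hat V_n\Xi_u)$ is a $\Gamma_n$-isometry by Lemma \ref{lem2.1}, lifts $(S_1,\dots,S_n)$ through $\mathscr{E}_{Sc}$, and has $V_{Sc}$ as its $n$-th component. Both this conjugated tuple and the explicit $(W_1,\dots,W_n)$ satisfy the intertwining with $\mathscr{E}_{Sc}$, commute with $V_{Sc}$, and obey $W_i=W_{n-i}^*W_n$; these conditions, combined with the minimality of $V_{Sc}$, force the two tuples to agree on all of $\mathcal{K}_{Sc}$, thereby transferring commutativity and the $\Gamma_{n-1}$-contraction property from the Douglas tuple to the explicit Sch\"affer form and completing the proof.
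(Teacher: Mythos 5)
Your strategy is sound and, modulo one step, correct; it is essentially the paper's argument run in the opposite direction. The paper also passes through the conjugated Douglas tuple $\Xi_u^{\prime*}(\hat V_1,\dots,\hat V_n)\Xi_u^{\prime}$ obtained from Theorem \ref{thm3.2} and Corollary \ref{cor4.2}, exactly as in your last paragraph, but it never verifies the explicit tuple directly: it shows instead that this conjugated tuple, being a $\Gamma_n$-isometric lift of $(S_1,\dots,S_n)$ whose $n$-th entry is the Sch\"affer isometry, is forced by block computations into lower-triangular form with $(2,2)$-entries $M_{\Psi_i}$, $\Psi_i=G_i+zG_{n-i}^*$, and $(2,1)$-entries $\mathbf{G}_{n-i}^*\mathbf{D}_{S_n}$, and then identifies $G_i=F_i$ via the uniqueness clause of Theorem \ref{thm1.2}; the explicit form (and the mutual commutativity of the first $n-1$ entries, hence the commutativity condition on the $F_i$'s) comes out as a conclusion. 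You instead verify the stated tuple bottom-up --- your computations are right, and they correctly isolate Theorem \ref{thm1.2} as the source of $W_i=W_{n-i}^*W_n$ and Theorem \ref{thm1.3} as the source of $W_iW_n=W_nW_i$ --- and then transfer the remaining properties ($\Gamma_{n-1}$-contractivity and commutativity of $W_i,W_j$ for $i,j<n$) by identifying your tuple with the conjugated Douglas one. Your version makes the role of the two fundamental equations more transparent, but it must pay for this with a uniqueness lemma that the paper's derivation never needs.

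That lemma is the genuine soft spot: ``intertwining $+$ commutation with $V_{Sc}$ $+$ minimality force the two tuples to agree'' is not a quotable fact, and minimality alone cannot deliver it --- commuting lifts to a minimal isometric dilation space are in general far from unique (this is precisely the non-uniqueness phenomenon for Ando-type dilations). What saves the claim is the third condition $W_i=W_{n-i}^*V_{Sc}$, and it must actually be used. The argument, which your sketch omits, can be given as follows: if $(A_i)$ and $(B_i)$ both satisfy the three conditions, set $C_i=A_i-B_i$, so that $C_i^*\mathscr{E}_{Sc}=0$, $C_iV_{Sc}=V_{Sc}C_i$ and $C_i=C_{n-i}^*V_{Sc}$. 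Then, for $k\geq1$,
\begin{align*}
	\langle C_i\mathscr{E}_{Sc}h,\, V_{Sc}^{k}\mathscr{E}_{Sc}g\rangle
	&=\langle V_{Sc}\mathscr{E}_{Sc}h,\, C_{n-i}V_{Sc}^{k}\mathscr{E}_{Sc}g\rangle
	=\langle V_{Sc}\mathscr{E}_{Sc}h,\, V_{Sc}^{k}C_{n-i}\mathscr{E}_{Sc}g\rangle\\
	&=\langle \mathscr{E}_{Sc}h,\, V_{Sc}^{k-1}C_{n-i}\mathscr{E}_{Sc}g\rangle
	=\langle C_{n-i}^{*}\mathscr{E}_{Sc}S_n^{*(k-1)}h,\, \mathscr{E}_{Sc}g\rangle=0,
\end{align*}
where the successive equalities use $C_i=C_{n-i}^*V_{Sc}$, the commutation $C_{n-i}V_{Sc}=V_{Sc}C_{n-i}$, the fact that $V_{Sc}$ is an isometry, and finally $V_{Sc}^*\mathscr{E}_{Sc}=\mathscr{E}_{Sc}S_n^*$ together with $C_{n-i}^*\mathscr{E}_{Sc}=0$; the case $k=0$ is immediate. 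Minimality of $V_{Sc}$ gives $C_i\mathscr{E}_{Sc}=0$ for every $i$, and then $C_i^*V_{Sc}^{k}\mathscr{E}_{Sc}=V_{Sc}^{*}V_{Sc}^{k}C_{n-i}\mathscr{E}_{Sc}=0$ for all $k\geq 0$, so $C_i^*$ vanishes on the dense span $\bigvee_{k\geq0}V_{Sc}^{k}\mathscr{E}_{Sc}\mathcal{H}$, whence $C_i=0$. With this paragraph inserted your proof is complete; without it, the final step is a gap. Alternatively, you could drop the abstract lemma altogether and, as the paper does, run the block analysis on the conjugated Douglas tuple itself, which yields the stated form directly.
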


\begin{proof}
	We begin by highlighting the relationship between Douglas minimal isometric dilation and Sch\"affer minimal isometric dilation. By Corollary \ref{cor4.2}, we have $\mathscr{E}_D=\Xi_u^{\prime}\mathscr{E}_{Sc}$, and $$\Xi_u^{\prime}\begin{bmatrix}
		S_n & 0\\
		\textbf{D}_{S_n} & M_z 
	\end{bmatrix}^*=\begin{bmatrix}
		M_z & 0\\
		0 & D_n
	\end{bmatrix}^*\Xi_u^{\prime},$$ 
	where $\Xi_u^{\prime} \in \mathcal{B}(\mathcal{K}_{Sc},\mathcal{K}_D)$ is an unitary operator, and $$\mathcal{K}_{Sc}=
		\mathcal{H} \oplus
		H^2_{\mathcal{D}_{S_n}}(\mathbb{D})
	,\quad \text{and}\quad\mathcal{K}_D=
		H^2_{\mathcal{D}_{S_n^*}}(\mathbb{D}) \oplus
		\mathscr{P}_{S_n^*}.$$ By assumption, $\Phi_1,\dots,\Phi_{n-1}\in H^{\infty}(\mathcal{B}(\mathcal{D}_{S_n^*}))$ are the bounded analytic functions obtained in Theorem \ref{thm2.3} such that the tuple $(\gamma_1 M_{\Phi_1}, \dots, \gamma_{n-1} M_{\Phi_{n-1}})$ is $\Gamma_{n-1}$-contraction. If $(D_1,\dots,D_n)$ denotes the $\Gamma_n$-unitary canonically constructed from $(S_1,\dots,S_n)$, then, by invoking Theorem \ref{thm3.2}, we have the following relation 
	\begin{equation}\label{eq5.54}
		\mathscr{E}_D(S_1,\dots,S_{n-1},S_n)^*
		=(\hat{V}_1,\dots,\hat{V}_{n-1},\hat{V}_n)^*\mathscr{E}_D,
	\end{equation}
	where
	$$(\hat{V}_1,\dots,\hat{V}_{n-1},\hat{V}_n)=\left( \begin{bmatrix}
		M_{\Phi_1} & 0\\
		0 & D_1
	\end{bmatrix},\dots,\begin{bmatrix}
		M_{\Phi_{n-1}} & 0\\
		0 & D_{n-1}
	\end{bmatrix},\begin{bmatrix}
		M_z & 0\\
		0 & D_n 
	\end{bmatrix} \right).$$
	Substituting the expression for $\mathscr E_D$ from above into equation \eqref{eq5.54}, we get
	\begin{equation}\label{eq5.55}
		\mathscr{E}_{Sc}(S_1,\dots,S_{n-1},S_n)^*=(\hat{W}_1,\dots,\hat{W}_{n-1},\hat{W}_n)^*\mathscr{E}_{Sc},
	\end{equation}
	where
	\begin{equation*}\label{eq5.56}
		(\hat{W}_1,\dots,\hat{W}_{n-1},\hat{W}_n)=\Xi_u^{\prime*}(\hat{V}_1,\dots,\hat{V}_{n-1},\hat{V}_n)\Xi_u^{\prime}.
	\end{equation*}
	Now, Lemma \ref{lem2.1} immediately implies that the tuple $(\hat{W}_1,\dots,\hat{W}_{n-1},\hat{W}_n)$ is a $\Gamma_n$-isometry. Furthermore, 
	%Equation \eqref{eq5.55} tells us that $(V_1,\dots,V_{n-1},V_n)$ is Schaffer minimal isometric dilation. B
	by the uniqueness of the Sch\"affer minimal isometric dilation for the contraction $S_n$, it follows that $$\hat{W}_n=\begin{bmatrix}
		S_n & 0\\
		\textbf{D}_{S_n} & M_z 
	\end{bmatrix}.$$
	From equation \eqref{eq5.55}, we obtain $\mathscr{E}_{Sc}S_i^*=\hat{W}_i^*\mathscr{E}_{Sc}$ for all $i=1,\dots,n-1$. This intertwining relation implies that $\hat{W}_i$ is of the form $\begin{bmatrix}
		S_i & 0\\
		\hat{W}^{(i)}_{21} & \hat{W}^{(i)}_{22} 
	\end{bmatrix}$ for all $i=1,\dots,n-1$. Consequently, $$\left( \begin{bmatrix}
		S_1 & 0\\
		\hat{W}^{(1)}_{21} & \hat{W}^{(1)}_{22}
	\end{bmatrix},\dots,\begin{bmatrix}
		S_{n-1} & 0\\
		\hat{W}^{(n-1)}_{21} & \hat{W}^{(n-1)}_{22}
	\end{bmatrix},\begin{bmatrix}
		S_n & 0\\
		\textbf{D}_{S_n} & M_z\end{bmatrix} \right)$$ is a $\Gamma_n$-isometry. At this juncture, using the property of $\Gamma_n$-isometry we know that $\hat{W}_i=\hat{W}_{n-i}^*\hat{W}_n$ for all $i=1,\dots,n-1$. Therefore, 
	\begin{align}\label{eq5.47}
		\begin{bmatrix}
			S_i & 0\\
			\hat{W}^{(i)}_{21} & \hat{W}^{(i)}_{22} 
		\end{bmatrix}\nonumber &=\begin{bmatrix}
			S_{n-i} & 0\\
			\hat{W}^{(n-i)}_{21} & \hat{W}^{(n-i)}_{22}
		\end{bmatrix}^*\begin{bmatrix}
			S_n & 0\\
			\textbf{D}_{S_n} & M_z 
		\end{bmatrix}\\ \nonumber
		&=\begin{bmatrix}
			S_{n-i}^* & \hat{W}^{(n-i)*}_{21}\\
			0 & \hat{W}^{(n-i)*}_{22} 
		\end{bmatrix}\begin{bmatrix}
			S_n & 0\\
			\textbf{D}_{S_n} & M_z 
		\end{bmatrix}\\
		&=\begin{bmatrix}
			S_{n-i}^*S_n+\hat{W}^{(n-i)*}_{21}\textbf{D}_{S_n} & \hat{W}^{(n-i)*}_{21}M_z\\
			\hat{W}^{(n-i)*}_{22}\textbf{D}_{S_n} & \hat{W}^{(n-i)*}_{22}M_z
		\end{bmatrix},
	\end{align}
	for all $i=1,\dots,n-1$. Moreover, 
	\begin{align*}
		\begin{bmatrix}
			S_i & 0\\
			\hat{W}^{(i)}_{21} & \hat{W}^{(i)}_{22} 
		\end{bmatrix}\begin{bmatrix}
			S_n & 0\\
			\textbf{D}_{S_n} & M_z 
		\end{bmatrix}&=\begin{bmatrix}
			S_n & 0\\
			\textbf{D}_{S_n} & M_z 
		\end{bmatrix}\begin{bmatrix}
			S_i & 0\\
			\hat{W}^{(i)}_{21} & \hat{W}^{(i)}_{22} 
		\end{bmatrix},
	\end{align*}
	for all $i=1,\dots,n-1$. It is equivalent to
	\begin{equation}\label{eq5.48}
		\begin{bmatrix}
			S_iS_n & 0\\
			\hat{W}^{(i)}_{21}S_n+\hat{W}^{(i)}_{22}\textbf{D}_{S_n} & \hat{W}^{(i)}_{22}M_z
		\end{bmatrix}=\begin{bmatrix}
			S_nS_i & 0\\
			\textbf{D}_{S_n}S_i+M_z\hat{W}^{(i)}_{21} & M_z \hat{W}^{(i)}_{22}
		\end{bmatrix},
	\end{equation}
	for all $i=1,\dots,n-1$. By comparing the $(2,2)$-th entry in equation \eqref{eq5.48} and have $\hat{W}^{(i)}_{22}M_z=M_z \hat{W}^{(i)}_{22}$ for all $i=1,\dots,n-1$. Hence, each operator $\hat{W}^{(i)}_{22} \in \mathcal{B}(H^2_{\mathcal{D}_{S_n}}(\mathbb{D}))$ commutes with $M_z$. As a result, there exists a bounded analytic function $\Psi_i:\mathbb{D}\rightarrow \mathcal{B}(\mathcal{D}_{S_n})$ such that $\hat{W}^{(i)}_{22}=M_{\Psi_i},$ for all $i=1,\dots,n-1$. Again, equating $(2,2)$-th entry of equation \eqref{eq5.47} and then using $\hat{W}^{(i)}_{22}=M_{\Psi_i}$, we get  $$M_{\Psi_i}=M_{\Psi_{n-i}}^*M_z$$ for all $i=1,\dots,n-1$. This gives that each $\Psi_i$ is of the form $G_i+zG_{n-i}^*$ for some $G_i,G_{n-i}\in \mathcal{B}(\mathcal{D}_{S_n})$. Next, equating $(2,1)$-th entry of equation \eqref{eq5.47}, we obtain 
	\begin{equation*}
		\hat{W}^{(i)}_{21}=M_{\Psi_{n-i}}^*\textbf{D}_{S_n}=(G_{n-i}+M_zG_i^*)^*\textbf{D}_{S_n}=\textbf{G}_{n-i}^*\textbf{D}_{S_n}. 
	\end{equation*}
	Thus $W_i$ admits the representation 
	$$W_i=\begin{bmatrix}
		S_i & 0\\
		\textbf{G}_{n-i}^*\textbf{D}_{S_n} & M_{\Psi_i} 
	\end{bmatrix},$$
	for all $i=1,\dots n-1$. At this stage, we establish that the tuple $(G_1,\dots,G_{n-1})$ coincides with the fundamental operator tuple $(F_1,\dots,F_{n-1})$. To show this we equate the $(1,1)$-entry of equation \eqref{eq5.47}, which yields $S_i=S_{n-i}^*S_n+\hat{W}^{(n-i)*}_{21}\textbf{D}_{S_n}$ for all $i=1,\dots,n-1$. Applying Theorem \ref{thm1.2}, we then have 
	\begin{equation*}
		D_{S_n}F_iD_{S_n}=S_i-S_{n-i}^*S_n=\hat{W}^{(n-i)*}_{21}\textbf{D}_{S_n}=D_{S_n}G_iD_{S_n}.
	\end{equation*}
	Thus, $D_{S_n}\tilde{F}_iD_{S_n}=0$, where $\tilde{F}_i=F_i-G_i$. Indeed, 
	$$\langle \tilde{F}_iD_{S_n}h,D_{S_n}h^{\prime}\rangle=\langle D_{S_n}\tilde{F}_iD_{S_n}h,h^{\prime}\rangle=0,$$ for all $h,h^{\prime}\in \mathcal{H}$, which forces $\tilde{F}_i=0$. Hence $F_i=G_i$ for every $i=1,\dots,n-1$. This proves the fact that 
	\begin{equation}\label{eq5.61}
		\mathscr{E}_{Sc}(S_1,\dots,S_{n-1},S_n)^*=(\hat{W}_1,\dots,\hat{W}_{n-1},\hat{W}_n)^*\mathscr{E}_{Sc},
	\end{equation}
	where the tuple $(\hat{W}_1,\dots,\hat{W}_{n-1},\hat{W}_n)$ is given by  
	\begin{equation*}
		\left(\begin{bmatrix}
			S_1 & 0\\
			\textbf{F}_{n-1}^*\textbf{D}_{S_n} & M_{\Psi_1}
		\end{bmatrix},\dots,\begin{bmatrix}
			S_{n-1} & 0\\
			\textbf{F}_1^*\textbf{D}_{S_n} & M_{\Psi_{n-1}}
		\end{bmatrix},\begin{bmatrix}
			S_n & 0\\
			\textbf{D}_{S_n} & M_z 
		\end{bmatrix}\right).
	\end{equation*} 
	Moreover, if we assume that $\mathcal{H}_{Sc}$ is the model space given by $\mathcal{H}_{Sc}=\operatorname{Ran}\mathscr{E}_{Sc}$. Then equation \eqref{eq5.61} implies that $(S_1,\dots,S_n)$ is jointly equivalent to 
	$$	P_{\mathcal{H}_{Sc}}\left(\begin{bmatrix}
		S_1 & 0\\
		\textbf{F}_{n-1}^*\textbf{D}_{S_n} & M_{\Psi_1}
	\end{bmatrix},\dots,\begin{bmatrix}
		S_{n-1} & 0\\
		\textbf{F}_1^*\textbf{D}_{S_n} & M_{\Psi_{n-1}}
	\end{bmatrix},\begin{bmatrix}
		S_n & 0\\
		\textbf{D}_{S_n} & M_z 
	\end{bmatrix}\right)\Bigg|_{\mathcal{H}_{Sc}}.$$ 
	This completes the proof.
\end{proof}

\begin{rem}
	Let $(S_1,\dots,S_n)$ be a $\Gamma_n$-contraction acting on a Hilbert space $\mathcal{H}$, and let $(F_1,\dots,F_{n-1})$ and $(E_1,\dots,E_{n-1})$ be the fundamental operator tuples of $(S_1,\dots,S_n)$ and $(S_1^*,\dots,S_n^*)$  respectively. Suppose that $(E_1,\dots,E_{n-1})$ satisfies the commutativity condition \eqref{eq1.1}. Let $\Phi_1,\dots,\Phi_{n-1}\in H^{\infty}(\mathcal{B}(\mathcal{D}_{S_n^*}))$ be obtained as in Theorem \ref{thm2.3} such that the tuple $(\gamma_1 M_{\Phi_1}, \dots, \gamma_{n-1} M_{\Phi_{n-1}})$ is a $\Gamma_{n-1}$-contraction. If the tuple $(D_1,\dots,D_n)$ be the $\Gamma_n$-unitary canonically constructed using $(S_1,\dots,S_n)$. Then in view of equation \eqref{eq5.56}, the operator tuple $$\left( \begin{bmatrix}
		M_{\Phi_1} & 0\\
		0 & D_1
	\end{bmatrix},\dots,\begin{bmatrix}
		M_{\Phi_{n-1}} & 0\\
		0 & D_{n-1}
	\end{bmatrix},\begin{bmatrix}
		M_z & 0\\
		0 & D_n 
	\end{bmatrix} \right)$$
	is unitarily equivalent to  
	$$\left(\begin{bmatrix}
		S_1 & 0\\
		\textbf{F}_{n-1}^*\textbf{D}_{S_n} & M_{\Psi_1}
	\end{bmatrix},\dots,\begin{bmatrix}
		S_{n-1} & 0\\
		\textbf{F}_1^*\textbf{D}_{S_n} & M_{\Psi_{n-1}}
	\end{bmatrix},\begin{bmatrix}
		S_n & 0\\
		\textbf{D}_{S_n} & M_z 
	\end{bmatrix}\right).$$
\end{rem}
%\medskip \textit{Acknowledgment}. 
\textsl{Acknowledgements:}
	The work of the first author is supported through SERB-CRG (CRG/2022/003058).
%	 and the second named author thankfully acknowledges the financial support provided by  the research project of SERB with
%	ANRF File Number: CRG/2022/003058, by the Science and Engineering Research Board (SERB),
%	Department of Science and Technology (DST), Government of India.
\vskip-1cm

	%	\vspace{.5cm}
	%	
	%	\vspace{.5cm}
	%	\noindent (Keshari) \sc{An OCC of Homi Bhabha National Institute, School of Mathematical Sciences, National Institute
		%		of Science Education and Research, Bhubaneswar, Post-Jatni, Khurda, 752050, India}\\
	%	{E-mail address:} {\bf dinesh@niser.ac.in}
\end{document}